\newcommand     {\comment}[1]   {}
\newcommand{\mute}[2] {}
\newcommand     {\printname}[1] {}
\newcommand{\labell}[1] {\label{#1}}
\numberwithin{equation}{section}
\newtheorem {Theorem}                   {Theorem}
\newtheorem*{Theorem*}                   {Theorem}
\newtheorem {refTheorem}[equation]      {Theorem}
\newtheorem {Lemma}[equation]           {Lemma}
\newtheorem {Corollary} [equation]      {Corollary}
\newtheorem* {Corollary*}                {Corollary}
\newtheorem {Proposition} [equation]    {Proposition}
\newtheorem* {Lemma*}                    {Lemma}
\theoremstyle{definition}
\newtheorem{Definition}[equation]{Definition}
\newtheorem*{Definition*}{Definition}
\theoremstyle{remark}
\newtheorem{Remark}[equation]{Remark}
\newtheorem*{Remark*}{Remark}
\newtheorem{Example}[equation]{Example}
\long\def\symbolfootnote[#1]#2{\begingroup%
\def\thefootnote{\fnsymbol{footnote}}\footnote[#1]{#2}\endgroup} 
\def    \cS             {{\mathcal S}}
\def \calT {{\mathcal T}}
\def \fU {{\mathfrak U}}
\def \fW {{\mathfrak W}}
\def \cA {{\mathcal A}}
\def \ol 	{\overline}
\def \ssminus	{\smallsetminus}
\def \eps 	{\epsilon}
\def \PhiT      {\Phi \mbox{-} T}
\def \del	{{\partial}}
\def    \inv    {^{-1}}
\def    \image  {\operatorname{image}}
\def    \Otb  {{\widetilde{\Omega}_{\operatorname{basic}}}}
\def    \R	{{\mathbb R}}
\def    \N	{{\mathbb N}}
\def    \C	{{\mathbb C}}
\def    \Z       {{\mathbb Z}}
\def    \t	{{\mathfrak t}}
\def    \h	{{\mathfrak h}}
\def    \ft	{{\mathfrak t}}
\def    \fh	{{\mathfrak h}}
\def	\calH	{{\mathcal H}}
\def    \fH	{{\calH}}
\def    \SU      {{\operatorname{SU}}}
\def    \SO      {{\operatorname{SO}}}
\def	\id 	{{\operatorname{id}}}
\def	\Aut 	{{\operatorname{Aut}}}
\def    \tPhi   {{\widetilde{\Phi}}}
\def    \constant  {\operatorname{constant}}
\def    \proj   {{\text{proj}}}
\def    \Hom      {{\operatorname{Hom}}}
\def    \DH     {Duistermaat-Heckman\ }
\def    \half   {{\frac{1}{2}}}
\def	\exc 	{{\operatorname{exc}}}
\def	\ker 	{{\operatorname{ker}}}
\def	\exp	{{\operatorname{exp}}}
\def	\d	{{\operatorname{d}}}
\def	\tall   {{\operatorname{tall}}}
\def	\short  {{\operatorname{short}}}
\def	\Phibar	{{\ol{\Phi}}}
\def    \omegabar       {{\widetilde{\omega}}}
\def    \bs     {\boldsymbol}
\def    \g      {{\mathfrak g}}
\begin{document}

\title[Classification of tall complexity one spaces]
{Classification of Hamiltonian torus actions with two dimensional 
quotients}

\author[Yael Karshon]{Yael Karshon}
\address{University of Toronto, 40 St.\ George Street,
Toronto Ontario M5S 2E4, Canada}
\email{karshon@math.toronto.edu}

\author[Susan Tolman]{Susan Tolman}
\address{Department of Mathematics, University of Illinois at Urbana-Champaign,
Urbana, IL 61801}
\email{stolman@math.uiuc.edu}

\thanks{\emph{2010 Mathematics Subject Classification}.
Primary 53D20, 53D35, 37J15.  Secondary 57S15, 52B20, 14M25.}

\thanks{
Yael Karshon is partially supported by an NSERC Discovery Grant.
Susan Tolman is partially supported by NSF Grant DMS \#07-07122.
}

\begin{abstract}
We construct all possible Hamiltonian torus actions 
for which all the non-empty reduced spaces are two dimensional 
(and not single points)
and the manifold is connected and compact, or, more generally, 
the moment map is proper as a map to a convex set.
This construction completes the classification of 
tall complexity one spaces.
\end{abstract}

\maketitle

\tableofcontents

{

\section{Introduction}
\labell{sec:intro}

Fix a torus $T \cong (S^1)^{\dim T}$
with Lie algebra $\t$ and dual space $\t^*$.
Let $T$ act on a symplectic manifold $(M,\omega)$ 
with moment map $\Phi \colon M \to \ft^*$, so that
\begin{equation} \labell{moment}
\iota(\xi_M) \omega = - d \left< \Phi,\xi \right> 
\quad \text{ for all } \xi \in \t ,
\end{equation}
where $\xi_M$ is the vector field on $M$ induced by $\xi$.
Assume that the $T$-action is faithful (effective)
on each connected component of $M$.
We call  $(M,\omega,\Phi)$
a \textbf{Hamiltonian $\mathbf T$-manifold}.
An \textbf{isomorphism} between two Hamiltonian $T$-manifolds is
an equivariant symplectomorphism that respects the moment maps.
The \textbf{complexity} of $(M,\omega,\Phi)$ is
the difference  $\half \dim M - \dim T$; it  is half the dimension
of the reduced space  $\Phi\inv(\alpha)/T$
at a regular value $\alpha$ in $\Phi(M)$.
Assume that $(M,\omega,\Phi)$ has complexity one;
it is \textbf{tall}
if every reduced space $\Phi\inv(\alpha)/T$ is a two dimensional
topological manifold.
If $M$ is connected,
$\calT$ is a convex open subset of $\t^*$ containing $\Phi(M)$,
and the map $\Phi \colon M \to \calT$ is proper,
then we call $(M,\omega,\Phi,\calT)$ a
\textbf{complexity one space}. 
For example, if $M$ is compact and connected
then $(M,\omega,\Phi,\ft^*)$ is a complexity one space, 
which it is tall exactly if the preimage of each vertex 
of the moment polytope $\Phi(M)$ is
a fixed surface; see Corollary~\ref{dichotomy}.

In this paper we complete our classification
of tall complexity one spaces of arbitrary dimension.
More precisely, in a previous paper \cite{globun} we defined an invariant of
a tall complexity one space called the \emph{painting}, which subsumes
two other invariants: the \emph{genus} and the \emph{skeleton}
(see page~\pageref{page:genus}).
We proved that these invariants, together with the \DH measure, 
determine the tall complexity one space up to isomorphism.
In this paper we give a necessary and sufficient condition
for a measure and a painting to arise from a tall complexity one space.

Symplectic toric manifolds (see page~\pageref{page:toric})
serve as extremely important examples in symplectic geometry, 
illuminating many different aspects of the field. 
We hope that the existence theorems of this paper
will enable complexity one spaces to serve a similar role.}
These spaces are more flexible than symplectic toric manifolds.
For example, in a paper-in-progress,
the second author uses the methods of~\cite{To}
to show that many complexity one spaces 
do not admit equivariant K\"ahler structures.
Additionally, she constructs an infinite family
of symplectic forms in a fixed cohomology class 
which are equivariantly deformation equivalent
but are not equivariantly isotopic.

Symplectic toric manifolds 
are classified by their moment images \cite{delzant}; 
see \cite{KL} for the non-compact case.
Compact connected
nonabelian complexity zero actions are determined by their moment image
and the principal isotropy subgroup;
this is the Delzant conjecture, recently proved in \cite{knop:delzant}
and \cite{losev}, following earlier work in \cite{iglesias,woodward}.

The simplest complexity one spaces, compact connected symplectic 2-manifolds
with no group action, are classified by their genus and total area~\cite{moser}.
Four dimensional compact connected complexity one spaces are classified
in~\cite{karshon:periodic} (also see~\cite{ah-ha,audin:paper,audin:book});
see Example~\ref{S1}.
Similar techniques apply to complexity one nonabelian group actions
on six manifolds~\cite{river} and to two-torus actions on five dimensional
K-contact manifolds~\cite{nozawa}.
From the algebraic geometric point of view, complexity one actions
(of possibly nonabelian groups) have been studied in
\cite[Chapter IV]{KKMS}, 
\cite{timashev1,timashev2}, and
\cite{altmann-hausen,altmann-hausen-suss,altmann-peterson,vollmert}.
Moreover, a complexity one symplectic torus action 
on a compact symplectic manifold
is Hamiltonian if and only if it has a fixed point~\cite{kim}.

Work on Hamiltonian circle actions on six manifolds, 
which have complexity two, appeared in 
\cite{huili1,huili2,mcduff:six,tolman:petrie,gonzalez,LiTol}.
Finally, classification in arbitrary complexity is feasible 
for ``centered spaces" 
(\cite[section 1]{delzant}, \cite[\S 2]{kt:gromov}, \cite{KZ}).

We begin by recalling the invariants of complexity one spaces.

Let $(M,\omega,\Phi)$ be a $2n$ dimensional 
Hamiltonian $T$-manifold.
Recall that the Liouville measure on $M$ is given by integrating
the volume form $\omega^n / n!$ with respect to the symplectic orientation
and that the \textbf{\DH measure} is the push-forward of the Liouville measure
by the moment map.
The isotropy representation at $x$ is the linear representation
of the stabilizer $\{\lambda \in T \mid \lambda \cdot x = x\}$
on the tangent space $T_xM$.
Points in the same orbit have the same stabilizer,
and their isotropy representations are linearly symplectically isomorphic;
this isomorphism class is the \textbf{isotropy representation} of the orbit.

Now assume that $(M,\omega,\Phi)$ is a tall complexity one
Hamiltonian $T$-manifold.
An orbit is \textbf{exceptional} if every nearby orbit in the same
moment fiber $\Phi\inv(\alpha)$ has a strictly smaller stabilizer.  
Let $M_\exc$ denote the set of exceptional orbits in $M/T$,
and let $M'_\exc$ denote the set of exceptional orbits of another
tall complexity one space. An \textbf{isomorphism} from $M_\exc$
to $M'_\exc$ is a homeomorphism that respects the moment maps
and sends each orbit to an orbit with the same (stabilizer and)
isotropy representation.
The \textbf{skeleton} 
\labell{page:skeleton}
of $M$ is the set $M_\exc \subset M/T$ with its
induced topology, with each orbit labeled by its isotropy representation,
and with the map $\Phibar \colon M_\exc \to \t^*$
that is induced from the moment map.

The next proposition is a slight modification of Proposition 2.2
of \cite{globun}; see Remark~\ref{rmk:errors}.

\begin{Proposition} \labell{trivialize M mod T}
Let $(M,\omega,\Phi,\calT)$ be a tall complexity one space.
There exists a closed oriented surface $\Sigma$
and a map $f \colon M/T \to \Sigma$ so that
$$(\Phibar,f) \colon M/T \to (\image \Phi) \times \Sigma$$
is a homeomorphism  and
the restriction
$f \colon \Phi\inv(\alpha)/T \to \Sigma$
is orientation preserving for each $\alpha \in \image \Phi$.
Here, $\Phibar$ is induced by the moment map.
Given two such maps $f$ and $f'$, there exists an orientation preserving
homeomorphism $\xi \colon \Sigma' \to \Sigma$ so that $f$ is homotopic
to $\xi \circ f'$ through maps which induce homeomorphisms
$M/T \to (\image \Phi) \times \Sigma$.
\end{Proposition}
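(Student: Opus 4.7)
My plan is to first show that $\Phibar \colon M/T \to \image \Phi$ is a locally trivial topological fiber bundle whose fibers are closed connected oriented surfaces, and then to use contractibility of the base to produce a global trivialization.

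\emph{Step 1 (fiber bundle structure).}  Each fiber $\Phibar\inv(\alpha) = \Phi\inv(\alpha)/T$ is a compact (by properness of $\Phi$) connected (by the connectedness of moment fibers for Hamiltonian torus actions, extended to the proper case) topological $2$-manifold (by tallness). Each such reduced space is orientable: over regular values it inherits a symplectic form, hence a canonical orientation, and at a singular value the orientation extends continuously from nearby regular values via the local normal form theorem for Hamiltonian $T$-actions. Using that local normal form together with a $T$-invariant Riemannian metric to lift the coordinate directions of $\Phi$, one patches together product trivializations of $\Phibar$ over small convex neighborhoods in $\image \Phi$. This makes $\Phibar$ into a topological fiber bundle with structure group contained in the orientation-preserving homeomorphisms of a fixed closed oriented surface $\Sigma$ of the appropriate genus (the genus is locally constant by the local model, hence constant since $\image \Phi$ is connected).

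\emph{Step 2 (global trivialization).}  The image $\image \Phi$ is convex (by the convexity theorem for proper moment maps), hence contractible and paracompact. A locally trivial topological fiber bundle with closed oriented surface fibers over a contractible paracompact base admits a global trivialization that preserves the fiber orientations. The second factor of such a trivialization gives the desired $f$, so that $(\Phibar, f) \colon M/T \to \image \Phi \times \Sigma$ is a homeomorphism whose restriction to each reduced space is an orientation-preserving homeomorphism onto $\Sigma$.

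\emph{Step 3 (uniqueness up to homotopy).}  Given two such pairs $(\Sigma, f)$ and $(\Sigma', f')$, the composition
\[
(\Phibar, f') \circ (\Phibar, f)\inv \colon \image \Phi \times \Sigma \to \image \Phi \times \Sigma'
\]
preserves the first coordinate and restricts over each $\alpha$ to an orientation-preserving homeomorphism $\psi_\alpha \colon \Sigma \to \Sigma'$; equivalently, $f'([m]) = \psi_{\Phibar(m)}(f([m]))$. Fix a basepoint $\alpha_0 \in \image \Phi$, set $\xi = \psi_{\alpha_0}\inv \colon \Sigma' \to \Sigma$, and, using convexity of $\image \Phi$, define
\[
f_t([m]) = \psi\inv_{(1-t)\Phibar(m) + t\alpha_0}\bigl(f'([m])\bigr).
\]
Then $f_0 = f$, $f_1 = \xi \circ f'$, and for each $t$ the map $(\Phibar, f_t)$ is a homeomorphism $M/T \to \image \Phi \times \Sigma$, providing the required homotopy, with $\xi$ orientation-preserving as an inverse of an orientation-preserving homeomorphism.

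The main obstacle will be Step 1: verifying local triviality of $\Phibar$ near singular moment values requires carefully combining the local normal forms for Hamiltonian $T$-actions with the tallness hypothesis to obtain compatible local product descriptions of $M/T$ and checking that the resulting transition data preserves fiber orientations. Once this is in place, Steps 2 and 3 are essentially formal consequences of the convexity of the moment image.
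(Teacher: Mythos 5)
The paper does not actually reproduce a proof of this proposition; it defers to \cite[Proposition~2.2]{globun} (and Remark~\ref{rmk:errors} notes that the proof there already yields maps satisfying the added orientation condition). So I am judging your proposal as a self-contained argument rather than against a proof printed here.

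Your overall strategy — establish that $\Phibar$ is a locally trivial bundle with closed oriented surface fibers, globalize using contractibility of the convex image, and prove uniqueness via an explicit straight-line homotopy — is the right one, and Steps~2 and~3 are correct as written (Step~3 in particular is clean: joint continuity of $(\alpha,\sigma')\mapsto\psi_\alpha^{-1}(\sigma')$ follows from $(\Phibar,f')\circ(\Phibar,f)^{-1}$ being a homeomorphism, and the endpoints and orientation statements check out). The genuine gap is in Step~1, and you correctly identify it as the main obstacle but do not close it. Two things are missing. First, the assertion that a $T$-invariant Riemannian metric ``lifts the coordinate directions of $\Phi$'' to give local product charts fails at exceptional orbits and at orbits with positive-dimensional stabilizer: there $d\Phi$ is not surjective, the reduced spaces have quotient singularities, and a gradient-type lift does not produce a continuous trivialization across the singular locus. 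What is actually needed is the model-level statement (Lemma~\ref{lemma:Y} of the present paper, quoting \cite[Lemma~6.2]{locun}) that for any tall complexity one model $Y$ one has a homeomorphism $Y/T \to (\image\Phi_Y)\times\C$ over the moment image carrying the symplectic orientations of reduced spaces to the complex orientation of $\C$. Second, even granting such local product charts near each orbit, passing to local triviality of $\Phibar$ over a neighbourhood of $\alpha$ in $\image\Phi$ requires covering the compact fiber $\Phibar^{-1}(\alpha)$ by finitely many such charts and verifying that the resulting transition data varies continuously in the base parameter and preserves fiber orientations; this patching is asserted but not argued. Until those two points are supplied, Step~1 — and hence the existence half of the proposition — is not established.
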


The \textbf{genus} 
\labell{page:genus}
of a tall complexity one space $(M,\omega,\Phi,\calT)$
is the genus of the surface $\Phi\inv(\alpha)/T$
for $\alpha \in \image \Phi$. 
By Proposition~\ref{trivialize M mod T},
it is well defined.
A \textbf{painting} is a map $f$ from $M_\exc$ to a closed oriented surface
$\Sigma$ such that
the map
$$(\Phibar,f) \colon M_\exc \to \calT \times \Sigma$$
is one-to-one, where $M_\exc$ is the set of exceptional orbits.
Two paintings, $f \colon M_\exc \to \Sigma$ and
$f' \colon M_\exc' \to \Sigma'$,
are \textbf{equivalent} if there exist 
an isomorphism $i \colon M'_\exc \to M_\exc$ 
and an orientation preserving homeomorphism
$\xi \colon \Sigma' \to \Sigma$ such that 
$f \circ i \colon M'_\exc \to \Sigma$
and $\xi \circ f' \colon M'_\exc \to \Sigma$ are homotopic
\emph{through paintings}.
Proposition \ref{trivialize M mod T} implies that
there is a well-defined equivalence class of paintings associated to
every tall complexity one space;
just  restrict $f$ to $M_\exc$.

\begin{Remark}\labell{rmk:1to1}
The notion of a painting
is simplest when $\Phibar \colon M_\exc \to \t^*$ is one-to-one,
as in Examples~\ref{6d example} and~\ref{8d example}.
In this case,
every map from $M_\exc$ to a closed oriented surface $\Sigma$ is a painting,
and two paintings $f \colon M_\exc \to \Sigma$ and
$f' \colon M_\exc \to \Sigma'$ 
are equivalent exactly if 
there exists
an orientation preserving homeomorphism $\xi \colon \Sigma' \to \Sigma$
such that $f$ and $\xi \circ f'$ are homotopic.
\end{Remark}

In the next two examples, we construct complexity one spaces
out of \textbf{symplectic toric manifolds}, i.e.,
\labell{page:toric}
compact connected complexity zero Hamiltonian $(S^1)^n$-manifolds.
The moment image of a symplectic toric manifold
is a Delzant polytope; see Remark~\ref{delzant polytope}.
In fact, every Delzant polytope
occurs as the moment image of a symplectic toric manifold, 
and this manifold
is unique up to equivariant symplectomorphism \cite{delzant}.

\begin{Example} \labell{toric}
Let $(M,\omega,\psi)$ be a symplectic toric manifold with moment image
$\Delta = \psi(M) \subset \R^n$.  
The moment map induces a homeomorphism $\ol{\psi}
\colon M/(S^1)^n \to \Delta$.  Moreover, given $x \in \Delta$, let $F_x$ be
the smallest face of $\Delta$ containing $x$.
The stabilizer of the preimage $\psi\inv(x)$ is the
connected subgroup $H_x \subset (S^1)^n$ with Lie algebra 
$$\fh_x = \{ \xi \in \R^n \mid \langle
     \xi, y - z \rangle = 0  \mbox{ for all } y, z \in F_x \}.$$

Let $\Phi \colon M \to \R^{n-1}$ be the composition
of the moment map  $\psi$ with the projection
$\pi (x_1,\ldots,x_n) = (x_1,\ldots,x_{n-1})$.
Then $(M,\omega,\Phi,\R^{n-1})$ is a complexity one space
for the subtorus $(S^1)^{n-1} \subset (S^1)^n$.
It is tall exactly if
\begin{equation} \labell{ceiling floor}
\Delta_{\text{ceiling}} \cap \Delta_{\text{floor}} = \emptyset, 
\end{equation}
where
\begin{align*}
\Delta_{\text{ceiling}} = & \left\{ x \in \Delta \ \big| \ 
x_n \geq x_n' \text{ for all } x' \in  \pi\inv(\pi(x)) 
\right\}  
\quad \text { and } \\
\Delta_{\text{floor}} = & \left\{ x \in \Delta \ \big| \  
x_n \leq x_n' \text{ for all } x' \in  \pi\inv(\pi(x))
\right\}.
\end{align*}
Assume that~\eqref{ceiling floor} holds.

For $x \in \Delta$ such that $\pi(x)$ is in the interior of $\pi(\Delta)$,
the preimage ${\psi}\inv(x)$ is exceptional exactly if
its $(S^1)^{n-1}$ stabilizer, $H_x \cap (S^1)^{n-1}$, is non-trivial.
(This always occurs\footnote
{
More generally, $H_x \cap (S^1)^{n-1}$ is trivial exactly if
$\pi(\Z^n \cap TF_x) = \Z^{n-1}$, where $TF_x = \fh_x^\circ =
\{ \lambda (y - z) \mid \lambda \in \R \mbox{ and } y, z \in F_x \}.$
To see this, note that $\pi(\Z^n \cap TF_x) = \Z^{n-1}$ exactly if every 
character of $(S^1)^{n-1}$ is the restriction of a character of $(S^1)^n$
that vanishes on $H_x$.  If $H_x \cap (S^1)^{n-1}$ is not trivial, then
there are characters of $(S^1)^{n-1}$ that don't vanish on 
$H_x \cap (S^1)^{n-1}$, and these can't be 
the restrictions of characters 
that vanish on $H_x$.
On the other hand, if $H_x \cap (S^1)^{n-1} = \{1\}$, then either 
$H_x = \{1\}$ or $(S^1)^n \simeq H_x \times (S^1)^{n-1}$. In either case,
every character of $(S^1)^{n-1}$ is the restriction of a character of
$(S^1)^n$ that vanishes on $H_x$.
}
if $\dim F_x < n -1$.)
The skeleton $M_\exc$ is
the closure of the set of such orbits.  The genus of $(M,\omega,\Phi)$ is zero.
The equivalence class of paintings associated to $M$
includes the paintings that are constant on each component of $M_\exc$.
Finally, the \DH measure is the push-forward to $\R^{n-1}$ 
of Lebesgue measure on $\Delta$.
\end{Example}

\begin{Example} \labell{associated}
Let $P \to \Sigma$ be a principal $(S^1)^n$ bundle over a closed oriented
surface $\Sigma$ of genus $g$ with first Chern class
$c_1(P) \in H^2(\Sigma,\Z^n)$, and
let $N$ be a symplectic toric manifold with moment  map
$\psi \colon N \to  \R^n$.
There exists a $T$-invariant symplectic
form $\omega$ on $M := P \times_T N$ whose restriction to the fibers is the
symplectic form on $N$;
the moment map $\Phi \colon M \to \R^n$
is given by $\Phi([p,n]) = \psi(n)$.  See \cite{GLS}.
In this case, $(M,\omega,\Phi,\R^n)$ is a
tall complexity space of genus $g$,  $M_\exc = \emptyset$,
and the \DH measure of $(M,\omega,\Phi)$ is Lebesgue measure on $\Delta$
times an  affine function with slope $c_1(P)$.
\end{Example}

\begin{Example} \labell{S1}
In \cite{karshon:periodic}, Karshon showed that a Hamiltonian circle action on a compact, connected  symplectic four-manifold $M$
is determined up to isomorphism by the following labelled graph:
The vertices correspond to connected components of the fixed point set;
each vertex is labelled by its moment map value, and -- if the 
corresponding component is a two-dimensional 
fixed surface -- the genus and area of that surface.
The edges correspond to two spheres in $M$ that the circle rotates
at speed $k > 1$; such an edge is labelled by the integer $k$.

The space $M$ is tall exactly if the minimum and maximum of the moment map
are attained along two dimensional fixed surfaces; 
see Corollary \ref{dichotomy}.
It is fairly straightforward to check that in this case the invariants 
that we describe in this paper determine, and are determined by,
the labelled graph described above.
In particular, the moment map identifies each component of the skeleton 
with an interval, and so every painting is trivial, i.e., 
equivalent to a painting that is locally constant.
\end{Example}

By  Theorem~1 of~\cite{globun}, the invariants that we have defined
completely determine the tall complexity one space:

\begin{refTheorem}[Global uniqueness] \labell{main-theorem-globun}
Let $(M,\omega,\Phi,\calT)$ and $(M',\omega',\Phi',\calT)$
be tall complexity one spaces.
They are isomorphic if and only if they have the same
moment image\footnote{
Since the moment image is the support of the \DH measure,
we could omit the condition that the spaces have the same moment image.
Nevertheless, we will sometimes include this condition for emphasis.
} and
\DH measure, the same genus, and equivalent paintings.
\end{refTheorem}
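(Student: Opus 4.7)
The plan is to use the four invariants to construct an equivariant symplectomorphism $F \colon M' \to M$ that intertwines the moment maps. From the equivalence of paintings one extracts an isomorphism $i \colon M'_\exc \to M_\exc$ respecting moment maps, stabilizers, and isotropy representations, and---using that the genera agree---trivializing maps $f,f'$ into a common oriented surface $\Sigma$ for which $f \circ i$ and $f'$ are homotopic through paintings. Via Proposition~\ref{trivialize M mod T}, these assemble into a continuous moment-map-preserving homeomorphism $\varphi \colon M'/T \to M/T$ extending $i$.

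Next I would lift $\varphi$ to an equivariant diffeomorphism. On the open dense principal stratum the orbit map is a principal $T/H$-bundle with $H$ the principal isotropy, and the homotopy type of the base is controlled by the genus together with $\image \Phi$, so a lift of $\varphi$ over that stratum exists. Around each exceptional orbit the equivariant symplectic slice theorem provides a local model determined up to equivariant symplectomorphism by the isotropy representation, which $i$ preserves; an equivariant partition-of-unity argument then glues the local lifts with the principal-stratum lift to produce a smooth $T$-equivariant diffeomorphism $F \colon M' \to M$ satisfying $\Phi \circ F = \Phi'$.

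Finally I would make $F$ symplectic. The pullback $\tilde\omega := F^*\omega$ is a $T$-invariant symplectic form on $M'$ with moment map $\Phi'$, and because the two \DH measures agree, for each regular value $\alpha$ the symplectic areas of the reduced surface $(\Phi')\inv(\alpha)/T$ computed with $\omega'$ and with $\tilde\omega$ coincide. Hence the reduced forms $\omega'_{\red}(\alpha)$ and $\tilde\omega_{\red}(\alpha)$ represent the same cohomology class on each reduced 2-manifold. A fiberwise equivariant Moser argument then produces a $T$-equivariant isotopy $\psi_t$ moving $\omega'$ to $\tilde\omega$ through $T$-invariant symplectic forms with moment map $\Phi'$, and $F \circ \psi_1\inv$ is the required isomorphism.

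The main obstacle is this last step: executing Moser equivariantly across the exceptional locus---where the reduced spaces remain topological $2$-manifolds but the orbit-type stratification changes---requires a stratified primitive construction for $\tilde\omega - \omega'$ that is both $T$-basic and annihilated by contraction with $\Phi'$-fibers, so that the resulting vector field preserves $\Phi'$. Carrying this construction out uniformly over all of $\image\Phi$, and in particular verifying that the reduced cohomology equality integrates to a global primitive in a way compatible with the skeleton structure, is the heart of the argument.
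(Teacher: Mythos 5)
This theorem is not proved in the present paper; it is quoted verbatim from Theorem~1 of \cite{globun}, and only the ingredients that enter the uniqueness argument are referenced (in \S\ref{sec:lifting} and \S\ref{sec:gluing}, through Lemmas 3.5, 4.10 and 4.11 of \cite{locun} and Proposition 20.1 of \cite{globun}). Against those ingredients, your outline has the right shape (homeomorphism of quotients, lift to an equivariant diffeomorphism, fix the form by Moser) but leaves two genuine cohomological obstructions unaddressed. First, to lift $\varphi \colon M'/T \to M/T$ to an equivariant diffeomorphism, local lifts do exist, but gluing them is \emph{not} a partition-of-unity argument: the discrepancy on overlaps is a $\check H^2$-cocycle valued in the sheaf of smooth invariant $T$-valued functions, and $T$ is a group, not a module, so you cannot average. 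What actually kills this obstruction in \cite{locun} (Lemma~4.11, quoted here inside Lemma~\ref{global lift}) is the injectivity of the restriction $H^2(M/T;\Z) \to H^2(\Phi^{-1}(y)/T;\Z)$ (Proposition~\ref{prop121}), together with the equality of \DH measures, which is also what guarantees the two principal strata carry isomorphic $T$-bundles in the first place (the first Chern class is encoded in the slope of the \DH function). Your appeal to ``homotopy type controlled by genus and $\image\Phi$'' does not substitute for this. There is also a smaller but real issue that $\varphi$ must be a $\Phi$-diffeomorphism in the stratified sense (Definition~\ref{Phi diffeo}), not just a homeomorphism, before any lifting statement applies; producing one that extends $i$ from a homotopy through paintings is precisely what Proposition~\ref{technical} packages.

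Second, for the Moser step you correctly locate the difficulty, but the fiberwise area computation is strictly weaker than what is needed. Matching reduced cohomology classes on each $\Phi'^{-1}(\alpha)/T$ does not by itself produce a global \emph{basic} primitive for $F^*\omega - \omega'$ that is compatible with the moment map; there could be a monodromy or gluing obstruction as $\alpha$ varies and as you cross the exceptional locus. The statement that such a primitive exists is Lemma~3.5 of \cite{locun} (quoted here in the proof of Proposition~\ref{glue forms given DH}), and it again hinges on the same $H^2$-injectivity, not on the \DH measure alone. So the ``heart of the argument'' you identify is exactly the part carried by the cohomological vanishing machinery (of the type developed in \S\ref{sec:topology}), and without it the proposal does not close.
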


\begin{Remark} \labell{rmk:errors}
In our definition of ``equivalent paintings",
we require the homeomorphism $\xi$ to be orientation preserving.
This requirement, which is necessary for Theorem~\ref{main-theorem-globun}
to be true, was mistakenly omitted from~\cite[p.29]{globun}.
Similarly, Definition \ref{abstract painting}
of the current paper
is the correction to Definition~18.1 of~\cite{globun}.
Finally, both \cite[Proposition~2.2]{globun} 
and its smooth analogue, \cite[Lemma 18.4]{globun}, should state
that $f|_{\Phi\inv(\alpha)/T}$ and $\xi$ are orientation preserving.
The maps that are obtained in the proofs of these propositions
in \cite{globun} do satisfy this additional requirement.
\end{Remark}

Before stating our most general existence theorem,
Theorem~\ref{existence}, we give two existence theorems --
Theorems~\ref{data from manifold} and~\ref{thm:existence} -- that are
easier to state 
and simpler to apply but are sufficient for 
constructing interesting examples.
These two theorems are actually consequences 
of the most general theorem; 
all three are proved in Section~\ref{sec:proof};
cf.\ Remark~\ref{rk:special}.

\subsection*{The simplest existence theorem}

We now state our first existence theorem,
which shows that --
given a tall complexity one space  --
we can find another tall complexity one space 
with an isomorphic skeleton (and the same \DH measure)
but a different genus and painting.

\begin{Theorem} \labell{data from manifold}
Let $(M,\omega,\Phi,\calT)$ be a tall complexity one space.
Let $\Sigma$ be a closed oriented surface, and
let $f \colon M_\exc \to \Sigma$ be any painting.
Then there exists a tall complexity one space $(M',\omega',\Phi',\calT)$
with the same moment image and \DH measure whose painting is equivalent to $f$.
\end{Theorem}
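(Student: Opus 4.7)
The plan is to build $M'$ by an equivariant cut-and-paste construction on $M$: keep a $T$-invariant neighborhood of the exceptional orbits (so that the skeleton is preserved) and replace the complementary ``free'' part by a new principal $T$-bundle model over a suitable open subset of $\calT\times\Sigma$, so that $M'/T$ realizes the target surface $\Sigma$ and the target painting $f$. When $M_\exc=\emptyset$, this collapses to Example~\ref{associated}.

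First, I would apply Proposition~\ref{trivialize M mod T} to identify $M/T$ with $(\image\Phi)\times\Sigma_M$, where $\Sigma_M$ is the genus surface of $M$, so that $M_\exc$ sits inside $\calT\times\Sigma_M$ via the original painting $f_M$. The hypothesis gives an injection $(\Phibar,f)\colon M_\exc\to\calT\times\Sigma$ which records where the exceptional orbits should sit in $M'/T$. I would then choose a $T$-invariant open neighborhood $W\subset M$ of the preimage of $M_\exc$ whose quotient deformation retracts onto $M_\exc$. By the Marle--Guillemin--Sternberg local normal form, the equivariant symplectic structure of $W$ (including its boundary) depends only on the isotropy data recorded in the skeleton, so this piece can be transplanted equivariantly into any target space whose skeleton is isomorphic to $M_\exc$.

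Second, I would build the free complement over the open subset $(\calT\times\Sigma)\setminus(\Phibar,f)(M_\exc)$, minus a tubular neighborhood of the exceptional image, as a principal $T$-bundle equipped with a $T$-invariant symplectic form. The bundle is chosen so that its characteristic class matches $\del W$ on the boundary, and the form is chosen so that, combined with the contribution from $W$, it reproduces the prescribed \DH measure of $M$; locally this uses the associated-bundle construction of Example~\ref{associated}. Gluing $W$ to this free complement along their common boundary by an equivariant diffeomorphism, and applying a Moser-type argument to smooth the symplectic form across the interface, yields $(M',\omega',\Phi',\calT)$ whose skeleton is isomorphic to $M_\exc$, whose moment image is $\image\Phi$, and whose painting is equivalent to $f$ by construction; the \DH measure matches that of $M$ by the choices in the previous step.

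The main obstacle is the existence of the free complement as a genuine symplectic $T$-space with prescribed \DH measure and prescribed boundary principal-bundle data. Concretely, one must show that a suitable Chern class on $\Sigma$ minus the exceptional image can be realized consistently with $\del W$, and that a $T$-invariant symplectic form integrating to the right \DH measure can be constructed over it. The key observation is that, once one restricts to the complement of $(\Phibar,f)(M_\exc)$, the space is essentially a fibration of reduced surfaces over $\calT$; the freedom in the symplectic form on the fibers, constrained by the \DH measure on $\calT$ and by matching $\del W$, reduces to a cohomological balance on the surface $\Sigma$, after which the Moser trick produces the global symplectic form and completes the gluing.
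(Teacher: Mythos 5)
Your proposal takes a genuinely different route from the paper, but it has gaps that are not easy to fill.

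The paper's proof is a one-line deduction: by Lemma~\ref{compatible} and Proposition~\ref{DH is compatible}, the moment image, skeleton, and \DH function of $M$ satisfy the compatibility conditions, and then Theorem~\ref{existence} produces $M'$. The content of Theorem~\ref{existence} is a \emph{local-to-global} argument: cover $\calT$ by small convex open sets, realize the data over each piece by a direct local construction (Proposition~\ref{local existence}), and glue. The gluing is carried out in three sheaf-theoretic stages -- choosing compatible $\Phi$-diffeomorphisms of the quotients that encode the prescribed painting (Proposition~\ref{technical}, i.e.\ Proposition~20.1 of~\cite{globun}), lifting those to $\PhiT$-diffeomorphisms of the manifolds (Proposition~\ref{glue lifts}, using \v{C}ech cohomology of sheaves on the quotient), and finally adjusting the symplectic forms so they agree on overlaps while keeping the \DH function fixed (Proposition~\ref{glue forms given DH}).

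Your proposal instead tries a single global cut-and-paste: take a tubular neighbourhood $W$ of the exceptional set, keep it, and replace the complement by a new ``free'' piece carrying the target surface $\Sigma$. There are two concrete problems with this. First, the complement of $W$ is \emph{not} a free $T$-space: there are non-exceptional orbits with nontrivial stabilizer over the whole boundary $\partial\Delta$ of the moment polytope (e.g.\ edges and vertices of $\Delta$ yield circle and higher-dimensional stabilizers), so the ``principal $T$-bundle over an open subset of $\calT\times\Sigma$'' picture does not describe the complement. Example~\ref{associated} produces its non-free strata from the toric fibre $N$, not from a principal bundle, and even then its \DH function is affine, which a general tall complexity one space is not. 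Second, the claim that ``the equivariant symplectic structure of $W$ ... depends only on the isotropy data recorded in the skeleton'' is too strong: the local normal form theorem determines a neighbourhood of a \emph{single} orbit, not of the closed set $M_\exc$, and by the uniqueness theorem (Theorem~\ref{main-theorem-globun}) the space near $M_\exc$ also depends on the \DH measure and on the gluing between local models; indeed, ensuring the glued space has a painting equivalent to $f$ (rather than merely an isomorphic skeleton) is exactly the delicate point that Proposition~20.1 of~\cite{globun} is designed to handle, and ``by construction'' papers over it. Your final paragraph gestures at a ``cohomological balance on $\Sigma$'' plus a Moser argument, but the actual cohomological obstructions in the paper live over $\calT$ (or over $\Delta$) via Proposition~\ref{prop zero coh}, not over $\Sigma$, and they govern lifting and form-gluing rather than a single Chern class matching.

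In short, the idea of surgering near the skeleton is reasonable in spirit -- the paper's Lemma~\ref{identify} performs a local surgery of exactly this type to remove or insert a single exceptional orbit -- but this is used as a \emph{local} step inside the cover-and-glue framework. Promoting it to a single global cut-and-paste as you propose would still require the sheaf-cohomological gluing results and the painting-control result from~\cite{globun}, so it doesn't bypass the hard parts of the argument.
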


\begin{Remark} \labell{rk:special}
The special case of Theorem~\ref{data from manifold}
where the genus of $\Sigma$ is equal to the genus of $M$
is easier to prove than the general case; see Theorem~\ref{reconstruction}.
\end{Remark}

\begin{Example} \labell{6d example}
Let $(M,\omega,\psi)$ be a six-dimensional symplectic toric manifold 
with moment image
$$
\Delta = \big\{  (x,y,z) \in [-3,3] \times [-2,2] \times [1,4] \ 
\big| \ |x| \leq z  \ \mbox{and} \ \ |y| \leq z \big\}.
$$
Let $\Phi \colon M \to \R^2$ be the composition 
of $\psi \colon M \to \R^3$ with the projection 
$(x,y,z) \mapsto (x,y)$.
Then $(M,\omega,\Phi,\R^2)$ is a tall complexity one space,
and $\Phi$ induces a homeomorphism from the skeleton $M_\exc$ to
the set
\begin{multline*}
\Phi(M_\exc) =  \big\{ (x,y)  \in \R^2  \, \big| \,  
|x| \leq  |y| = 1, \text{ or } |y| \leq  |x| = 1, \\
\text{ or }  1 \leq |x|=|y| \leq 2\big\};
\end{multline*}
thus, $M_\exc$ is homotopy equivalent to $S^1$.
(See Example~\ref{toric}.)

Fix a closed oriented surface $\Sigma$.
Let $[S^1,\Sigma]$ denote the set of homotopy classes of loops in $\Sigma$.
By Remark~\ref{rmk:1to1}, 
there is a one-to-one correspondence between equivalence
classes of paintings $M_\exc \to \Sigma$
and the quotient of $[S^1,\Sigma]$ 
by the action of the group $\Aut(\Sigma)$ 
of orientation preserving homeomorphisms of $\Sigma$.
If $\Sigma$ has genus zero, then since $\Sigma$ is simply connected
any two paintings are equivalent.
In contrast, if $\Sigma$ has positive genus, then the quotient
of $[S^1,\Sigma]$ by $\Aut(\Sigma)$ is infinite.  For example,
if $\Sigma$ has genus one, then this quotient is naturally isomorphic 
to the set of non-negative integers.

Therefore, if $g  = 0$ then Theorem~\ref{main-theorem-globun} implies that 
every tall complexity one space of genus $g$
whose skeleton is isomorphic to $M_\exc$
and whose \DH measure is equal to that of $(M,\omega,\Phi)$
is isomorphic to $(M,\omega,\Phi)$.
In contrast, if $g > 0$ then
Theorems~\ref{main-theorem-globun} and~\ref{data from manifold} imply that
there exist infinitely many  non-isomorphic tall complexity one spaces
with these properties.
\end{Example}

\begin{figure}[ht]
\setlength{\unitlength}{0.00083333in}
\begingroup\makeatletter\ifx\SetFigFont\undefined%
\gdef\SetFigFont#1#2#3#4#5{%
\reset@font\fontsize{#1}{#2pt}%
\fontfamily{#3}\fontseries{#4}\fontshape{#5}%
\selectfont}%
\fi\endgroup%
{\renewcommand{\dashlinestretch}{30}
\begin{picture}(3624,2439)(0,-10)
\path(1212,1812)(2412,1812)(2412,612)
(1212,612)(1212,1812)
\path(2412,1812)(3012,2412)
\path(2412,612)(3012,12)
\path(1212,612)(612,12)
\path(1212,1812)(612,2412)
\dottedline{60}(12,2412)(3612,2412)(3612,12)
(12,12)(12,2412)
\end{picture}
}
\caption{Moment image and skeleton for Example~\ref{6d example}}
\end{figure}

\begin{Example} \labell{8d example}
Fix an integer  $n > 1$.
Let $(M,\omega,\psi)$ be a $(2n+4)$-dimensional
symplectic toric manifold with moment image 
\begin{multline*}
\Delta' = \big\{  (x, y_1,\dots,y_n,z) \in 
[-3,3] \times [-2,2]^n \times [1,4] \ \big| \  \\
\ |x| \leq z 
\ \mbox{and} \ 
|y_i| \leq z  \, \text{ for all } i = 1, \ldots, n \big\}.
\end{multline*}
Composing the moment map $\psi$ with the projection $(x,y_1,\ldots,y_n,z)
\mapsto (x,y_1,\ldots,y_n)$, we obtain a tall complexity one space 
$(M,\omega,\Phi,\R^{n+1})$
such that $M_\exc$ is homotopy equivalent to $S^n$
and $\Phibar \colon M_\exc \to \R^{n+1}$ is one-to-one.
Moreover, 
the group of orientation preserving homeomorphisms acts trivially 
on the set $[S^n,\Sigma]$ of homotopy classes of maps from $S^n$ to $\Sigma$
if $\Sigma$ is an oriented surface of genus $0$,
while $[S^n,\Sigma]$ itself is trivial if  $\Sigma$ has positive genus.
Therefore, if $g > 0$ then Theorem~\ref{main-theorem-globun}
implies that every complexity one space of genus $g$
whose skeleton is isomorphic to $M_\exc$
and whose \DH measure is equal to that of $(M,\omega,\Phi)$
is isomorphic to $(M,\omega,\Phi)$.
In contrast, if $g = 0$ then Theorems~\ref{main-theorem-globun}
and~\ref{data from manifold} 
give a bijection between the set of isomorphism classes
of tall complexity one spaces with these properties
and the set $[S^n,S^2]$.
Thus, there are infinitely many non-isomorphic such spaces if $n=2$ or $n=3$.
\end{Example}

\begin{figure}[ht]
\setlength{\unitlength}{0.00083333in}
\begingroup\makeatletter\ifx\SetFigFont\undefined%
\gdef\SetFigFont#1#2#3#4#5{%
\reset@font\fontsize{#1}{#2pt}%
\fontfamily{#3}\fontseries{#4}\fontshape{#5}%
\selectfont}%
\fi\endgroup%
{\renewcommand{\dashlinestretch}{30}
\begin{picture}(3924,2439)(0,-10)
\path(1062,1662)(2262,1662)(2262,462)
(1062,462)(1062,1662)
\path(1062,1662)(1662,1962)(2862,1962)
(2862,762)(2262,462)
\path(2262,1662)(2862,1962)
\dashline{60.000}(1662,1962)(1662,762)(2862,762)
\dashline{60.000}(1062,462)(1662,762)
\dottedline{45}(12,1812)(2712,1812)(3912,2412)
(1212,2412)(12,1812)
\dottedline{45}(12,12)(2712,12)(3912,612)
(1212,612)(12,12)
\path(1662,1962)(1587,2412)
\path(2862,1962)(3537,2412)
\path(2262,1662)(2337,1812)
\path(1062,1662)(387,1812)
\path(1062,462)(387,12)
\path(2262,462)(2337,12)
\path(2862,762)(3537,612)
\path(1662,762)(1587,612)
\dottedline{45}(12,1812)(12,12)
\dottedline{45}(2712,1812)(2712,12)
\dottedline{45}(3912,2412)(3912,612)
\dottedline{45}(1212,2412)(1212,612)
\end{picture}
}
\caption{Moment image and skeleton for Example~\ref{8d example}}
\end{figure}

\subsection*{The intermediate existence theorem}

Our second existence theorem,
Theorem~\ref{thm:existence}, allows us to construct
complexity one spaces with prescribed painting and moment image,
even when the skeleton does not a-priori come from a complexity one space.
To state this theorem, we need an abstract notion of ``skeleton''.
To apply this theorem, one must check that the skeleton
and moment image satisfy   certain conditions. 
These conditions are automatically satisfied whenever the
the skeleton and moment image 
can be obtained from complexity one spaces over sufficiently small open sets 
in $\ft^*$; see Lemma~\ref{compatible}.
This
allows us to construct new examples by performing surgery
that attaches pieces of \emph{different} complexity one manifolds.
Such surgeries were carried out in~\cite{To, Mo};
this theorem gives a systematic way to perform such surgeries.

\begin{Definition} \labell{def:skeleton}
A \textbf{tall skeleton} over an open subset $\calT$ of $\ft^*$ is a
topological space $S$ whose points are labeled by
(equivalence classes of) representations of
subgroups of $T$, together with a proper map $\pi \colon S \to \calT$.
This data must be locally modeled on the set of exceptional orbits
of a tall complexity one space in the following sense.
For each point $s \in S$, there exists a tall complexity
one Hamiltonian $T$-manifold  $(M,\omega,\Phi)$ with
exceptional orbits $M_\exc \subset M/T$,
and a homeomorphism $\Psi$ from a neighbourhood of $s$ to an open
subset of $M_\exc$ that respects the labels and such
that $\ol\Phi \circ \Psi = \pi$,
where $\ol\Phi \colon M_\exc \to \ft^*$ is induced from the moment map.
An \textbf{isomorphism} between tall skeletons $(S',\pi')$ and
$(S,\pi)$ is a homeomorphism $i \colon S' \to S$
that sends each point to a point with the same isotropy representation
and such that $\pi'  = \pi \circ i$;
cf.\ \cite[p.\ 72]{globun}.
\end{Definition}

\begin{Remark}
In \cite[Definition 16.1]{globun} we called this notion ``skeleton".
Here we added the adjective ``tall" in order to later allow
for skeletons that are not tall.
\end{Remark}

\begin{Example} \labell{Mexc is skeleton}
If $(M,\omega,\Phi,\calT)$ is a tall complexity one space,
the set $M_\exc$, labeled with the isotropy representations,
together with the map $\Phibar$ that is
induced by the moment map, is a tall skeleton over $\calT$;
see Lemma~\ref{exceptional is closed}.
\end{Example}

\begin{Definition} \labell{abstract painting}
Let  $(S,\pi)$ be a tall skeleton over an open set $\calT \subset \t^*$
and let $\Sigma$ be a closed oriented surface.
A \textbf{painting} is a  map $f \colon S \to \Sigma$
such that the map $(\pi,f) \colon S \to  \calT \times \Sigma$ is one-to-one.
Paintings $f \colon S \to \Sigma$ and $f'\colon S' \to \Sigma'$ are
\textbf{equivalent} if there exists an isomorphism
$i \colon S' \to S$ and an orientation preserving homeomorphism
$\xi \colon \Sigma' \to \Sigma$
such that $f \circ i \colon S' \to \Sigma$
and $\xi \circ f' \colon S' \to \Sigma$ are homotopic through paintings.
\end{Definition}

The notions of painting and of equivalence of paintings
given in Definition \ref{abstract painting}
are consistent with our earlier definitions,
which only applied to the special case $(S,\pi) = (M_\exc,\Phibar)$.

Let $\ell$ denote the integral lattice in $\t$ 
and $\ell^*$ the weight lattice in $\t^*$.
Thus, $\ell = \ker(\exp \colon \t \to T)$
and $\ell^* \cong \Hom(T,S^1)$.
Here, the Lie algebra of $S^1$ is identified with $\R$
by setting the exponential map $\R \to S^1$ to be 
$t \mapsto e^{2 \pi i t}$.
Let $\R_+$ denote the set of non-negative numbers.

\begin{Definition} \labell{def:delzant}
A subset $C \subset \t^*$ is a \textbf{Delzant cone}\footnote{
Such a set is also called a ``unimodular cone".}
at $\alpha \in \t^*$
if there exist an integer $0 \leq k \leq n$
and a linear isomorphism $A \colon \R^n \to \t^*$
that sends $\Z^n$ onto the weight lattice $\ell^*$, such that
$$ C = \alpha + A(\R_+^k \times \R^{n-k}) .$$
Let $\calT$ be an  open subset of $\t^*$.
A subset $\Delta \subset \calT$ is a \textbf{Delzant subset} 
if it is closed in $\calT$ and if for every point $\alpha \in \Delta$ 
there exist a neighbourhood
$U \subset \calT$ and a Delzant cone $C$ at $\alpha$
such that $\Delta \cap U = C \cap U$.
\end{Definition}

\begin{Remark} \labell{delzant polytope}
A compact convex set $\Delta \subset \t^*$ 
is a Delzant subset exactly if it is
a \textbf{Delzant polytope},
i.e., a convex polytope such that at each vertex 
the edge vectors are generated by a basis to the lattice.
\end{Remark}

\begin{Remark} \labell{Delzant is convex}
If $\Delta$ is a Delzant subset of a \emph{convex} open subset 
$\calT \subset \t^*$ then, by the Tietze-Nakajima theorem~\cite{Tietze,N},
$\Delta$ is convex exactly if it is connected;
see~\cite{BK}.
\end{Remark}

\begin{Definition} \labell{moment cone} \ 
The \textbf{moment cone} corresponding to a point $s$
in a tall skeleton $(S,\pi)$ 
is the cone
$$ C_s := \pi(s) + (i_H^*)\inv \left( \image \Phi_s \right)\mbox{ in } \t^*,$$
where the label associated to $s$ is 
a linear symplectic representation 
of the subgroup $H$ of $T$ with quadratic moment map $\Phi_s$,
and where $i_H^* \colon \t^* \to \h^*$ is the natural projection map.
It is straightforward to check that $C_s$ is the moment image of the  
complexity one model corresponding to $s$; see Definition~\ref{def:model}.
\end{Definition}

\begin{Definition} \labell{def:compatible}
Let $\calT$ be an open subset of $\t^*$.
A Delzant subset $\Delta$ of $\calT$
and a tall skeleton $(S,\pi)$ over $\calT$
are \textbf{compatible}
if for every point $s \in S$
there exists a neighbourhood $U$ of $\pi(s)$ in $\calT$
such that $U \cap \Delta = U \cap C_s$,
where $C_s$ is the moment cone corresponding to $s$.
\end{Definition}

Let $(M,\omega,\Phi,\calT)$ be a tall complexity one space.
Then its moment map image is a convex Delzant subset of $\calT$
that is compatible with the skeleton $(M_\exc,\Phibar)$;
see Lemma~\ref{compatible}.
Our next theorem shows that this compatibility condition is also
sufficient for a subset of $\calT$ and a painting to arise from
a complexity one space.

\begin{Theorem}    \labell{thm:existence}
Let $(S,\pi)$ be a tall skeleton over a convex open subset
$\calT \subset \t^*$. Let $\Delta \subset \calT$ be a 
convex Delzant subset that is compatible with $(S,\pi)$.
Let $\Sigma$ be a closed oriented surface,
and let $f \colon S \to \Sigma$ be a painting.
Then there exists a tall complexity one space $(M,\omega,\Phi,\calT)$
with moment map image $\Delta$ whose associated painting
is equivalent to~$f$.
\end{Theorem}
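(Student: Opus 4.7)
The plan is to reduce the theorem to Theorem~\ref{data from manifold} by first producing some tall complexity one space $(M_0,\omega_0,\Phi_0,\calT)$ with moment image $\Delta$ and skeleton isomorphic to $(S,\pi)$. Once such an $M_0$ exists, its skeleton identifies canonically with $(S,\pi)$, so the given painting $f \colon S \to \Sigma$ may be interpreted as a painting on $(M_0)_\exc$; Theorem~\ref{data from manifold} then yields a tall complexity one space with the same moment image and a painting equivalent to $f$, which is the desired object.

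To produce $M_0$, I would work locally and glue. Using the definition of a tall skeleton together with compatibility, I cover $\calT$ by a locally finite collection $\{U_i\}$ of convex open sets on each of which there exists a tall complexity one space $(M_i,\omega_i,\Phi_i,U_i)$ with moment image $\Delta\cap U_i$ and skeleton isomorphic to $\pi^{-1}(U_i)$. Near points of $\pi(S)$ the local models come directly from Definition~\ref{def:skeleton}, while near points of $\Delta \setminus \pi(S)$ (and of $\calT \setminus \Delta$) they can be obtained from the associated-bundle construction of Example~\ref{associated} over a local toric piece of $\Delta\cap U_i$. Before gluing I normalize the local data: by rescaling the toric factors and tuning the Chern classes of the associated bundles, I arrange that all local Duistermaat--Heckman measures restrict to a single fixed smooth positive density on $\Delta$, all local genera equal $0$, and all local paintings are trivial.

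With the normalizations in place, on every nonempty overlap $U_i \cap U_j$ the restrictions of $M_i$ and $M_j$ are tall complexity one spaces with the same moment image, Duistermaat--Heckman measure, genus, skeleton, and (trivial) painting, so Theorem~\ref{main-theorem-globun} supplies isomorphisms $\varphi_{ij}\colon M_i|_{U_i\cap U_j}\to M_j|_{U_i\cap U_j}$. These patch into $M_0$ provided the cocycle condition $\varphi_{jk}\circ\varphi_{ij}=\varphi_{ik}$ holds on triple overlaps. I expect this cocycle condition to be the main obstacle of the proof: global uniqueness only guarantees existence of isomorphisms, not canonical choices, so one must refine the cover until the triple overlaps are connected and contractible in an appropriate sense, and then exploit the uniqueness-up-to-automorphism of the local models to absorb the discrepancies by redefining the $\varphi_{ij}$ — this is the same kind of sheaf-theoretic gluing argument that underlies the local-to-global passage used in~\cite{globun}. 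Once the cocycle condition holds, the local pieces glue to a genus-zero tall complexity one space $(M_0,\omega_0,\Phi_0,\calT)$ with moment image $\Delta$ and skeleton $(S,\pi)$, and the final application of Theorem~\ref{data from manifold} completes the construction.
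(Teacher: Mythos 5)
Your proposal correctly identifies the overall shape of the argument — produce local pieces and glue — but it leaves the two genuinely hard steps unresolved, and precisely those steps are where the content of the theorem lives.

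First, the gluing. You acknowledge that the cocycle condition $\varphi_{jk}\circ\varphi_{ij}=\varphi_{ik}$ ``is the main obstacle,'' but the sentence that follows (``refine the cover \ldots\ exploit the uniqueness-up-to-automorphism of the local models to absorb the discrepancies'') is not an argument. Global uniqueness (Theorem~\ref{main-theorem-globun}) gives you isomorphisms on double overlaps, but a priori they can fail to satisfy the cocycle condition in a way that cannot be fixed by refining the cover; the obstruction lives in a degree-two cohomology group, and one has to prove that this group vanishes. This is exactly what the paper does: the $\Phi$-diffeomorphisms on quotients together with the control over the resulting painting come from \cite[Prop.~20.1]{globun} (restated as Proposition~\ref{technical}); these are lifted to equivariant diffeomorphisms of the local manifolds satisfying a cocycle condition in Proposition~\ref{glue lifts}, where the key input is the vanishing statement $\check H^2(\calT,\fH^0_{T^\infty})=0$ (Lemma~\ref{H2 is zero}, built on Proposition~\ref{prop zero coh}); and then the symplectic forms are glued in Proposition~\ref{glue forms given DH} using further vanishing statements and a partition-of-unity argument. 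None of this comes for free.

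Second, the normalization of the Duistermaat--Heckman data. You assert that one can ``rescale the toric factors and tune the Chern classes'' so that all local DH densities agree with a single fixed density. This is in fact a theorem (Lemma~\ref{DH existence} in the paper), whose proof again uses a cohomological patching argument ($\check H^1(\Delta,\cA)=0$ because $\Delta$ is convex, where $\cA$ is the sheaf of locally constant $\ell\oplus\R$-valued functions), together with the structural result (Proposition~\ref{DH is compatible}) that the local DH functions differ on overlaps by integral affine functions. It is not a normalization one performs piece-by-piece; the existence of a \emph{globally} consistent DH density is itself something to prove.

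Two smaller points. The production of the local pieces $(M_i,\omega_i,\Phi_i,U_i)$ with moment image $\Delta\cap U_i$ and skeleton $S|_{U_i}$ is not immediate from Definition~\ref{def:skeleton}: that definition only gives a local homeomorphism into $M_\exc$ for some tall complexity one manifold, not a complexity one space over $U_i$ with the prescribed skeleton and moment image; the paper must perform a genuine surgery (Proposition~\ref{local existence}, using Lemma~\ref{identify}) to remove or add exceptional orbits so that the local model matches $S|_{U_i}$. Finally, your last step invokes Theorem~\ref{data from manifold}, which the paper deduces from Theorem~\ref{existence}; since you would be using the same gluing machinery to build $M_0$, the detour through $M_0$ is unnecessary — one should instead carry the prescribed painting through the gluing, which is exactly what Proposition~\ref{technical} is designed to do.
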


\subsection*{The most general existence theorem}

Our final existence theorem, Theorem~\ref{existence},
provides a complete list of all the possible values of the invariants of
tall complexity one spaces.  Together with Theorem~\ref{main-theorem-globun},
this gives a complete classification of tall complexity one spaces.

The \textbf{\DH function} of a Hamiltonian $T$-manifold 
is a real valued function on the moment image
whose product with Lebesgue measure is equal to the \DH measure. 
If such a function exists, then it is almost unique;
any two such functions are equal almost everywhere. \labell{page:unique}
When we say that the \DH function of a Hamiltonian $T$-manifold
has some property (e.g., continuity),
we mean that this holds after possibly changing the function
on a set of measure zero. 
Here, we normalize Lebesgue measure on $\t^*$ such that the volume
of the quotient $\t^*/\ell^*$ is one.

A function $\rho \colon \t^* \to \R$
is \textbf{integral affine} if it has the form
$$ \rho(x) = \left< x, A \right> + B ,$$
where $A$ is an element of the integral lattice $\ell \subset \t$,
where $B \in \R$, and where
$\left< \cdot, \cdot \right>$ is the pairing
between $\t^*$ and $\t$.
The \DH theorem implies that 
the \DH function of a complexity one space 
with no exceptional orbits is integral affine.

Once and for all, fix an inner product on $\ft$.
Let a closed subgroup $H \subset T$ act on $\C^n$
as a subgroup of $(S^1)^n$ with quadratic moment map
$\Phi_H  \colon \C^n \to \h^*$.
Let $\fh^0 \subset \ft^*$ be the annihilator of the Lie algebra $\fh$,
and consider the model
$$ Y = T \times_H \C^n \times \h^0 ,$$
where $[ta,z,\nu] = [t,az,\nu]$ 
for all $(t,z,\nu) \in T \times \C^n \times \h^0$ and $a \in H$.
There exists a $T$  invariant symplectic form on $Y$
with moment map
$$\Phi_Y ([t,z,\nu]) = \alpha + \Phi_H(z) + \nu,$$
where $\alpha \in \t^*$ and
where we use the inner product to embed $\fh^*$ in $\ft^*$.
The isotropy representation of the orbit $\{ [t,0,0] \}$
determines the model up to permutation of the coordinates in $\C^n$.
If $\dim T = \half \dim Y - 1$, or, equivalently, $n = h+1$
where $h = \dim H$,
we call the space $Y$ a \textbf{complexity one model}.

\begin{Definition}\labell{def:model}
Given a point $s$ in a tall skeleton $S$, the \textbf{corresponding model}
is the model
$Y = T \times_H \C^{h+1} \times \h^0$ such that $s$ is labeled
by the isotropy representation of $\{ [t,0,0] \}$ in $Y$.
\end{Definition}

Such a model exists and is unique
up to permutation of the coordinates in $\C^{h+1}$.
Moreover, by Corollary~\ref{tall models}, the corresponding model 
is always  tall.

Let $Y$ be a tall complexity one model.
In Section~\ref{sec:DH for model} we define 
the \emph{ \DH functions for truncations of the model}.
(In fact, such functions are the \DH functions
of compact spaces that are obtained from $Y$ 
by extending the action to a toric action, choosing a subcircle
that is complementary to the original action,
and taking a symplectic cut with respect to this circle.)

\begin{Definition} \labell{compatible rho}
Let $(S,\pi)$ be a tall skeleton over an open subset $\calT$ of $\t^*$.
Let $\Delta \subset \calT$ be a convex Delzant subset
that is compatible with $(S,\pi)$.  Fix a point $\alpha \in \Delta$.
A function $\rho \colon \Delta \to \R_{>0}$
is \textbf{compatible with $\mathbf{(S,\pi)}$  
at the point $\mathbf{\alpha \in \Delta}$ }
if there exist for each $s \in \pi\inv(\alpha)$
a \DH function $\rho_s$ for a truncation of the tall complexity one 
model associated to $s$ such that the difference 
\begin{equation}\labell{rho}
\rho - \sum_{s \in \pi\inv(\alpha)} \rho_s
\end{equation}
is integral affine on some neighbourhood of $\alpha$ in $\Delta$.
(In particular, if $\pi\inv(\alpha)$ is empty,
then the condition is that $\rho$ itself be integral affine near $\alpha$.)
The function $\rho$ is 
\textbf{compatible with $\mathbf{(S,\pi)}$  }
if it is compatible with $(S,\pi)$ 
at every $\alpha \in \Delta$.
\end{Definition}

\begin{Remark} \labell{rk on compatible rho} \ 
The above notion of ``compatible" is in fact well defined;
moreover, the difference between any two compatible functions
is integral affine near $\alpha$.
To see this, let
$(S,\pi)$ be a tall skeleton over $\calT$;   fix $\alpha \in \calT$.
The preimage $\pi\inv(\alpha) \subset S$ is finite;
see Corollary~\ref{discrete}.
Thus, the summation in \eqref{rho} is finite.
By Corollary~\ref{exists function}, 
for each $s \in \pi\inv(\alpha)$,
there exists a \DH function $\rho_s$  for a truncation of the tall complexity
one model $Y_s$ associated to $s$;  moreover, $\rho_s$ is defined on a 
neighborhood of $\alpha$ in $\image \Phi_{Y_s}$.
By Definitions~\ref{moment cone}~and~\ref{def:compatible}, 
the moment cone $C_s = \image \Phi_{Y_s}$ coincides with $\Delta$ 
near $\alpha$
for all $s \in \pi^{-1}(\alpha)$. Thus, 
the function  in \eqref{rho}
is defined on a neighbourhood of $\alpha$ in $\Delta$.
Finally, if both $\rho_s$ and $\rho'_s$ are \DH functions 
for truncations of the model $Y_s$, then 
by Corollary~\ref{difference is integral affine}
there exists a neighbourhood of $\alpha$ in $C_s$, hence in $\Delta$,
on which 
the difference $\rho_s - \rho'_s$ coincides
with an integral affine function.
\end{Remark}

The \DH function of a tall complexity one space is compatible
with the skeleton;
see Proposition~\ref{DH is compatible}.
Our final theorem shows that this compatibility condition is
also sufficient for a function, a subset of $\calT$, and
a painting to arise from a complexity one space.

\begin{Theorem}[Global existence] \labell{existence}
Let $(S,\pi)$ be a tall skeleton over a convex open subset
$\calT \subset \t^*$, let $\Delta \subset \calT$ be a 
convex Delzant subset
that is compatible with $(S,\pi)$, and
let $\rho \colon \Delta \to \R_{>0}$ be a function that is compatible
with $(S,\pi)$. 
Let $\Sigma$ be a closed oriented surface, and let
$f \colon S \to \Sigma$ be a painting.
Then there exists a tall complexity one space over $\calT$
with moment image $\Delta$ and \DH function $\rho$ whose painting 
is equivalent to $f$.
\end{Theorem}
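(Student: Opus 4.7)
The plan is to deduce the global existence theorem from the intermediate Theorem~\ref{thm:existence} by a local modification that installs the correct \DH function without disturbing the other invariants. First I apply Theorem~\ref{thm:existence} to the data $(S,\pi,\Delta,\Sigma,f)$ to obtain a tall complexity one space $(M_0,\omega_0,\Phi_0,\calT)$ with moment image $\Delta$, skeleton isomorphic to $(S,\pi)$, and painting equivalent to $f$. Let $\rho_0$ be its \DH function. By Proposition~\ref{DH is compatible}, $\rho_0$ is compatible with $(S,\pi)$; by hypothesis so is $\rho$. Invoking Remark~\ref{rk on compatible rho} at each point of $\Delta$ twice (once with $\rho$ and once with $\rho_0$, using the \emph{same} \DH functions $\rho_s$ for the truncations of the models), the difference $\delta := \rho - \rho_0$ is locally integral affine on $\Delta$, i.e., every $\alpha \in \Delta$ has a neighbourhood on which $\delta$ agrees with an integral affine function. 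In particular, the exceptional locus is irrelevant to $\delta$: the modification I need to perform is supported away from the skeleton.

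Next I want to realize $\delta$ geometrically, and the prototype is Example~\ref{associated}: tensoring by a principal $T$-bundle $P \to \Sigma$ of Chern class $c \in H^2(\Sigma,\ell)$ multiplies (equivalently, adds to) the \DH function by an affine function with slope~$c$. Using the trivialization $M_0/T \cong (\image \Phi_0) \times \Sigma$ furnished by Proposition~\ref{trivialize M mod T}, and working away from a tubular neighbourhood of the preimage of $S$, I would cover $\Delta$ by small open balls $U_i$ on which $\delta$ equals a single integral affine function~$A_i$, and modify $M_0$ over each $U_i$ by twisting by a principal $T$-bundle over a subsurface of $\Sigma$ whose first Chern class realizes the slope of $A_i$. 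The transition from $U_i$ to $U_j$ is governed by the integral affine function $A_i - A_j$ on the overlap, which provides the consistent cocycle data for patching the local twists into a single global symplectic manifold $(M,\omega,\Phi,\calT)$.

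The main obstacle is the bookkeeping of this gluing: the twisting must be trivial in a neighbourhood of the skeleton (so that the exceptional orbits and their isotropy representations, and hence the skeleton isomorphism class, are preserved), it must respect the painting up to homotopy through paintings (the twist is performed on the trivial part $M_0 \setminus (\text{nbhd of } \Phi_0^{-1}(M_\exc))$, which has product structure with $\Sigma$, so compositions with the painting data remain unchanged up to the allowed equivalence), and the local modifications must assemble to a globally well-defined $T$-space whose \DH function is exactly $\rho_0 + \delta = \rho$. Once assembled, checking that the moment image is still $\Delta$ and that the skeleton and painting are unchanged reduces to local verifications near each $\alpha \in \Delta$, which follow from the product structure of the modification away from the exceptional set and from the fact that the local models used in Definition~\ref{compatible rho} were the same for $\rho$ and $\rho_0$.

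Finally, having produced a tall complexity one space over $\calT$ with the prescribed moment image, \DH function, and painting, the theorem is established; uniqueness of such a space up to isomorphism, although not claimed in the statement, is guaranteed by the global uniqueness Theorem~\ref{main-theorem-globun}, which provides a useful consistency check on the construction.
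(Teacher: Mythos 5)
Your proposal inverts the logical dependency in the paper and is therefore circular. In the paper, Theorem~\ref{thm:existence} (the ``intermediate'' existence theorem) is proved \emph{from} Theorem~\ref{existence} together with Lemma~\ref{DH existence}: one first shows that some compatible \DH function exists for the given skeleton and Delzant subset, and then invokes Theorem~\ref{existence} to get the space. This is stated explicitly in the introduction (``These two theorems are actually consequences of the most general theorem'') and in the one-line proof of Theorem~\ref{thm:existence} at the end of Section~\ref{sec:proof}. So you cannot use Theorem~\ref{thm:existence} as a black box in a proof of Theorem~\ref{existence} without first supplying an independent proof of it, which you have not done.

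Even setting the circularity aside, the second half of your argument --- the post-hoc modification of the \DH function by twisting --- is not a proof but a sketch of a different (and harder) construction. Example~\ref{associated} shows how a \emph{global} product twist $P \times_T N$ changes the \DH function by an affine term, but it does not provide a surgery operation on a general tall complexity one space that changes the \DH function locally over $\Delta$. Your proposal to ``modify $M_0$ over each $U_i$ by twisting by a principal $T$-bundle over a subsurface of $\Sigma$'' is ill-defined: first Chern classes of bundles over open subsurfaces vanish, so this does not produce the desired localized slope; and the crucial step --- assembling these local twists into a globally well-defined symplectic $T$-manifold --- is exactly the gluing problem the paper spends Sections~\ref{sec:topology} through~\ref{sec:proof reconstruction} on. You gesture at ``consistent cocycle data'' but this is where the work lies: it requires precisely the cohomological vanishing of Proposition~\ref{prop zero coh}, the lifting of Proposition~\ref{glue lifts}, and the form-gluing of Proposition~\ref{glue forms given DH}.

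The paper's route is opposite to yours and more direct: by Proposition~\ref{local existence}~(2), one constructs, over each small convex $U \subset \calT$, a tall complexity one space whose moment image is $\Delta \cap U$, whose skeleton is isomorphic to $S|_U$, and whose \DH function is \emph{already} $\rho|_U$; then Proposition~\ref{combined} (which bundles the gluing machinery plus the painting-control result quoted as Proposition~\ref{technical}) assembles these local pieces into a global space with the prescribed invariants, including the painting. In other words, the \DH function is built in at the local stage rather than patched in afterward. If you want to salvage your strategy, you would at minimum need to give an independent proof of Theorem~\ref{thm:existence} and then develop a genuine ``surgery'' operation that changes the \DH function by a prescribed locally integral affine function while preserving the skeleton and painting --- which, in effect, reproduces the paper's local-to-global machinery in a harder form.
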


\smallskip

Section~\ref{sec:background} contains some general facts 
about complexity one spaces.
The remainder of the paper is divided into two parts.
Sections~\ref{sec:topology} through \ref{sec:proof reconstruction}
constitute Part I of the paper and lead to Theorem~\ref{reconstruction}.
This is a \emph{reconstruction theorem} in the sense that
we take a tall complexity one space, 
break it into pieces, 
and glue the pieces together so as to obtain a new complexity one space.
In Section \ref{sec:topology}, we prove some facts
about the cohomology of spaces that are locally
modeled on the quotients of complexity one spaces.
In Section \ref{sec:lifting}, we glue local pieces 
of complexity one spaces as $T$-manifolds.  In section \ref{sec:gluing}, 
we show how to arrange that the symplectic forms on these local pieces 
will agree on their overlaps.
In Section \ref{sec:proof reconstruction}, we use the technology developed 
so far and a crucial technical result from our previous paper
\cite[Prop.~20.1]{globun}
to prove Theorem  \ref{reconstruction}.
Sections~\ref{sec:compatible skeleton} through \ref{sec:proof}
constitute Part II of the paper.
In Section \ref{sec:compatible skeleton} we show that
the moment map image and  skeleton of a tall complexity one space
satisfy our compatibility conditions.
In Section \ref{sec:DH compatible} 
we use technical results from Section~\ref{sec:DH for model}
to show that
the \DH measure of a complexity one space is compatible with its skeleton,
and we give a local existence theorem: 
any compatible data \emph{locally} comes from a complexity one space.  
Finally, in Section \ref{sec:proof}, 
we combine these results 
with a variant of the reconstruction theorem from 
Section~\ref{sec:proof reconstruction}
to prove the main existence theorems:
Theorems \ref{data from manifold}, \ref{thm:existence}, and \ref{existence}.

\section{Basic properties of complexity one spaces}
\labell{sec:background}

In this section we recall the local normal form theorem
and the convexity package,
and analyze some of their basic  consequences 
for complexity one Hamiltonian torus actions.

\subsection*{Local normal form theorem} \

For every orbit $x$ in a Hamiltonian $T$-manifold $M$
there is a corresponding model $Y = T \times_H \C^n \times \h^0$ 
such that the isotropy representation
of the orbit $\{ [t,0,0] \} $ is the same as that of $x$.
The \textbf{local normal form for Hamiltonian torus actions}
asserts that there exists
an equivariant symplectomorphism from an invariant neighbourhood
of $x$ in $M$ to an invariant open subset of $Y$
that carries $x$ to $\{ [t,0,0] \}$; see \cite{GS:normal,marle}.

\subsection*{Convexity package} \

Let $(M,\omega,\Phi)$ be a connected Hamiltonian $T$-manifold.
Suppose that there exists a convex open subset $\calT$ of $\t^*$
that contains $\Phi(M)$ and such that $\Phi \colon M \to \calT$
is proper. Then we have the following \emph{convexity package}.

\begin{description}
\item[Convexity] The moment map image, $\Phi(M)$, is convex.
\item[Connectedness]  The moment fiber, $\Phi\inv(\alpha)$, is connected
for all $\alpha \in \calT$.
\item[Stability]  As a map to $\Phi(M)$, the moment map is open.
\end{description}
These three properties also hold for the moment map of a local model.
Note that together
the three properties imply that
the moment map preimage of every convex set
is connected.\labell{page:convexity}
Moreover, by  convexity, stability, and the local normal form theorem, 
$\Delta := \Phi(M)$ is a convex polyhedral subset of $\calT$
whose faces have rational slopes.

For the compact case,
see \cite{At}, \cite{GS}, and \cite[Theorem 6.5]{Sj};
also see \cite{LT}.
For convexity and connectedness in the case of proper moment maps
to open convex sets, see \cite{LMTW}. Stability
then follows from the local normal form theorem and stability
for local models; see \cite[Theorem 5.4 and Example 5.5]{Sj}.
Also see~\cite[section 7]{BK}.

\begin{Remark} \labell{rmk:short}

In the situation described above, the set of $\alpha$ in $\Delta$ 
such that the reduced space $\Phi\inv(\alpha)/T$ is a single point
is a union of closed faces of $\Delta$.
To see this,  fix a point $x \in \Delta$ and
let $F_x$ be the smallest (closed) face containing $x$.
The preimage $M_{F_x} = \Phi\inv(F_x)$ is a symplectic manifold with a Hamiltonian
$T$ action. (This follows from the local normal form theorem,
which we will henceforth use without comment.)    Moreover, since $F_x$ is convex,
$M_{F_x}$ is connected.
By stability, the subgroup that acts trivally on $M_{F_x}$ has Lie algebra
$(TF_x)^\circ = \{  \xi \in \ft \mid 
   \langle (y - z) , \xi \rangle = 0 \mbox{ for all } y,z \in F_x\}$.
Moreover, if we assume that $\Phi\inv(x)$ is a single orbit, then stability implies
that $\dim M_{F_x} = 2 \dim F_x$.
Because the moment map level sets of $M$ are connected,
this implies that the level sets over $F_x$ are single orbits, as required.
\end{Remark}

\subsection*{Some consequences} \ 

In order to apply these theorems to complexity one spaces, 
we now analyze complexity one models.

\begin{Lemma} \labell{lemma:Y}
Let $Y = T \times_H \C^{h+1} \times \h^0$
be a complexity one model with moment map
$ \Phi_Y ([t,z,\nu])  = \alpha + \Phi_H(z) + \nu $.
\begin{itemize}
\item
If the moment map $\Phi_Y$ is proper, then the level set
$\Phi_Y\inv(\alpha)$ consists of a single orbit.
\item
If the moment map $\Phi_Y$ is not proper,
then there exists a homeomorphism 
\begin{equation} \labell{homeo}
Y/T \to \left( \image \Phi_Y \right) \times \C
\end{equation}
whose first component is induced from the moment map 
and whose second component takes the set of exceptional
orbits to zero.
\end{itemize}
Moreover, the map~\eqref{homeo} carries the symplectic orientation
of the smooth part of each reduced space to the complex orientation of $\C$.
\end{Lemma}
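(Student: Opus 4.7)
The plan is to handle the two bullets separately. The key observation throughout is that, under the orthogonal decomposition $\t^* = \h^* \oplus \h^0$ fixed by the chosen inner product, the equation $\Phi_H(z) + \nu = \beta$ splits into $\Phi_H(z) = \beta_{\h^*}$ and $\nu = \beta_{\h^0}$. For the first bullet, this splitting reduces $\Phi_Y^{-1}(\alpha)$ to $T \times_H (\Phi_H^{-1}(0) \times \{0\})$. Properness of $\Phi_Y$ is equivalent to properness of the quadratic map $\Phi_H(z) = \sum_i |z_i|^2 w_i$, which in turn is equivalent to the weights $w_i$ being nonzero and spanning a pointed cone in $\h^*$; a linear functional $\phi$ positive on such a cone gives $\phi(\Phi_H(z)) \geq m \sum |z_i|^2$ for some $m > 0$, forcing $\Phi_H^{-1}(0) = \{0\}$ and hence $\Phi_Y^{-1}(\alpha) = T/H$, a single orbit.

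For the second bullet, the map $[t,z,\nu] \mapsto ([z],\nu)$ identifies $Y/T$ with $\C^{h+1}/H \times \h^0$, under which $\image \Phi_Y$ becomes $\Phi_H(\C^{h+1}) \times \h^0$. So it suffices to construct a homeomorphism $\C^{h+1}/H \to \Phi_H(\C^{h+1}) \times \C$ with first coordinate the descent $\ol{\Phi}_H$ of $\Phi_H$, which sends exceptional orbits to $0 \in \C$. Writing $\pi_H \colon \R^{h+1} \to \h^*$, $e_i \mapsto w_i$, the complexity one condition forces $\dim \ker \pi_H = 1$, and failure of properness (via the first paragraph) means this line meets $\R_{\geq 0}^{h+1}$ non-trivially. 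So I may choose a primitive character $v \in \Z^{h+1}$ of $(S^1)^{h+1}/H \cong S^1$ with all $v_i \geq 0$, and define $u(z) := \prod_i z_i^{v_i}$, an $H$-invariant polynomial.

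Bijectivity of $(\ol{\Phi}_H, u)$ will be verified by reconstructing the $H$-orbit from its image. For fixed $\beta$, the set $\pi_H^{-1}(\beta) \cap \R_{\geq 0}^{h+1}$ is a half-line $\{t_0 + sv : s \geq s_{\min}\}$, and on it the function $s \mapsto \prod_i (t_0 + sv)_i^{v_i}$ is strictly increasing from $0$ to $\infty$, since its logarithmic derivative $\sum_i v_i^2/(t_0 + sv)_i$ is strictly positive. Hence the norms $(|z_i|^2)$ are uniquely determined by $(\beta, |u|^2)$, and primitivity of $v$ then implies that $\arg u$ uniquely determines the phases modulo $H$ wherever they are defined. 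The same monotonicity shows $(\ol{\Phi}_H, u)$ is proper, which combined with local compactness of both sides upgrades the continuous bijection to a homeomorphism.

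Finally, the exceptional orbits are those on which the residual $S^1 = (S^1)^{h+1}/H$-action on $\C^{h+1}/H$ has positive-dimensional stabilizer; these are precisely the orbits of points with $z_i = 0$ for some $i$ with $v_i > 0$, equivalently the points with $u = 0$. For the orientation, on a generic reduced space I would compute directly: in coordinates adapted to the primitive character $v$ one may write $u = r^2 e^{i\theta_+}$, and the reduced symplectic form descends to a positive multiple of $\tfrac{i}{2}\,du \wedge d\bar u$, matching the complex area form on $\C$. The main technical nuisance will be the bookkeeping near the singular strata where some $z_i$ vanishes, where phases are ill-defined and the reduced space is singular; but these all lie over $u = 0$, so bijectivity there can be checked directly from the formula for $u$ together with the stabilizer analysis, and the orientation statement is vacuous on the singular locus.
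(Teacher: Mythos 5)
Your proof is a genuine, explicit argument, whereas the paper disposes of the second bullet and the orientation claim by citing Lemma~6.2 and Definition~8.2 of~\cite{locun}; so you are in effect reproving that earlier lemma from scratch. The overall architecture (reduce to $\C^{h+1}/H$, pick a primitive nonnegative generator $v$ of $\ker\pi_H$, use the $H$-invariant monomial $u = \prod z_i^{v_i}$ as the second coordinate, verify bijectivity via the ray $\pi_H^{-1}(\beta)\cap\R_{\geq 0}^{h+1}$, and upgrade to a homeomorphism by properness) is sound, and the first bullet, while a little circuitous (one can skip the pointed-cone characterization: properness directly makes $\Phi_Y^{-1}(\alpha)$ compact, and the homogeneity of $\Phi_H$ forces the cone $\Phi_H^{-1}(0)$ to be $\{0\}$), is correct.

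There is, however, a genuine error in your treatment of the exceptional locus. You assert a chain of equivalences: exceptional $\Leftrightarrow$ fixed by the residual $S^1$ $\Leftrightarrow$ some $z_i=0$ with $v_i>0$ $\Leftrightarrow$ $u=0$. The last three are indeed equivalent, but the first ``$\Leftrightarrow$'' is false: exceptional orbits are only a \emph{subset} of $u^{-1}(0)$, in general a proper one. Concretely, take $T=H=S^1$ acting on $\C^2$ with weights $(1,-1)$, so $\fh^0=0$, $v=(1,1)$, $u=z_0z_1$. The orbit of $(z_0,0)$ with $z_0\neq 0$ has $u=0$ and is fixed by the residual $S^1$, yet its $H$-stabilizer is trivial, and every nearby orbit in the fiber $\Phi_H^{-1}(\pi|z_0|^2)$ also has trivial stabilizer; so it is \emph{not} exceptional. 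Fortunately, the statement only needs the containment $M_{\exc}\subset u^{-1}(0)$, and that direction is true: if $[z]$ is not $S^1$-fixed, the $S^1$-orbit of $[z]$ is a circle of nearby orbits in the same moment fiber with the same stabilizer, so $[z]$ is not exceptional; and $S^1$-fixed points do satisfy $u=0$ by the dimension count you sketch. You should replace the claimed equivalence with this one-directional argument. The gloss over the singular strata and the orientation computation are, by contrast, acceptable as sketches; the bijectivity argument you give actually covers the boundary strata once one notes that for $I=\{i:z_i\neq 0\}$ the projection $\chi_I\colon H\to (S^1)^I$ is surjective whenever some $j\notin I$ has $v_j>0$, so the fiber of $(\ol{\Phi}_H,u)$ over $(\beta,0)$ is a single $H$-orbit.
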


\begin{proof}
The first assertion
follows from the formula for $\Phi_Y$
and the fact that $\Phi_H$ is quadratic, hence homogeneous.
For the second assertion, see \cite[Lemma~6.2]{locun},
and see \cite[Definition~8.2]{locun} and the sentence that follows it.
\end{proof}

\begin{Corollary}[Short/tall dichotomy]   \labell{dichotomy}
Let $(M,\omega,\Phi,\calT)$ be a complexity one space with moment
image $\Delta = \image \Phi$.  A nonempty reduced space $\Phi\inv(\alpha)/T$ 
is either a connected two dimensional oriented
topological manifold or a single point.
The set of $\alpha$ where the latter occurs is a union of closed faces 
of $\Delta$.
\end{Corollary}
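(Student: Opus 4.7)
The plan is to fix $\alpha \in \Delta$, cover $\Phi^{-1}(\alpha)$ by local normal form charts, and read off the two cases of Lemma~\ref{lemma:Y} to get a local description of the reduced space; then connectedness (from the convexity package) will force global uniformity, and the second statement is already contained in Remark~\ref{rmk:short}.

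First I would note that since $\half \dim M - \dim T = 1$, the local normal form at any orbit $T \cdot x \subset \Phi^{-1}(\alpha)$ produces an equivariant symplectomorphism from an invariant neighbourhood of $T \cdot x$ to an invariant open set in a model $Y = T \times_H \C^n \times \h^0$ with $2\dim T + 2n - 2\dim H = \dim M = 2\dim T + 2$, hence $n = h+1$. So the local model is a complexity one model, and Lemma~\ref{lemma:Y} applies. Passing to $T$-quotients, this identifies a neighbourhood of $T \cdot x$ in $\Phi^{-1}(\alpha)/T$ with a neighbourhood of the corresponding point in $\Phi_Y^{-1}(\alpha)/T$.

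Next I would invoke Lemma~\ref{lemma:Y}: either $\Phi_Y$ is proper, in which case $\Phi_Y^{-1}(\alpha)$ is a single orbit and $T \cdot x$ is isolated in $\Phi^{-1}(\alpha)$ (so $T\cdot x$ is an isolated point of the reduced space), or $\Phi_Y$ is not proper, in which case the homeomorphism $Y/T \cong (\image \Phi_Y) \times \C$ identifies a neighbourhood of $T\cdot x$ in the reduced space with an open subset of $\C$ (a 2-disc), carrying the symplectic orientation to the complex orientation on $\C$. Thus at every orbit in $\Phi^{-1}(\alpha)$, the reduced space is either locally a singleton or locally a piece of an oriented topological surface.

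Now I would combine this with connectedness. The convexity package says $\Phi^{-1}(\alpha)$ is connected, hence so is $\Phi^{-1}(\alpha)/T$. The set $A$ of orbits of the first (isolated) type and the set $B$ of orbits of the second type are each open in $\Phi^{-1}(\alpha)/T$: $A$ is open because an isolated point of $\Phi^{-1}(\alpha)$ has a $T$-invariant neighbourhood meeting no other orbits in the fiber, and $B$ is open because the local homeomorphism to $\C$ is open. Since $A$ and $B$ are disjoint and cover the connected space $\Phi^{-1}(\alpha)/T$, one of them is empty. If $B = \emptyset$, then $\Phi^{-1}(\alpha)/T$ is connected and discrete, hence a single point. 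Otherwise $A = \emptyset$ and the charts from Lemma~\ref{lemma:Y} give $\Phi^{-1}(\alpha)/T$ the structure of a connected 2-dimensional topological manifold, oriented because the local charts all respect the complex orientation on $\C$.

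Finally, the second assertion is exactly Remark~\ref{rmk:short}, which I would simply cite. The only mildly delicate point in the argument is the openness of $A$, and hence the ``single orbit per fiber'' consequence of properness in the local model; everything else is a clean application of the convexity package, local normal form, and Lemma~\ref{lemma:Y}.
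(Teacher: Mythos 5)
Your proof is correct and takes essentially the same route as the paper's: apply the local normal form theorem and Lemma~\ref{lemma:Y} to get the local dichotomy (isolated point versus open piece of an oriented surface), use connectedness of fibers from the convexity package to force one of the two types globally, and cite Remark~\ref{rmk:short} for the union-of-faces statement. Your presentation via the open-open partition of $\Phi^{-1}(\alpha)/T$ into sets $A$ and $B$ just makes explicit the same mechanism the paper states more tersely when it says the complement of the surface locus is discrete.
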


\begin{proof}
By Lemma~\ref{lemma:Y} and the local normal form theorem 
there exists an open set $U \subset M/T$ such that,
for each $\alpha \in \Delta$, 
the intersection $\Phi^{-1}(\alpha)/T \cap U$ is a two dimensional oriented 
topological manifold, and its complement in $\Phi^{-1}(\alpha)/T$
is discrete.
Hence, by the connectedness of the level sets, every non-empty reduced space 
either consists of a single point or is a connected two dimensional oriented 
topological manifold.
The last claim follows from Remark~\ref{rmk:short}.
\end{proof}

By Corollary~\ref{dichotomy}, we can use the following fact 
to understand $\Delta_\tall$.

\begin{Corollary} \labell{tall models}
In a tall complexity one Hamiltonian $T$-manifold, the corresponding
local models are tall.  
\end{Corollary}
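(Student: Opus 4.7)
The plan is to leverage the dichotomy supplied by Lemma~\ref{lemma:Y}: a complexity one model $Y$ either has proper $\Phi_Y$ — in which case $\Phi_Y\inv(\alpha)$ is a single orbit, so the reduced space at $\alpha$ is a single point — or has non-proper $\Phi_Y$, in which case $Y/T$ is homeomorphic to $(\image \Phi_Y) \times \C$ and every non-empty reduced space is two dimensional. Consequently, $Y$ is tall if and only if $\Phi_Y$ is non-proper, and my task reduces to ruling out the properness alternative at an arbitrary orbit of a tall complexity one space.

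To carry this out, I would fix an orbit $T \cdot x \subset M$, let $Y = T \times_H \C^{h+1} \times \h^0$ be the corresponding complexity one model, and set $\alpha = \Phi(x)$. The local normal form theorem supplies an equivariant symplectomorphism $\Psi$ from a $T$-invariant open neighbourhood $U$ of $T \cdot x$ in $M$ onto a $T$-invariant open neighbourhood of the orbit $\{[1,0,0]\}$ in $Y$, carrying $T \cdot x$ to $\{[1,0,0]\}$ and intertwining the moment maps. Passing to $T$-quotients, $\Psi$ induces a homeomorphism between neighbourhoods of the respective images of these orbits in the reduced spaces $\Phi\inv(\alpha)/T$ and $\Phi_Y\inv(\alpha)/T$.

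Now I would argue by contradiction. If $\Phi_Y$ were proper, Lemma~\ref{lemma:Y} would give $\Phi_Y\inv(\alpha) = T \cdot [1,0,0]$, so its image would be an isolated point of $\Phi_Y\inv(\alpha)/T$. Transporting via the local homeomorphism above, the image of $T \cdot x$ would be isolated in $\Phi\inv(\alpha)/T$. But tallness of $(M,\omega,\Phi)$ forces $\Phi\inv(\alpha)/T$ to be a two dimensional topological manifold, which admits no isolated points — a contradiction. Hence $\Phi_Y$ is non-proper, and the second clause of Lemma~\ref{lemma:Y} gives tallness of $Y$. The only point needing care, and arguably the main (but routine) verification, is that the local normal form really matches neighbourhoods of the two reduced spaces so that an isolated-point obstruction pulls back; this is automatic because $\Psi$ is a $T$-equivariant open embedding respecting the moment maps, hence descends to a homeomorphism of local reduced-space neighbourhoods.
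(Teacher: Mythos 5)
Your proof is correct and uses the same two ingredients as the paper's: Lemma~\ref{lemma:Y} (the proper/non-proper dichotomy for models) and the local normal form theorem. Phrasing the step as a contradiction via the single level set $\Phi\inv(\alpha)/T$ is just a more explicit unpacking of the paper's one-line observation that $Y$ is tall exactly if some neighbourhood of $\{[t,0,0]\}$ in $Y$ is tall.
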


\begin{proof}
By Lemma~\ref{lemma:Y},
a complexity one model $Y$ is tall
exactly if there exists a neighbourhood of $\{[t,0,0]\}$ in $Y$
that is tall.
Hence, the claim follows immediately from the local normal form theorem.
\end{proof}

\begin{Corollary} \labell{discrete}
Let $(S,\pi)$ be a tall skeleton over an open subset $\calT$ of $\t^*$.
Then $\pi\inv(\alpha)$ is finite for every $\alpha \in \calT$.
\end{Corollary}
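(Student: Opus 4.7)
The plan is to combine two observations: compactness of the preimage (from properness of $\pi$) and discreteness of the preimage (from the local model structure). Together these give finiteness.

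First, since $\pi \colon S \to \calT$ is proper and $\{\alpha\}$ is compact, $\pi\inv(\alpha)$ is a compact subset of $S$. So the whole task reduces to showing that $\pi\inv(\alpha)$ has no accumulation points in $S$, i.e., that every $s \in \pi\inv(\alpha)$ has a neighborhood $V_s$ with $V_s \cap \pi\inv(\alpha) = \{s\}$.

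The key step is the discreteness, and here I would argue as follows. By the definition of a tall skeleton, a given $s \in \pi\inv(\alpha)$ has a neighborhood $V$ together with a label-respecting homeomorphism $\Psi \colon V \to U \subset M_\exc$ satisfying $\Phibar \circ \Psi = \pi$, where $U$ is an open subset of the exceptional orbits of some tall complexity one Hamiltonian $T$-manifold $(M,\omega,\Phi)$. So it suffices to show that $\overline\Phi$ is injective on $U$ after shrinking. Apply the local normal form theorem to the orbit $x \in M$ corresponding to $\Psi(s)$: an invariant neighborhood of $x$ is equivariantly symplectomorphic to an invariant open subset of the complexity one model $Y = T \times_H \C^{h+1} \times \h^0$ corresponding to $s$ (which is tall by Corollary~\ref{tall models}). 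Now invoke Lemma~\ref{lemma:Y}: there is a homeomorphism $Y/T \to (\image \Phi_Y) \times \C$ whose first component is $\overline{\Phi_Y}$ and whose second component takes every exceptional orbit to $0 \in \C$. Consequently, on the set of exceptional orbits in $Y/T$, this homeomorphism is simply $[y] \mapsto (\overline{\Phi_Y}([y]), 0)$, so $\overline{\Phi_Y}$ is injective on the exceptional orbits of $Y$. Pulling this back to $M$ via the local normal form and then to $S$ via $\Psi$, we obtain a neighborhood $V_s \subset V$ of $s$ on which $\pi$ is injective; in particular $V_s \cap \pi\inv(\alpha) = \{s\}$.

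The two observations together finish the proof: $\pi\inv(\alpha)$ is compact and discrete, hence finite.

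I do not expect any real obstacle. The one subtle point is to confirm that the exceptional orbits in the model $Y$ map injectively to the moment image — but this is immediate from the second component of the homeomorphism in Lemma~\ref{lemma:Y} being zero on them. Everything else is bookkeeping with the local normal form theorem and the definitions of ``skeleton'' and ``proper map.''
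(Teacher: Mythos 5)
Your proof is correct and follows essentially the same route as the paper's: properness of $\pi$ plus local discreteness of $\pi\inv(\alpha)$ coming from the local normal form theorem and Lemma~\ref{lemma:Y} (via the homeomorphism $Y/T \to (\image \Phi_Y)\times\C$ that sends exceptional orbits into $(\image\Phi_Y)\times\{0\}$). The only point worth making explicit is that the second bullet of Lemma~\ref{lemma:Y} applies because a tall complexity one model cannot have a proper moment map (otherwise $\Phi_Y\inv(\alpha)$ would be a single orbit and the reduced space a point), which follows from Corollary~\ref{tall models} as you noted.
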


\begin{proof}
Let $s$ be a point in $S$ and let $Y = T \times_H \C^{h+1} \times \h^0$
be the corresponding complexity one model, 
which is tall by Corollary~\ref{tall models}.
By the local normal form theorem and Lemma~\ref{lemma:Y} 
there exists a neighbourhood of $s$ in $S$
whose intersection with $\pi\inv(\alpha)$
consists of the single element set $\{ s \}$.
The result then follows from the properness of $\pi$.
\end{proof}

\section*{Part I: Reconstruction}

{

\section{Topology of complexity one quotients}
\labell{sec:topology}

In this section, we prove two results about the topology
of complexity one quotients which we will need in order to prove
the main propositions in Sections~\ref{sec:lifting} 
and~\ref{sec:gluing}.  For future reference, whenever possible
we will allow complexity one spaces that are not tall.

For a topological space $X$ and a presheaf $\cS$ of abelian groups on $X$,
we let $\check{H}^i(X,\cS)$ denote the \v{C}ech cohomology of $\cS$.
If $X$ is paracompact\footnote{We adopt the convention that, by definition,
every paracompact space is Hausdorff.}, this agrees with the 
\v{C}ech cohomology 
$\check{H}^i(X,\cS^+)$ of the sheafification $\cS^+$ of $\cS$
and with the sheaf cohomology $H^i(X,\cS^+)$ of $\cS^+$ that is defined
through derived functors. 
Voit Th\'eor\`eme 5.10.1 et le Corollaire
de Th\'eor\`eme 5.10.2 de~\cite[chapitre II]{godement}.

Consider a continuous map of topological spaces, $\Phibar \colon Q \to B$. 
Given an abelian group $A$ and a non-negative integer $i$,
define a presheaf $\fH_A^i$ on $B$ by
$$ \fH_A^i(U) = \check{H}^i \big( \Phibar\inv(U);A) \quad 
 \mbox{for each open set } U \subset B. $$
Note that $\fH_A^i(\emptyset) = \{ 0 \} $.
This presheaf is the push-forward  by
$\Phibar \colon Q \to B$ of the presheaf on $Q$
that associates to each open set $W \subset Q$
the group $\check{H}^i(W;A)$.
In general, neither presheaf is a sheaf.  

\begin{Proposition} \labell{prop zero coh}
Let $Q$ be a topological space, $\calT$  be an open subset of~$\ft^*$, and 
$ \Phibar \colon Q \to \calT $
be a continuous map such that 
$\Delta = \image \Phibar$ 
is convex.
Assume that for every point in $\calT$ there exists a convex neighbourhood $U$
in $\calT$, a complexity one space $(M_U,\omega_U,\Phi_U,U)$, 
and a homeomorphism from $\Phibar\inv(U)$ to $M_U/T$ 
that carries $\Phibar|_{\Phibar\inv(U)}$ 
to the map $\Phibar_U \colon M_U/T \to U$ induced  by $\Phi_U$.
Then for any abelian group $A$,
$$
   \check{H}^i \big( \calT,\fH_A^0 \big) = 
   \check{H}^i \big( \calT,\fH_A^1 \big) = 0 
   \quad \text{ for all }   i > 0 . 
$$
Moreover, if at least one of the spaces $M_U$ is not tall, then 
$$ \check{H}^0 \big( \calT,\fH_A^2 \big) = 0.$$
\end{Proposition}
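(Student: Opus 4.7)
The plan is to use paracompactness of $\calT$ to identify the \v{C}ech cohomology of the presheaves $\fH_A^k$ with the sheaf cohomology of their sheafifications, then analyze these sheafifications via the local structure of complexity one spaces and the contractibility of $\Delta$. Since $\calT$ is open in Euclidean space and hence paracompact, the cited result gives $\check{H}^i(\calT, \fH_A^k) = H^i(\calT, (\fH_A^k)^+)$. To determine the stalks of $(\fH_A^k)^+$ I would take a basis of sufficiently small convex neighbourhoods $U$ of each $\alpha \in \calT$ and apply Proposition~\ref{trivialize M mod T} (when $M_U$ is tall) or Lemma~\ref{lemma:Y} (at a short point $\alpha \in \Delta_\short$, whose local model has proper moment map). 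This gives: empty $\Phibar^{-1}(U)$ if $\alpha \notin \Delta$; contractible $\Phibar^{-1}(U)$ if $\alpha \in \Delta_\short$ (since the single-orbit condition at $\alpha$ forces the preimage of small $U$ to lie in the cone-like neighbourhood of the central orbit in the local model); and $\Phibar^{-1}(U) \cong (U \cap \Delta) \times \Sigma_\alpha$ for a closed oriented surface $\Sigma_\alpha$ if $\alpha \in \Delta_\tall$ (shrinking $U$ so that $U \cap \Delta \subset \Delta_\tall$, using that $\Delta_\short$ is closed, and then invoking Proposition~\ref{trivialize M mod T}). Hence the stalks of $(\fH_A^k)^+$ are zero off $\Delta$, equal to $A$ at short points for $k=0$ and zero there for $k \geq 1$, and equal to $H^k(\Sigma_\alpha;A)$ at tall points.

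For $k = 0$, the sheafification is naturally $i_* A_\Delta$ for the closed inclusion $i \colon \Delta \hookrightarrow \calT$, so $H^i(\calT, i_* A_\Delta) = H^i(\Delta, A) = 0$ for $i > 0$ by contractibility of the convex $\Delta$. For $k = 1$, if every $M_U$ is tall the same argument works with coefficient $H^1(\Sigma; A)$. If some $M_U$ is non-tall, I would argue $(\fH_A^1)^+ \equiv 0$: at any short point $\alpha_0$ the local model's reduction at a nearby regular value is $\Phi_H^{-1}(\beta)/H$, a connected compact $2$-dimensional symplectic manifold on which the $1$-dimensional residual torus $T/H$ acts in Hamiltonian fashion, and hence must be $S^2$. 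The same conclusion holds for the global reduced surface at tall $\alpha$ close to $\alpha_0$ (by the local normal form, as the global moment preimage lies inside the normal-form neighbourhood for $\alpha$ sufficiently near $\alpha_0$). By local constancy of the genus on the connected open dense subset $\Delta_\tall$ (connected because $\Delta_\short$ is a proper union of closed faces of the convex $\Delta$, by Corollary~\ref{dichotomy}), every reduced surface over $\Delta_\tall$ is $S^2$, so $H^1(\Sigma_\alpha; A) = 0$.

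Finally, for $\check{H}^0(\calT, \fH_A^2) = 0$ when some $M_U$ is non-tall: a global section $s$ of $(\fH_A^2)^+$ has zero stalk at every short point, and by local representability it coincides on a small convex $V$ around any short point with an element of $\fH_A^2(V) = H^2(\Phibar^{-1}(V); A) = 0$ (the preimage being contractible). Hence $s$ vanishes on $V$, including at any tall points there; since a short point exists by hypothesis and $\Delta_\tall$ is connected with $s$ locally constant on it (by the product structure on tall $M_U/T$'s), this propagates to $s \equiv 0$ throughout $\calT$. The main obstacle will be the genus-zero rigidity for $k = 1$ --- justifying carefully via the local-model reduction that any short point forces nearby reduced surfaces to be $S^2$, and then propagating this via local constancy along the connected tall locus --- as well as the sheaf-theoretic identifications (particularly showing $(\fH_A^0)^+ \cong i_* A_\Delta$ cleanly and ensuring \v{C}ech cohomology of the presheaf agrees with sheaf cohomology of its sheafification on the paracompact $\calT$).
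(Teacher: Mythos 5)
Your proof is correct and follows the same overall strategy as the paper: pass from \v{C}ech cohomology of the presheaves $\fH_A^k$ to sheaf cohomology of their sheafifications (via paracompactness), show that the sheafifications are (locally) constant sheaves supported on the convex hence contractible set $\Delta$, and in the non-tall case use a genus-zero rigidity argument to kill $(\fH_A^1)^+$ and the global sections of $(\fH_A^2)^+$.

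The differences are worth recording, since they are genuine variations rather than just changes of wording. First, the paper passes to presheaves on $\Delta$ and computes there, whereas you stay on $\calT$ and use the closed inclusion $i \colon \Delta \hookrightarrow \calT$ (with $i_*$ exact for closed embeddings); both are fine. Second, the paper cites Lemma~5.7 of~\cite{locun} for the facts that (a) a short point has arbitrarily small neighbourhoods whose preimages in $M/T$ are contractible, and (b) the regular reduced spaces near a short point are spheres. You reprove~(b) directly via the residual effective Hamiltonian circle action of $G/T \cong (S^1)^{h+1}/H$ on the local reduction (your ``$T/H$'' should be $(S^1)^{h+1}/H$, or equivalently $G/i_T(T)$ --- the quotient $T/H$ does not act on the reduction by $T$). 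For~(a), which you need both to compute stalks at short points and in the $\fH_A^2$ step, your justification (``cone-like'') is noticeably sketchier than the cited lemma; for small convex $U$ near a short $\alpha_0$ the preimage $\Phibar\inv(U)$ lies in the model quotient $Y/T$, but contractibility of $\Phi_Y\inv(U)/T$ for \emph{all} small convex $U$ is not automatic --- the cited lemma only asserts the existence of arbitrarily small such $V$, which is what you actually need and should state. Third, the paper propagates genus zero by connectedness of $\Delta$ (plus $\Delta_\short \neq \emptyset$ and density of regular values), whereas you propagate via connectedness of $\Delta_\tall$; your argument for this connectedness (that $\Delta_\short$ lies in $\partial\Delta$ so $\mathrm{int}(\Delta) \subseteq \Delta_\tall \subseteq \Delta = \ol{\mathrm{int}(\Delta)}$) is correct but appeals to Corollary~\ref{dichotomy}, which is stated for a single complexity one space; here you only have local pieces $M_U$, so you should apply Remark~\ref{rmk:short} to each $M_U$ and note that the conclusion patches across the $U$'s. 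None of these are serious gaps, but the contractibility step and the citation for the face structure of $\Delta_\short$ deserve to be tightened.
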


\begin{proof}
We first show that $Q$ is paracompact. Let $\fW$ be an arbitrary open covering
of $Q$.  There exists a locally finite covering $\nu$ of $\calT$
by open balls such that every $B \in \nu$, the preimage
$\Phibar\inv(\ol{B})$ of the closure $\ol{B}$ of $B$ is compact.
For each $B \in \nu$, let $\fW_B \subset \fW$ be a finite subset
that covers $\Phibar\inv(\ol{B})$; then 
$$ \bigcup\limits_{B \in \nu} 
   \left\{ W \cap f\inv(B) \ | \ W \in \fW_B \right\} $$
is a locally finite open refinement of $\fW$ that covers $Q$.

The map $\Phibar \colon Q \to \Delta \subset \calT$ induces 
presheaves $\fH_A^j$ 
on $\Delta$ and $\calT$.
Moreover, since $\fH_A^j(U) = \fH_A^j(U \cap \Delta)$ for all open $U \subset \calT$,
we have $$\check{H}^i \big(\calT,\fH_A^j \big) = 
\check{H}^i \big(\Delta,\fH_A^j \big) \quad \mbox{for all } i \mbox{ and } j.$$

Let $(\fH_A^j)^+$ denote the sheafification of the presheaf 
$\fH_A^j$ on $\Delta$.  
Because the \v{C}ech cohomology of a presheaf on a paracompact
space is equal to that of its sheafification,
it is enough to prove that
$$ \check{H}^i \big( \Delta,(\fH_A^0 )^+ \big) 
 = \check{H}^i \big( \Delta,(\fH_A^1 )^+ \big) = 0
 \quad \text{ for all } i > 0  $$
and that, if at least one of the spaces $M_U$ is not tall, then 
$$\check{H}^0 \big( \Delta,(\fH_A^2)^+ \big) = 0.$$

Assume first
that all of the complexity one spaces $M_U$ are tall.
By Proposition \ref{trivialize M mod T},  this implies that
for every  point in $\calT$ there exists a convex neighbourhood $U$
in $\calT$,  a surface $\Sigma$,
and a function $f \colon \Phibar\inv(U) \to \Sigma$, such that 
$$ \big( \Phibar,f \big) \colon \Phibar\inv(U) 
   \to (\Delta \cap U) \times \Sigma $$
is a homeomorphism.
Hence, $(\fH_A^0)^+$ is a constant sheaf and $(\fH_A^j)^+$ is 
a locally constant sheaf for all $j > 0$.
Since $\Delta$ is convex, it is contractible; thus
$\check{H}^i \big( \Delta,(\fH_A^j)^+ \big) = \{ 0 \}$ 
for all $j$ and all $i > 0$.

Next, assume that at least one of the complexity
one spaces $M_U$ is not tall.
Let $\Delta_\tall$ denote the set of $\alpha \in \Delta$
such that $\Phibar\inv(\alpha)$ is a connected
two dimensional oriented topological manifold;
let $\Delta_\short = \Delta \ssminus \Delta_\tall$.
Corollary~\ref{dichotomy} implies that $\Delta_\tall$ is open in $\Delta$ and 
$\Phibar\inv(\alpha)$ is a single point for all $\alpha \in \Delta_\short$.

By assumption, for every point in $\calT$ there exists a convex 
neighbourhood $U$, a complexity one space $(M_U,\omega_U,\Phi_U,U)$, 
and a homeomorphism from $\Phibar\inv(U)$ to $M_U/T$ 
that carries $\Phibar|_{\Phibar\inv(U)}$ 
to the map $\Phibar_U \colon M_U/T \to U$ induced by $\Phi_U$.
In fact, the convexity package 
implies that the preimage $\Phi_U\inv(V) $ is connected for any convex subset 
$V \subset U$; see page~\pageref{page:convexity}.
Hence, the neighbourhood $U$ can be chosen to be arbitrarily small.

In particular,
every $\alpha \in \Delta$ has arbitrarily small neighbourhoods
whose pre-images in $Q$ are connected. 
Hence, $(\fH_A^0)^+$ is a constant sheaf.
Since $\Delta$ is convex, this implies that
$\check{H}^i\big(\Delta,(\fH_A^0)^+\big) = 0$ for all $i > 0$.

The following result is proved in~\cite[Lemma~5.7]{locun}:
\begin{equation} \labell{5.7}
\begin{minipage}{4in}
Let $(M,\omega,\Phi,U)$ be a complexity one space.
Suppose that $\Phi\inv(\alpha)$ consists of a single orbit.
Then every neighbourhood of $\alpha$ contains a smaller neighbourhood $V$ 
such that the quotient $\Phi\inv(V)/T$ is contractible.
Moreover, every regular non-empty symplectic quotient $\Phi\inv(y)/T$
in $\Phi\inv(V)/T$ is homeomorphic to a $2$-sphere.
\end{minipage}
\end{equation}

By Proposition~\ref{trivialize M mod T},
the genus
of the reduced space is locally constant on $\Delta_\tall$.
Hence, since regular values are dense,
\eqref{5.7} implies that this genus is zero for all two dimensional
reduced spaces over a neighbourhood of $\Delta_\short$.
Since $\Delta$ is connected and $\Delta_\short$ is not empty,
this implies that every two dimensional reduced space has genus zero.
Hence, by Proposition~\ref{trivialize M mod T}
and~\eqref{5.7},  $(\fH_A^1)^+$ is the zero sheaf.
Therefore, $\check{H}^i\big(\Delta,(\fH_A^1)^+\big) = 0$ for all $i > 0$.

Finally, consider a global section
$\gamma \in \check{H}^0 \big(\Delta,(\fH_A^2)^+ \big)$.
By~\eqref{5.7}, the support of $\gamma$
is a subset of $\Delta_\tall$.
Therefore,
since the restriction of $(\fH_A^2)^+$ to $\Delta_\tall$ is
a locally constant sheaf by Proposition~\ref{trivialize M mod T},
the support of $\gamma$ is  an open and closed subset of $\Delta_\tall$.
Since $\Delta$ is connected and $\Delta_\short$
is non-empty, this implies that $\gamma = 0$.
Thus, $\check{H}^0\big(\Delta,(\fH_A^2)^+\big) = 0$.
\end{proof}

\begin{Proposition} \labell{prop121}
Let $(M,\omega,\Phi,\calT)$ be a complexity one space.
The restriction map $H^2(M/T;\Z) \to H^2 \big(\Phi\inv(y)/T;\Z \big)$
is one-to-one
for each $y \in \image \Phi$.
\end{Proposition}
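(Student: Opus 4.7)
The plan is to split on the short/tall dichotomy of Corollary~\ref{dichotomy} and dispatch each case with previously established machinery. If $(M,\omega,\Phi,\calT)$ is tall, then Proposition~\ref{trivialize M mod T} furnishes a closed oriented surface $\Sigma$ and a homeomorphism $(\Phibar,f)\colon M/T \to (\image\Phi) \times \Sigma$. Since $\image\Phi$ is convex (hence contractible) by the convexity package, the projection to $\Sigma$ is a homotopy equivalence, so $H^2(M/T;\Z) \cong H^2(\Sigma;\Z)$ and the restriction to any fiber $\Phi\inv(y)/T = \{y\}\times\Sigma$ is the identity. This half of the argument requires no spectral sequence machinery.

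If instead $M$ is not tall, then I would invoke Proposition~\ref{prop zero coh} with $Q = M/T$ itself, taking as the required local complexity one models the restrictions of $(M,\omega,\Phi)$ to convex open subsets $U \subset \calT$. Because $M$ itself is not tall, the hypothesis concerning a non-tall $M_U$ is met, so the proposition yields $\check{H}^p(\calT, (\fH_\Z^q)^+) = 0$ for all bidegrees $(p,q)$ with $p + q = 2$, namely $(2,0)$, $(1,1)$, and $(0,2)$. Feeding these vanishings into the Leray (\v{C}ech-to-sheaf) spectral sequence
$$E_2^{p,q} = \check{H}^p(\calT,(\fH_\Z^q)^+) \ \Longrightarrow\ H^{p+q}(M/T;\Z)$$
forces $H^2(M/T;\Z) = 0$, so the restriction map is trivially injective, regardless of which $y$ is chosen.

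The principal technical point to verify is the availability of the Leray spectral sequence in this purely topological setting, which requires paracompactness of $M/T$ so that \v{C}ech cohomology of the pushforward presheaves agrees with sheaf cohomology of their sheafifications; this paracompactness is established in the opening paragraph of the proof of Proposition~\ref{prop zero coh}, via a locally finite cover of $\calT$ by balls with compact preimages. A secondary bookkeeping point is to confirm that the hypothesis of Proposition~\ref{prop zero coh} is genuinely satisfied by $(M,\omega,\Phi,\calT)$ itself, which amounts to observing that the restriction of a complexity one space to a convex open subset of $\calT$ is again a complexity one space (properness of the restricted moment map follows from properness of $\Phi$). With these points in hand, both cases reduce to immediate consequences of the already-proven results, so no genuinely new difficulty arises.
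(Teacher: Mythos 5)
Your proposal is correct and follows essentially the same route as the paper's proof: handle the tall case by Proposition~\ref{trivialize M mod T} (the paper calls this an "immediate consequence," which you simply spell out), and handle the non-tall case by combining the Leray spectral sequence for $\Phibar \colon M/T \to \calT$ with the vanishing from Proposition~\ref{prop zero coh} to conclude $H^2(M/T;\Z) = 0$.
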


\begin{proof}
If the complexity one space is tall, this proposition 
is an immediate consequence of Proposition \ref{trivialize M mod T}.
So assume that it is not tall.
Let $\Phibar \colon M/T \to \calT$ be the map
induced by $\Phi$.  
Then there is the Leray spectral sequence 
converging to $H^*(M/T;\Z)$
with
$$ E_2^{i,j} = \check{H}^i \big(\calT,\fH_\Z^j \big); $$
see \cite[chap.~II, Thm.~4.17.1]{godement}.
By Proposition \ref{prop zero coh}, $E_2^{i,j} = 0$ 
for all $i$ and $j$ such that $i+j = 2$.
Consequently, $H^2(M/T;\Z) = 0$.
\end{proof}

}

\section{Lifting from the quotient}
\labell{sec:lifting}

An important step in gluing together local pieces of complexity
one spaces is to glue them together as $T$-manifolds.  To carry this
out, which we will do in this section,
we need a notion of diffeomorphisms of quotient spaces.

Let a compact torus $T$ act on a manifold $N$.
The quotient $N/T$ can be given
a natural differential structure,
consisting of the sheaf of real-valued functions
whose pullbacks to $N$ are smooth.\footnote{
This notion of a differential structure on quotient spaces
was used by Schwarz \cite{schwarz:IHES}.
An axiomatization of ``differential structure" appeared
in \cite{sikorski}.}
We say that a map $h \colon N/T \to N'/T$ is \textbf{smooth}
if it pulls back smooth
functions to smooth functions; it is a \textbf{diffeomorphism}
if it is smooth and has a smooth inverse.
If $N$ and $N'$ are oriented, the choice of an orientation on $T$
determines orientations on the smooth part of $N/T$ and $N'/T$.
Whether or not a diffeomorphism $f\colon N/T \to N'/T$ preserves orientation
is independent of this choice.

We now recall several definitions from~\cite{locun}.

\begin{Definition}
Let a torus $T$ act on oriented manifolds $M$ and $M'$ with
$T$-invariant maps $\Phi \colon M \to \t^*$ and $\Phi' \colon M' \to \t^*$.
A \textbf{$\bs \PhiT$-diffeomorphism} from $(M,\Phi)$ to  $(M',\Phi')$
is an orientation preserving equivariant diffeomorphism
$f \colon M \to M'$ that satisfies $\Phi' \circ f = \Phi$.
\end{Definition}

\begin{Definition} \labell{Phi diffeo}
Let $(M,\omega,\Phi,\calT)$ and $(M',\omega',\Phi',\calT)$
be complexity one Hamiltonian $T$-manifolds.
A $\mathbf{\Phi}$\textbf{-diffeomorphism} from $M/T$ to $M'/T$
is an orientation preserving diffeomorphism  $f \colon M/T \to M'/T$
such that ${\Phibar}' \circ f = \Phibar$, and such that
 $f$ and $f\inv$ lift to  $\PhiT$-diffeomorphisms
in a neighbourhood of each exceptional orbit.
Here, $\Phibar$ and ${\Phibar}'$ are induced by the moment maps.
\end{Definition}

We now state the main result of this section.

\begin{Proposition} \labell{glue lifts}
Let $\calT \subset \t^*$ be an open subset,
$\Delta \subset \calT$ a convex subset,
and $\rho \colon \Delta \to \R_{> 0}$ a function.
Let $\fU$ be a cover of $\calT$ by convex open sets.
For each $U \in \fU$, let $(M_U,\omega_U,\Phi_U)$ be
a complexity one space over $U$ with $\image \Phi_U = U \cap \Delta$
and \DH function $\rho|_U$.
For each $U$ and $V$ in $\fU$, let
$$f_{UV} \colon M_V|_{U \cap V} / T \to M_U|_{U \cap V} / T$$
be a $\Phi$-diffeomorphism,
such that $f_{UV} \circ f_{VW} = f_{UW}$
on $M_W|_{U \cap V \cap W} / T$ for all $U,V,W \in \fU$.
Then, after possibly passing to a refinement of the cover,
there exist $\PhiT$-diffeomorphisms 
$g_{UV} \colon M_V|_{U \cap V} \to M_U|_{U \cap V}$
that lift $f_{UV}$ and such that $g_{UV} \circ g_{VW} = g_{UW}$
on $M_W|_{U \cap V \cap W}$ for all $U,V,W \in \fU$.
\end{Proposition}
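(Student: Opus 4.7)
I prove the result in two stages: (i) find $\PhiT$-lifts $g_{UV}$ individually over a sufficiently fine refinement of $\fU$; (ii) modify these lifts by gauge transformations so the cocycle condition $g_{UV} \circ g_{VW} = g_{UW}$ holds.

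\textbf{Stage (i).} Near each exceptional orbit, the definition of a $\Phi$-diffeomorphism supplies a local $\PhiT$-lift of $f_{UV}$. Over the principal locus, $\PhiT$-lifts of $f_{UV}$ on an open set $W$ correspond to isomorphisms of principal $T$-bundles $M_V|_W \cong f_{UV}^* M_U|_W$. The obstruction to such a global isomorphism over $O := U \cap V$ lies in $H^2(M_V|_O/T;\underline{\ell})$ and is measured by the difference of the first Chern classes. Crucially, as in Example~\ref{associated}, the first Chern class of the associated $T$-bundle over the base of a complexity one space controls the slope of the DH density on the principal locus. Since $M_V$ and $M_U$ share the DH function $\rho|_O$ and $f_{UV}$ preserves $\Phi$, their Chern classes agree, so the obstruction vanishes after refining $\fU$ so that each $O$ is small enough that $M_V|_O/T$ deformation retracts to a single reduced space. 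A patching argument combining the local $\PhiT$-lifts at exceptional orbits with the global bundle isomorphism on the principal locus produces $g_{UV}$.

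\textbf{Stage (ii).} Set $h_{UVW} := g_{UV} \circ g_{VW} \circ g_{UW}\inv$ on $M_W|_{U\cap V\cap W}$; this is a $\PhiT$-automorphism covering the identity, hence multiplication by a $T$-valued gauge function (modulo stabilizers). The $h_{UVW}$ form a \v{C}ech $2$-cocycle with values in the sheaf $\cS$ of such gauge functions on the quotients, pushed forward to $\calT$. I present $\cS$ by an exponential-type short exact sequence $0 \to \underline{\ell} \to \underline{\ft}_0 \to \cS \to 0$, where $\underline{\ft}_0$ is a soft sheaf of smooth $\ft$-valued functions that at each orbit take values in the stabilizer Lie subalgebra. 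The associated long exact sequence reduces $\check H^2(\calT;\cS)$ to a combination of terms $\check H^p(\calT;\fH^q_\ell)$ with $p+q=3$. The vanishing provided by Proposition~\ref{prop zero coh} (unconditional for $p > 0$; the $p=0$ case handled by a further refinement, or by invoking Proposition~\ref{trivialize M mod T} in the tall case) shows the cocycle is a coboundary, $h_{UVW} = \sigma_{VW}\,\sigma_{UW}\inv\,\sigma_{UV}$ for sections $\sigma_{UV}$ of $\cS$. Replacing $g_{UV}$ by $\sigma_{UV}\inv \cdot g_{UV}$ yields the desired lifts.

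\textbf{Main obstacle.} The key technical challenge in Stage (i) is to exploit the matching DH function to annihilate the first-Chern-class obstruction to globalizing local bundle isomorphisms -- without the hypothesis that the $\rho|_U$'s fit together, no such lift would exist. In Stage (ii) the gauge sheaf $\cS$ has stalks that jump at exceptional orbits, so its cohomology cannot be computed directly; the correct strategy is a soft resolution whose kernel is the locally constant sheaf $\underline{\ell}$, reducing the problem to the $\fH^j_\ell$-cohomology that Section~\ref{sec:topology} was designed to control.
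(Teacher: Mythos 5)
Your two-stage plan -- construct individual $\PhiT$-lifts $F_{UV}$, then correct the cocycle failure by a sheaf-cohomological argument -- is the same architecture as the paper's proof. The differences are in the implementation, and one of them is a genuine gap.

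For Stage~(i), the paper bypasses any explicit Chern-class computation: it quotes Lemma~\ref{global lift}, which follows from Lemma~4.11 of \cite{locun} together with Proposition~\ref{prop121}. Your Chern-class heuristic is morally the content of those references, and the observation that matching \DH functions kill the obstruction is right; however, the extra refinement to make $M_V|_O/T$ deformation retract to a single reduced space is unnecessary -- Proposition~\ref{prop121} supplies the needed injectivity $H^2(M/T;\Z) \hookrightarrow H^2(\Phi\inv(y)/T;\Z)$ globally over each convex $U\cap V$, so no further shrinking is required. Also, ``a patching argument combining the local lifts at exceptional orbits with the bundle isomorphism on the principal locus'' is exactly the nontrivial step that Lemma~4.11 of \cite{locun} carries out; as written your sketch does not supply it.

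Stage~(ii) has a real error in the sheaf-theoretic setup. You present the sheaf $\cS$ of gauge automorphisms (the ``$\PhiT$-automorphisms covering the identity'') by a sequence $0 \to \underline{\ell} \to \underline{\ft}_0 \to \cS \to 0$ where $\underline{\ft}_0$ consists of $\ft$-valued functions constrained to lie in the stabilizer subalgebra. But such a function exponentiates to a $T$-valued function with values in the stabilizer, and multiplication by a stabilizer-valued function acts \emph{trivially}; so the map $\underline{\ft}_0 \to \cS$ is zero, not surjective, and your short exact sequence does not exist. What the paper does instead is work with the full sheaf $\t^\infty$ and the genuine exponential sequence $0 \to \ell \to \t^\infty \to T^\infty \to 1$. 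The bridge back to gauge automorphisms is Theorem~\ref{schwarz} (Haefliger--Salem), which lifts each $F_{UV}F_{VW}F_{UW}\inv$ to an actual $T$-valued invariant smooth function; this lift is unique because the free locus is dense in a faithful complexity one action, so the cocycle condition~\eqref{h is closed} holds on the nose in $T^\infty$, and the ``modulo stabilizers'' quotient never needs to be introduced. Once one uses the correct sequence, Lemma~\ref{H2 is zero} reduces $\check H^2(\calT,\fH^0_{T^\infty})$ to $\check H^3(\calT,\fH^0_\ell)$ and $\check H^2(\calT,\fH^1_\ell)$, both of which vanish by Proposition~\ref{prop zero coh} unconditionally (both in degree $p>0$); your mention of a $p=0$ case and of Proposition~\ref{trivialize M mod T} is spurious here -- no $\fH^2$ or $\fH^3$ term with $p=0$ appears.
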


Under the assumptions of Proposition \ref{glue lifts},
let $Q$ denote the topological space obtained
from the disjoint union $\bigsqcup_{U \in \fU} M_U / T$
by identifying $x$ with  $f_{UV} (x)$  for all 
$U$ and $V$ in $\fU$
and all $x \in M_V|_{U\cap V}/T$.
 Let
$$ \Phibar \colon Q \to \calT $$
denote the map induced by the moment maps. 
As in the proof of Proposition~\ref{prop zero coh}, $Q$ is paracompact.

We define a differential structure on $Q$ by declaring a real-valued
function to be smooth if it lifts to a smooth function on each $M_U$;
notice that this is well defined. 
We can use smooth partitions of unity on the spaces $M_U$ and $\calT$
to construct smooth partitions of unity on $Q$.

For any abelian Lie group $A$, 
let $A^\infty$ denote the sheaf of smooth functions to $A$.
Let $\calH^i_{A^\infty}$ denote the presheaf on $\calT$ 
which associates the group 
$\check{H}^i(\Phibar\inv(U);A^\infty)$ to each open set $U \subset \calT$.
We will need the following lemma:

\begin{Lemma} \labell{H2 is zero}
In the above situation,
$$ \check{H}^2( \calT , \fH^0_{T^\infty}) = 0 .$$
\end{Lemma}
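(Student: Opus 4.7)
The plan is to deduce the lemma from Proposition~\ref{prop zero coh} via the exponential short exact sequence of sheaves on $Q$,
$$0 \to \ell \to \t^\infty \to T^\infty \to 0,$$
where $\ell$ denotes the constant sheaf with stalk the integral lattice and the second arrow is the pointwise exponential map. This sequence is exact as sheaves of abelian groups because $\exp\colon \t \to T$ is a local diffeomorphism, so any smooth $T$-valued function on a small enough open subset of $Q$ lifts smoothly to $\t$.

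By the paragraph preceding the lemma, $Q$ admits smooth partitions of unity, so $\t^\infty$ is a fine sheaf on $Q$ and in particular $\check{H}^q(\Phibar\inv(U),\t^\infty) = 0$ for every open $U \subset \calT$ and every $q \geq 1$. Feeding this into the long exact cohomology sequence on each $\Phibar\inv(U)$ and letting $U$ vary, one obtains the four-term exact sequence of presheaves on $\calT$,
$$0 \to \fH^0_\ell \to \fH^0_{\t^\infty} \to \fH^0_{T^\infty} \to \fH^1_\ell \to 0,$$
which I would split into the two short exact sequences
$$0 \to \fH^0_\ell \to \fH^0_{\t^\infty} \to K \to 0 \quad \text{and} \quad 0 \to K \to \fH^0_{T^\infty} \to \fH^1_\ell \to 0,$$
where $K$ is the image of $\fH^0_{\t^\infty}$ inside $\fH^0_{T^\infty}$.

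To finish I would take \v{C}ech cohomology on $\calT$ and invoke three vanishing statements for $i \geq 1$. First, $\check{H}^i(\calT,\fH^0_\ell) = 0$ and $\check{H}^i(\calT,\fH^1_\ell) = 0$ by Proposition~\ref{prop zero coh} applied to the abelian group $A = \ell$. Second, $\check{H}^i(\calT,\fH^0_{\t^\infty}) = 0$ because the sheafification of $\fH^0_{\t^\infty}$ is a module over the sheaf of smooth functions on $\calT$ (acting by pullback through $\Phibar$), which is fine since $\calT$ is paracompact; consequently $\fH^0_{\t^\infty}$ is itself fine. The long exact sequence associated to the first short exact sequence above then gives $\check{H}^i(\calT,K) = 0$ for $i \geq 1$, and the long exact sequence associated to the second yields $\check{H}^2(\calT,\fH^0_{T^\infty}) = 0$.

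The only point requiring any care is the legitimacy of these manipulations at the presheaf level: this is standard since $\calT$ is paracompact, so \v{C}ech cohomology of a presheaf agrees with sheaf cohomology of its sheafification and sheafification is an exact functor. Granted this, the argument is a short diagram chase combining the exponential sequence with Proposition~\ref{prop zero coh}; the main conceptual input is recognizing that $\t^\infty$ is fine on $Q$ and that $\fH^0_{\t^\infty}$ inherits fineness over $\calT$.
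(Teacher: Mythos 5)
Your proposal is correct and follows essentially the same route as the paper: the exponential sequence $0 \to \ell \to \t^\infty \to T^\infty \to 1$ on $Q$, the vanishing $\check{H}^i(\calT,\fH^0_{\t^\infty})=0$ from fineness, the splitting into two short exact sequences through the image presheaf $K$ (the paper calls it $\kappa$), and Proposition~\ref{prop zero coh} applied to $\fH^0_\ell$ and $\fH^1_\ell$. The only cosmetic difference is that you conclude $\check{H}^i(\calT,K)=0$ outright and then read off the answer from the second sequence, whereas the paper records the isomorphism $\check{H}^i(\calT,\kappa)\cong\check{H}^{i+1}(\calT,\fH^0_\ell)$ and invokes Proposition~\ref{prop zero coh} only at the very end; the two bookkeeping styles are logically equivalent.
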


\begin{proof}
Every short exact sequence
of sheaves on $Q$ gives rise to a long exact sequence
in \v{C}ech cohomology.  
Therefore,
the short exact sequence of  sheaves on $Q$,
$$ 0 \to \ell \to \t^\infty \to T^\infty \to 1,$$
gives rise to a long exact sequence of presheaves on $\calT$
\begin{equation} \labell{exact of calH}
 0 \to \calH^0_\ell \to \calH^0_{\t^\infty} 
\to \calH^0_{T^\infty} 
\to \calH^1_\ell \to  \calH^1_{\t^\infty} \to \cdots .
\end{equation}

Because the sheaf $\t^\infty$ is fine, 
$\check{H}^1(W,\t^\infty) = 0$ 
for all open sets $W \subset Q$. 
Hence, $\calH^1_{\t^\infty}$ is the zero presheaf.
Thus \eqref{exact of calH}
breaks up into two short exact sequences of presheaves,
$$
   0 \to \calH^0_\ell \to \calH^0_{\t^\infty} \to \kappa \to  0
\quad \text{ and } \quad
   0 \to \kappa \to \calH^0_{T^\infty} 
     \to \calH^1_\ell \to 0 ,
$$
where $\kappa$ denotes the kernel of the homomorphism
$\calH^0_{T^\infty} \to \calH^1_\ell$.
From these short exact sequences we get long exact sequences
\begin{equation} \labell{long1}
\cdots \to \check{H}^i(\calT, \calH^0_{\t^\infty}) \to \check{H}^i(\calT,\kappa)
\to  \check{H}^{i+1}(\calT,\calH^0_\ell)  
\to \check{H}^{i+1}(\calT,\calH^0_{\t^\infty})
\to \cdots 
\end{equation}
and
\begin{equation} \labell{long2}
 \cdots \to  \check{H}^i(\calT,\kappa) \to \check{H}^i(\calT,\calH^0_{T^\infty})
 \to \check{H}^i(\calT,\calH^1_\ell) \to \cdots.
\end{equation}

Because $\calH^0_{\t^\infty}(U) = \t^\infty(\Phibar\inv(U))$ 
for all $U \subset \calT$, the sheaf $\calH^0_{\t^\infty}$
is a fine sheaf, and so 
$$
\check{H}^i(\calT,\calH^0_{\t^\infty}) = 0 
\quad \text{ for all } \quad i > 0.
$$
Hence, \eqref{long1} implies that
$
\check{H}^{i}(\calT,\kappa) =
\check{H}^{i+1}(\calT,\calH^0_\ell) $ 
for all
$i > 0$. 
Thus, \eqref{long2} becomes 
$$
 \cdots \to  \check{H}^{i+1}(\calT,\fH^0_\ell) \to \check{H}^i(\calT,\calH^0_{T^\infty})
 \to \check{H}^i(\calT,\calH^1_\ell) \to \cdots
$$
for all $i > 0$.
The claim now follows immediately from Proposition~\ref{prop zero coh}.
\end{proof}

The proof of Proposition \ref{glue lifts} will use
the following result.

\begin{Lemma} \labell{global lift}
Let $(M,\omega,\Phi,U)$ and $(M',\omega',\Phi',U)$ be complexity one 
spaces that have the same \DH function.  Then
every $\Phi$-diffeomorphism $f \colon M/T \to M'/T$ 
lifts to a $\PhiT$-diffeomorphism from $M$ to $M'$.
\end{Lemma}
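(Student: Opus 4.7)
The plan is to construct $g$ by gluing local $\PhiT$-lifts via a \v{C}ech cohomology argument. First, I would cover $\calT$ by convex open sets $\{V_i\}$ such that over each $\Phi\inv(V_i)$ a $\PhiT$-lift $g_i$ of $f|_{\Phi\inv(V_i)/T}$ exists. Near exceptional orbits such lifts are supplied by the very definition of a $\Phi$-diffeomorphism. Away from the moment image of the exceptional set, the local normal form theorem identifies invariant neighbourhoods of orbits with models $T \times_H \C^{h+1} \times \h^0$, whose isomorphism class as a complexity one model is determined by the isotropy datum attached to the orbit; the matching \DH functions, together with the fact that $f$ is a $\Phi$-diffeomorphism of the quotients, then force the local models of $M$ and $M'$ to be $\PhiT$-isomorphic in a way compatible with $f$, producing $g_i$.

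On each overlap, the composition $\varphi_{ij} := g_i\inv \circ g_j$ is a $\PhiT$-automorphism of $\Phi\inv(V_i \cap V_j)$ that covers the identity on the quotient, and is therefore given by pointwise multiplication by a smooth $T$-valued function on $\Phi\inv(V_i \cap V_j)/T$ (the action at each non-principal orbit lies in the stabilizer, but continuity together with density of the principal stratum makes this a well-defined section of $T^\infty$). Thus $\{\varphi_{ij}\}$ is a \v{C}ech $1$-cocycle in $\check{H}^1(\calT, \calH^0_{T^\infty})$, and its vanishing as a coboundary would let me adjust the $g_i$ so that they agree on overlaps and assemble into a global $\PhiT$-lift $g : M \to M'$.

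The vanishing $\check{H}^1(\calT, \calH^0_{T^\infty}) = 0$ is the cohomological core of the argument, and is established exactly as in Lemma~\ref{H2 is zero}: the short exact sequence $0 \to \ell \to \t^\infty \to T^\infty \to 1$, together with fineness of $\t^\infty$, reduces the claim to the vanishing of $\check{H}^2(\calT, \calH^0_\ell)$ and $\check{H}^1(\calT, \calH^1_\ell)$, both of which are instances of Proposition~\ref{prop zero coh}. The main obstacle I anticipate is the construction of local lifts at non-exceptional orbits, where one must exploit the matching \DH functions together with the local structure forced by the $\Phi$-diffeomorphism in order to identify the relevant complexity one models; once these local lifts are in place, the remaining gluing is a standard \v{C}ech argument of the same flavour as Lemma~\ref{H2 is zero}.
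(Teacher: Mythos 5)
Your plan has a genuine gap at the very step where the hypothesis of matching Duistermaat–Heckman functions must actually be used, and without that step the rest of the cohomological machinery cannot get started.

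You propose to cover $\calT$ by small convex sets $V_i$ and to build, for each $i$, a $\PhiT$-lift $g_i$ of $f$ over the whole set $\Phi\inv(V_i)$. To do this you appeal to orbit-local information: lifts near exceptional orbits from the definition of $\Phi$-diffeomorphism, and lifts near non-exceptional orbits from the local normal form. But an orbit-local lift is a lift over an invariant neighbourhood of a \emph{single orbit}, whereas $\Phi\inv(V_i)$ contains entire reduced spaces, which for a tall complexity one space are closed surfaces. Passing from a collection of orbit-local lifts over $\Phi\inv(V_i)$ to a single lift over $\Phi\inv(V_i)$ is itself a gluing problem whose obstruction lives (roughly) in $\check{H}^1(\Phi\inv(V_i)/T;T^\infty)\cong H^2(\Phi\inv(V_i)/T;\ell)$, and this group is \emph{not} zero — $\Phi\inv(V_i)/T$ is essentially a surface times a contractible set. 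It is precisely at this point that the equality of the \DH functions is needed: it forces the relevant obstruction class (a difference of Chern classes) to vanish, and Condition~(3.2) — verified in Proposition~\ref{prop121} — lets one localize that class to a single reduced space. You invoke the matching \DH functions only in a vague way during the construction of lifts near non-exceptional orbits, where they play no role (the local model and its lift depend only on the isotropy datum, not on the \DH function). Meanwhile, your later \v{C}ech step over $\calT$ never uses the \DH hypothesis at all. So, if your argument were complete, it would prove the lemma with the \DH hypothesis deleted, which is false.

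Concretely, the content you are missing is exactly what the paper outsources to Lemma~4.11 of~\cite{locun}: given a homeomorphism of quotients that lifts to a $\PhiT$-diffeomorphism near every orbit, together with Condition~(3.2) and equal \DH measures, the lift exists globally. The paper's proof is therefore short: (i) orbit-local lifts exist — near non-exceptional orbits by \cite[Lemma~4.10]{locun}, near exceptional orbits by Definition~\ref{Phi diffeo}; (ii) Condition~(3.2) holds by Proposition~\ref{prop121}; (iii) apply \cite[Lemma~4.11]{locun}. Your plan essentially attempts to reprove that lemma but skips its hardest step. (A secondary issue: your proposed covering ``near exceptional orbits'' versus ``away from the moment image of the exceptional set'' misses non-exceptional orbits lying in the same moment fiber as an exceptional one; the paper argues orbit by orbit instead. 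Also, the assertion that a $\PhiT$-automorphism covering the identity is given by a smooth invariant $T$-valued function is Theorem~\ref{schwarz}, not merely a density-of-principal-stratum argument.) The cohomological vanishing $\check{H}^1(\calT,\calH^0_{T^\infty})=0$ that you invoke at the end does follow by the same mechanism as Lemma~\ref{H2 is zero}, so that part is fine in principle — but it is of no help until the local lifts $g_i$ over $\Phi\inv(V_i)$ are actually in hand.
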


\begin{proof}
Lemma 4.10 of \cite{locun} reads as follows: 
\begin{quotation}
Let $Y$ be a local model for a non-exceptional orbit with a moment map
$\Phi_Y \colon Y \to \t^*$.
Let $W$ and $W'$ be invariant open subsets of $Y$.
Let $g \colon W/T \to W'/T$  be a diffeomorphism which preserves
the moment map.
Then $g$ lifts to an equivariant diffeomorphism from $W$ to $W'$.
\end{quotation}
Therefore, by  Definition \ref{Phi diffeo}  and the 
local normal form theorem,
every orbit in $M/T$ has a neighbourhood on which $f$ lifts 
to a $\PhiT$-diffeomorphism.

Condition (3.2) of \cite{locun} reads as follows: 
\begin{equation}\labell{technical121A}\tag{*} 
\begin{array}{l}
\text{The restriction map }
        H^2(M/T;\Z) \to H^2(\Phi\inv(y)/T;\Z) \\
\text{is one-to-one for some regular value $y$ of $\Phi$.}
\end{array}
\end{equation}
Lemma 4.11 of \cite{locun} reads as follows:
\begin{quotation}
Let $(M,\omega,\Phi,U)$ and $(M',\omega',\Phi',U)$ be complexity one spaces
that satisfy Condition \eqref{technical121A} 
and have the same \DH measure.  Then
every homeomorphism from $M/T$ to $M'/T$ that locally lifts
to a $\PhiT$-diffeomorphism also lifts globally
to a $\PhiT$-diffeomorphism from $M$ to~$M'$.
\end{quotation}
The lemma follows from this and Proposition \ref{prop121}.
\end{proof}

We will also need the following result from \cite{HaSa}:

\begin{refTheorem}[\cite{HaSa}]\labell{schwarz}
Let a torus $T$ act on a manifold $M$. Let $h \colon M \to M$ be an
equivariant diffeomorphism that sends each orbit to itself.
Then there exists a smooth invariant function $f\colon M \to T$
such that $h(m) = f(m) \cdot m$ for all $m \in M$.
\end{refTheorem}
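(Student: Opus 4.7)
I plan to construct $f$ by gluing local lifts produced via the slice theorem, and then patching the local solutions on overlaps using a partition of unity on the orbit space. Throughout, the exponential map $\exp \colon \t \to T$ will be used to pass to the abelian Lie algebra $\t$, where averaging and partitions of unity are well-behaved.

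\emph{Local construction.} By the slice theorem, every orbit has an invariant neighbourhood $T$-equivariantly diffeomorphic to a model $Y = T \times_H V$, where $H$ is the stabilizer of a point on the orbit and $V$ is a slice representation; here $T$ acts via $s \cdot [t,v] = [st,v]$. Because $h$ is $T$-equivariant and orbit-preserving, its restriction to the slice takes the form $h([1,v]) = [\tau(v),v]$, where $\tau(v) \in T$ is determined modulo the stabilizer $H_v$ of $v$ in $H$. Using the identity $[s,v] = [s a\inv, a v]$ for $a \in H$ and the commutativity of $T$, one checks that $v \mapsto \tau(v) H_v$ is $H$-invariant in a natural sense and that the projection $v \mapsto \tau(v) H$ to $T/H$ is a smooth, genuinely $H$-invariant map. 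After shrinking $V$, the principal bundle $T \to T/H$ admits a smooth section, so this map lifts to a smooth $\tilde\tau_0 \colon V \to T$. Averaging $\tilde\tau_0$ over $H$ in the Lie algebra (valid because $T$ is compact abelian and the differences $\log(\tilde\tau_0(av) \tilde\tau_0(v)\inv)$ lie in the Lie algebra of $H_v$) produces a smooth $H$-invariant lift $\tilde\tau \colon V \to T$ still satisfying $\tilde\tau(v) \in \tau(v) H_v$. Setting $f([s,v]) := \tilde\tau(v)$ defines a smooth $T$-invariant function on the neighbourhood satisfying $h(y) = f(y) \cdot y$.

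\emph{Global patching.} Cover $M$ by invariant open sets $\{U_i\}$ on each of which the local construction yields a smooth $T$-invariant $f_i \colon U_i \to T$ with $h(m) = f_i(m) \cdot m$. On each overlap $U_i \cap U_j$, the ratio $f_i f_j\inv (m)$ lies in the stabilizer $T_m$ at each point. After refining the cover sufficiently (so that each $f_i$ lies in a controlled neighbourhood of a chosen baseline), choose smooth $T$-invariant $\xi_i \colon U_i \to \t$ with $\exp \xi_i = f_i$ and with the pointwise property $(\xi_i - \xi_j)(m) \in \t_m$, where $\t_m$ is the Lie algebra of the identity component of $T_m$. Let $\{\rho_i\}$ be a $T$-invariant partition of unity on $M$ subordinate to $\{U_i\}$, obtained by pulling back a partition of unity from $M/T$, and define on each $U_j$
\[
\xi := \xi_j + \sum_i \rho_i\,(\xi_i - \xi_j).
\]
Because each $(\xi_i - \xi_j)(m)$ lies in the linear subspace $\t_m$, one has $\xi(m) - \xi_j(m) \in \t_m$, so the definition is independent of $j$; hence $\xi$ is smooth and $T$-invariant. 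Then $f := \exp \xi$ satisfies $f(m) \cdot m = \exp(\xi_j(m)) \cdot m = f_j(m) \cdot m = h(m)$ on each $U_j$, as required.

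\emph{Main obstacle.} The delicate point is arranging for $f_i f_j\inv$ to take values in the identity component $T_m^0 = \exp \t_m$ of each stabilizer, so that the Lie algebra argument above applies. When $T_m$ is disconnected, a residual cocycle with values in the finite component groups $T_m/T_m^0$ must be trivialized. This is handled by induction on the depth of the orbit-type stratification of $M$: starting on the principal stratum (where all stabilizers equal a fixed generic subgroup) and proceeding to successively deeper strata, one modifies each local lift $f_i$ near a deeper stratum by a locally constant element of the component group of the stabilizer there, using the freedom in the local construction (the ambiguity of $\tilde\tau$ modulo $H_v$). Verifying that these adjustments can be made consistently across strata and preserve smoothness is the main technical content of the theorem.
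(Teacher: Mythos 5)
First, a caveat: the paper offers no proof of this statement. It is quoted as a black box from Haefliger--Salem \cite{HaSa} (with the underlying lifting technology going back to Schwarz \cite{schwarz:IHES}), so there is no in-paper argument to compare yours against; I can only assess your proposal on its own terms.

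Your local construction has a genuine gap. Writing $h([1,v]) = [\tau(v),v]$ with $\tau(v)$ well defined modulo $H_v$ is correct, but the object you then work with, $v \mapsto \tau(v)H \in T/H$, retains far too little information: any lift $\tilde\tau_0 \colon V \to T$ of this map satisfies only $\tilde\tau_0(v) \in \tau(v)H$, whereas the identity $h([1,v]) = [\tilde\tau_0(v),v]$ requires the much stronger condition $\tilde\tau_0(v) \in \tau(v)H_v$, and for non-principal $v$ the coset $\tau(v)H$ is strictly larger than $\tau(v)H_v$. The degenerate case makes this vivid: at a fixed point $H = T$, so $T/H$ is a point, the projected map is constant, and $\tilde\tau_0$ is an arbitrary function carrying no information about $h$; yet on the nearby free orbits $f$ is completely determined by $h$, and the entire content of the theorem there is that this determined function extends smoothly across the fixed-point set. (Already for $T = S^1$ acting on $\C$, where $h(z) = g(|z|^2)z$, one needs the Schwarz/Whitney-type fact that the smooth invariant function $h(z)\bar z$ is a smooth function of $|z|^2$ in order to see that $g$ extends smoothly to the origin.) The subsequent averaging does not repair this: the differences $\tilde\tau_0(av)\,\tilde\tau_0(v)^{-1}$ lie in $H$, not in $H_v$ and not necessarily in the identity component of $H$, so the stated justification for taking logarithms is unfounded, and even if the averaging were carried out it would only return an $H$-invariant map with values in $\tau(v)H$. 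Thus local existence --- the heart of the theorem --- is not established.

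The global patching is closer in spirit to what one actually does (the difference of two local solutions is an invariant map into the stabilizers, and one must kill the resulting cocycle), but as you acknowledge, passing to Lie-algebra-valued logarithms requires the transition functions $f_i f_j^{-1}$ to take values in the identity components of the stabilizers and to admit coherent invariant logarithms; this is exactly the component-group obstruction you defer to an unexecuted induction over orbit-type strata, and it is not automatic (nor is it automatic that the $f_i f_j^{-1}$ can be made small by refining the cover, since two local solutions on the principal stratum may differ by a large invariant map into the principal stabilizer). With both the local existence and the trivialization of the component-group cocycle left open, the proposal does not yet constitute a proof.
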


\begin{proof}[Proof of Proposition \ref{glue lifts}]
Fix any $U$ and $V$ in $\fU$.
Since $U$ and $V$, and hence $U \cap V$, are convex,
we can apply Lemma \ref{global lift} to the spaces 
$M_U|_{U \cap V}$ and $M_V|_{U \cap V}$.
Thus, 
there exists a $\PhiT$-diffeomorphism
$$ F_{UV} \colon M_V |_{U \cap V} \to M_U |_{U \cap V} $$
that lifts $f_{UV}$.

For every $U, V, W \in \fU$,
by Theorem \ref{schwarz},
$F_{UV} \circ F_{VW} \circ F_{UW}\inv$ is given by acting by
a smooth $T$-invariant function $M_U|_{U \cap V \cap W} \to T$.
This function is the pull-back of a smooth function
\begin{equation} \labell{hUVW}
 h_{UVW} \colon Q|_{U \cap V \cap W} \to T .
\end{equation}
On quadruple intersections, we have
\begin{equation} \labell{h is closed}
 (h_{UVW}) (h_{UVX})\inv (h_{UWX}) (h_{VWX})\inv = 1.
\end{equation}
This is a cocycle condition; hence, the $h_{UVW}$ 
represent a cohomology class in $H^2(\fU,\fH^0_{T^\infty})$.
By Lemma \ref{H2 is zero}, after possibly passing to a refinement
of the cover $\fU$, there exist smooth $T$-invariant functions
$$B_{UV} \colon Q|_{U \cap V} \to T$$
such that
\begin{equation} \labell{h is exact}
B_{UV} \; B_{VW} \; B_{UW}\inv = h_{UVW}
\end{equation}
on triple intersections. 

Then
$$ g_{UV}(x) := 
   \left(B_{UV}(x)\right)\inv \cdot F_{UV}(x) 
$$
are liftings of the $f_{UV}$'s that satisfy 
the required compatibility condition.
\end{proof}


\section{Gluing symplectic forms}
\labell{sec:gluing}

The last ``local to global" step is to modify the symplectic forms
on the local pieces so that they agree on overlaps.

\begin{Proposition} \labell{glue forms given DH}
Let an $n-1$ dimensional torus $T$ act on an oriented $2n$ dimensional 
manifold $M$, 
and let $\Phi \colon M \to \calT$ be an invariant proper map
to an open subset $\calT \subset \t^*$.
Assume that $\Delta = \image \Phi$ is convex. 
Fix a function $\rho \colon \Delta  \to \R_{>0}$
and  an open cover $\fU$ of $\calT$.

Assume that, for all $U \in \fU$,  there exists an invariant symplectic 
form $\omega_U$ on $\Phi\inv(U)$ with moment map $\Phi|_U$ and \DH function 
$\rho|_U$ such that $\omega_U$ is compatible with the given orientation.
Then there exists an invariant symplectic form $\omega'$ on $M$ 
with moment map $\Phi$ and \DH function $\rho$ such that $\omega'$ 
is compatible with the given orientation.
\end{Proposition}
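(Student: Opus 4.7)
The plan is a Čech--de Rham gluing argument for the local symplectic forms, closely paralleling the proof of Proposition~\ref{glue lifts}, followed by a fiberwise Moser-type correction to restore non-degeneracy.

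First, I would observe that on each overlap $\Phi\inv(U \cap V)$ the difference $\omega_V - \omega_U$ is closed, $T$-invariant, and horizontal, since $\iota(\xi_M)(\omega_V - \omega_U) = 0$ for every $\xi \in \t$ (both forms share the moment map $\Phi$); hence $\omega_V - \omega_U$ is a closed basic 2-form. Moreover, on each regular reduced surface $\Phi\inv(\alpha)/T$ with $\alpha \in U \cap V$, the reductions of $\omega_U$ and $\omega_V$ are both volume forms of total area $\rho(\alpha)$, so their difference reduces to a 2-form of zero integral, hence an exact 2-form on this compact oriented surface. Applying Proposition~\ref{prop121} to the complexity one space on $\Phi\inv(U \cap V)$, this fiberwise exactness lifts to a global statement: there exist basic 1-forms $\beta_{UV}$ on $\Phi\inv(U \cap V)$ with $d\beta_{UV} = \omega_V - \omega_U$.

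Next I would assemble the $\beta_{UV}$'s via a Čech--de Rham argument. The collection $\{\beta_{UV}\}$ forms a 1-cocycle modulo closed basic 1-forms, and the vanishing $\check{H}^i(\calT, \fH_A^j) = 0$ for $i > 0$ from Proposition~\ref{prop zero coh} (extended to sheaves of basic differential forms, whose local cohomological triviality is supplied by the local normal form theorem and the Poincaré lemma on equivariant models) yields, after possibly refining $\fU$, a decomposition $\beta_{UV} = \beta_V - \beta_U$ with basic 1-forms $\beta_U$ on $\Phi\inv(U)$. Setting $\omega'_0 := \omega_U + d\beta_U$ on each $\Phi\inv(U)$ then defines a global closed invariant 2-form on $M$. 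Because $\beta_U$ is basic, Cartan's formula gives $\iota(\xi_M)\, d\beta_U = 0$, so $\omega'_0$ has moment map $\Phi$, and since $d\beta_U$ contributes zero to every fiber integral, $\omega'_0$ has \DH function $\rho$.

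The main obstacle is non-degeneracy. The reduction of $\omega'_0$ at each regular $\alpha$ has the correct positive total integral $\rho(\alpha)$, but the exact correction $d\beta_U$ is not size-controlled, so the reduced form may have zeros or sign changes. To handle this I would apply a fiberwise Moser-type correction: on a compact oriented surface, any closed 2-form of positive total integral differs from a strictly positive volume form of the same integral by an exact 2-form. A partition-of-unity assembly on $\calT$ combined with another application of the Čech cohomology vanishing converts this family of fiberwise corrections into a globally defined basic 1-form $\gamma$ for which $\omega' := \omega'_0 + d\gamma$ is a $T$-invariant symplectic form on $M$ with moment map $\Phi$, \DH function $\rho$, and compatible with the prescribed orientation. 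The trickiest step will be guaranteeing this final non-degeneracy while simultaneously preserving closedness, the moment map, and the \DH function throughout the patching.
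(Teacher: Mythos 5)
Your first two steps are essentially right and match the paper's strategy: the difference $\omega_V - \omega_U$ is a closed basic $2$-form; Proposition~\ref{prop121} (applied as in Lemma 3.5 of \cite{locun}) gives basic $1$-forms $\beta_{UV}$ with $d\beta_{UV} = \omega_V - \omega_U$; and the vanishing from Proposition~\ref{prop zero coh} (together with fineness of the sheaves of basic forms) lets you improve to a \v{C}ech cocycle after refining the cover. Up to that point you have reproduced the paper's preparation.

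The gap is in how you handle non-degeneracy, which you correctly flag as the main obstacle but then outsource to a fiberwise Moser correction. That step would not go through as stated. First, in this proposition the local spaces $M_U$ need not be tall, so some reduced spaces $\Phi^{-1}(\alpha)/T$ are single points or singular, and "a closed $2$-form of positive integral differs from a volume form by an exact $2$-form" has no fiberwise content there. Second, even over regular values, the reduced spaces vary, and turning the fiberwise Moser primitives into a single smooth \emph{basic} $1$-form $\gamma$ on $M$ that is $T$-invariant, preserves the moment map, keeps the \DH function unchanged, and makes $\omega'_0 + d\gamma$ non-degenerate \emph{everywhere} (not just on regular fibers, but also at singular orbits, in the directions transverse to $\ker d\Phi$) is not something that follows from a fiberwise Moser argument alone; you would be re-proving a substantial part of the proposition.

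The paper sidesteps this entirely by not choosing the primitives $\beta_U$ arbitrarily. It sets
$$\omega'_V := \sum_{U} \lambda_U\,\omega_U + \sum_{U} d\lambda_U \wedge \beta_{UV},$$
where $\{\lambda_U\}$ is a partition of unity \emph{pulled back from $\calT$}. Because each $\lambda_U$ is pulled back from the base, $d\lambda_U$ annihilates $\ker d\Phi$, so the second sum vanishes on $\ker d\Phi \times \ker d\Phi$; the first sum is a pointwise convex combination of the symplectic forms $\omega_U$, all inducing the given orientation. Lemma~\ref{nondegenerate} then gives non-degeneracy (and the correct orientation) at every point, with no Moser step and no dependence on the reduced spaces being smooth or tall. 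Closedness, invariance, the moment map, and the \DH function all come for free because $\omega'_V - \omega_V = d\big(\sum_U \lambda_U \beta_{UV}\big)$ is exact with a basic primitive. So the missing idea in your write-up is to use the specific partition-of-unity formula so that $\omega'$ is manifestly a convex combination plus a horizontal-vanishing correction, which makes Lemma~\ref{nondegenerate} applicable directly instead of needing an a posteriori Moser repair.
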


Let a compact Lie group $G$ act on a manifold $M$,
and let $\{\xi_M\}_{\xi \in \g}$  be the vector fields
that generate this action.
A differential form $\beta$ on $M$ is \textbf{basic} if
it is $G$ invariant and horizontal,  that is,
$\iota_{\xi_M} \beta =0 $ for all $\xi \in \g$.
The basic differential forms on $M$ constitute a differential
complex $\Omega^*_{\operatorname{basic}}(M)$  
whose cohomology coincides with the \v{C}ech cohomology
of the topological quotient $M/G$; see \cite{koszul}.

We will need the following technical lemma; 
cf.~\cite[Lemma 3.6]{locun}.

\begin{Lemma} \labell{nondegenerate}
Let an $(n-1)$ dimensional abelian group $T$ act faithfully on
a $2n$ dimensional manifold $M$. 
Let $\Phi \colon M \to \t^*$ be a smooth invariant map.
Let $\omega_0$ and $\omega_1$ be invariant symplectic forms on $M$ 
with moment map  $\Phi$ that induce the same orientation on $M$.
Let $\alpha$ be a basic two-form on $M$
such that $\alpha(\xi,\eta) = 0$ for all $\xi,\eta \in \ker\, d\Phi$.
Let $\lambda_0$ and $\lambda_1$ be non-negative functions on $M$ such that 
$\lambda_0 + \lambda_1 = 1$.  Then
$$\lambda_0 \, \omega_0 +  \lambda_1 \, \omega_1 + \alpha $$
is non-degenerate and induces the same orientation
as $\omega_0$ and $\omega_1$.
\end{Lemma}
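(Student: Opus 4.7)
The lemma is pointwise linear algebra, so my plan is to fix $m\in M$ and argue on the vector space $V=T_mM$. Write $W=T_m(T\cdot m)$, let $\mathfrak h$ be the Lie algebra of the stabilizer of $m$, and let $K=\ker(d\Phi)_m$. From the moment map conditions one checks $W\subset K=W^{\omega_0}=W^{\omega_1}$, so $W$ is isotropic and $\omega_i|_K$ has $W$ as its exact kernel. The idea is to pin down the block structure of $\omega_0-\omega_1$ and of $\alpha$ relative to a Darboux basis for $\omega_0$, and then compute $\beta^n$ directly by a dimension-counting argument.

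First I would extract structural constraints. The identity $\iota(\xi_M)(\omega_0-\omega_1)=0$ for all $\xi\in\t$ immediately gives $(\omega_0-\omega_1)_m|_{W\times V}=0$. For $\xi\in\mathfrak h$ one has $\xi_M(m)=0$; differentiating the same identity at $m$ in an arbitrary direction $v$ yields $(\omega_0-\omega_1)_m((d\xi_M)_m v,w)=0$ for all $v,w$. Invoking the local normal form together with the fact that the corresponding complexity one model is tall (Corollary~\ref{tall models}), the image of $(d\xi_M)_m$ as $\xi$ varies in $\mathfrak h$ and $v$ in $V$ sweeps out the entire slice direction $K'\subset K$ complementary to $W$. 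Hence $(\omega_0-\omega_1)|_{K\times V}=0$. In parallel, $\alpha$ vanishes on $W\times V$ by basic-ness and on $K\times K$ by hypothesis.

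Next I would pick a Darboux-type splitting $V=U\oplus W\oplus K'$ for $\omega_0$, with $U$ complementary to $K$, bases $\{u_i\},\{w_i\}$ satisfying $\omega_0(u_i,w_j)=\delta_{ij}$, and $\omega_0|_{K'}$ a symplectic form $\sigma$ that is $\omega_0$-orthogonal to $U\oplus W$; thus $\omega_0=\sum_i u^i\wedge w^i+\sigma$ in the dual basis. The block-vanishing above forces $\omega_1=\omega_0+\widetilde Q$ with $\widetilde Q$ purely a $u^i\wedge u^j$-form, and $\alpha=\widetilde\alpha_U+\widetilde\alpha_{UK'}$ with only $u^i\wedge u^j$ and $u^i\wedge e^j$ parts (where $\{e^j\}$ is a basis of $(K')^*$). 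Writing $\beta=\tilde A+\tilde B+\tilde C+\tilde D$ with $\tilde A=\sum u^i\wedge w^i$, $\tilde B=\lambda_1\widetilde Q+\widetilde\alpha_U$, $\tilde C=\widetilde\alpha_{UK'}$, $\tilde D=\sigma$, any top-form term $\tilde A^a\tilde B^b\tilde C^c\tilde D^d$ in the multinomial expansion of $\beta^n$ must have $a+b+c+d=n$, a $w$-count of $a=k:=\dim W$, and a $u$-count of $a+2b+c=k$, which forces $b=c=0$ and $d=n-k$. Hence $\beta^n/n!=(\tilde A^k/k!)\wedge(\sigma^{n-k}/(n-k)!)=\omega_0^n/n!$, the positive volume form guaranteed by the orientation assumption.

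The delicate step is the claim that the image of $(d\xi_M)_m$ covers all of $K'$ as $\xi,v$ vary; this is where the tall complexity-one structure (ensuring that every weight of the stabilizer on the symplectic slice is nonzero) enters through the local normal form, and it is the only genuinely nontrivial input. Once that block-vanishing is secured, the remainder is a routine multinomial bookkeeping exercise, and the fact that $\omega_0$ and $\omega_1$ induce the same orientation is encoded in (and reproduced by) the equality $\beta^n/n!=\omega_0^n/n!$.
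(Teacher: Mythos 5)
There is a genuine gap. Your key structural claim is that, at every point $m$, the subspace
$\operatorname{span}\{(d\xi_M)_m v : \xi\in\mathfrak h,\ v\in V\}$ is all of the slice direction $K'\cong\C^{h+1}$, and hence $(\omega_0-\omega_1)_m$ vanishes on $K'\times V$. This is false in general. In the local model $T\times_H\C^{h+1}\times\h^0$, the linearization $(d\xi_M)_m$ for $\xi\in\mathfrak h$ acts on $\C^{h+1}$ by the infinitesimal $H$-representation, whose image is $\bigoplus_{k:\eta_k\neq 0}\C_k$, where $\eta_0,\dots,\eta_h$ are the weights. This equals all of $\C^{h+1}$ only when every weight is nonzero, i.e.\ only at \emph{exceptional} orbits. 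At any point with trivial stabilizer (the generic case), $\mathfrak h=0$ and the span is $\{0\}$; at a non-exceptional orbit with nontrivial stabilizer, Lemma~\ref{nonexceptional Y} produces a $\C$-factor with trivial $H$-action. Tallness of the local model (Corollary~\ref{tall models}) does not rescue this: tall models routinely have a zero weight (indeed every non-exceptional point gives one), and the statement of the Lemma does not even assume tallness.

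The downstream effect is that your claimed block structure for $\omega_1-\omega_0$ (``purely a $u^i\wedge u^j$-form'') is wrong: in the decomposition $W\oplus U\oplus K'$ the difference generally has nonzero $U\times K'$ and $K'\times K'$ blocks, because $\omegabar_0$ and $\omegabar_1$ (the $K'\times K'$ blocks of $\omega_0,\omega_1$) need not be equal. They are only constrained to have the same quadratic $H$-moment map and the same orientation. A minimal counterexample to your final identity $\beta^n/n!=\omega_0^n/n!$ is $n=1$, $T$ trivial, $M$ a surface, $\Phi$ and $\alpha$ trivial: then $\omega_0,\omega_1$ are any two area forms with the same orientation, and the convex combination need not equal $\omega_0$. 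The conclusion of the Lemma holds, but not your stronger claimed identity.

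The paper's proof keeps the $K'\times K'$ block as $\omegabar=\lambda_0\omegabar_0+\lambda_1\omegabar_1$ and only claims that the other blocks wash out in the determinant; it then proves $\omegabar$ is nondegenerate with the right orientation by a convexity argument. For $H$ trivial ($h=0$, $K'=\C$) the forms $\omegabar_0,\omegabar_1$ are positive multiples of each other, so any convex combination is too. For $H$ nontrivial it restricts the translation-invariant form $\omegabar$ to a point $v\in\C^{h+1}$ with trivial $H$-stabilizer and applies the previous case to the $H$-action on $\C^{h+1}$. If you want to salvage your multinomial bookkeeping, you would have to carry the extra blocks through and ultimately face this same convexity point; the cleaner route is the paper's determinant/Schur-complement reduction to the slice block followed by the two cases.
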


\begin{proof}
Let $x \in M$ be a point with stabilizer $H$; let $h$ be
the dimension of $H$.
By the local normal form theorem, a neighbourhood of the orbit of $x$
with the symplectic form $\omega_0$ is equivariantly symplectomorphic
to a neighbourhood of the orbit $\{ [t,0,0] \}$ in the model
$T \times_H \C^{h+1} \times \h^0$. 
The tangent space at $x$ splits as
$\t / \h \oplus \h^0 \oplus \C^{h+1}$, where $\t/\h$ is the tangent space
to the orbit.
By the definition of the moment map, the forms ${\omega_0}|_x$
and ${\omega_1}|_x$ are given by block matrices of the form
$$ \left( \begin{array}{ccc} 0 & I & 0 \\ -I & * & * \\
        0 & * & \omegabar_0 \end{array} \right)
\qquad \text{ and } \qquad
   \left( \begin{array}{ccc} 0 & I & 0 \\ -I & * & * \\
        0 & * & \omegabar_1 \end{array} \right)
$$
where $I$ is the natural pairing between the vector space $\t/\h$
and its dual, $\h^0$, and where $\omegabar_0$ and $\omegabar_1$ are
linear symplectic forms on $\C^{h+1}$ with the same moment map and the
same orientation. 
By our assumptions, $\alpha|_x$ is given by a block matrix of the form
$$ \left( \begin{array}{ccc} 0 & 0 & 0 \\ 0 & * & * \\
        0 & * & 0 \end{array} \right) .
$$
Hence, 
$(\lambda_0 \, \omega_0 + \lambda_1 \, \omega_1 + \alpha)|_x$
is given by a block matrix of the form
$$ \left( \begin{array}{ccc} 0 & I & 0 \\ - I & * & * \\
          0 & * & \omegabar \end{array} \right)
$$
where
$$ \omegabar = \lambda_0(x) \omegabar_0 + \lambda_1(x) \omegabar_1.$$ 
It suffices to show that $\omegabar$ is non-degenerate
 and induces the same orientation
as $\omegabar_0$ and $\omegabar_1$.

\noindent \textbf{Case 1.}
Suppose that the stabilizer of $x$ is trivial.
Then $\omegabar_0$ and $\omegabar_1$ are non-zero two-forms on $\C$
that induce the same orientation, 
and so $\omegabar$ is non-degenerate and induces the same orientation.

\noindent \textbf{Case 2.}
Suppose that the stabilizer of $x$ is non-trivial.
Viewing $\omegabar$ as a translation invariant differential two-form 
on $\C^{h+1}$, it is enough to find some $v \in \C^{h+1}$
such that $\omegabar|_v$ is non-degenerate and induces the same orientation 
as $\omegabar_0|_v$ and $\omegabar_1|_v$.
We choose $v \in \C^{h+1}$ whose stabilizer is trivial and apply Case 1 
to the $H$ action on $\C^{h+1}$.
\end{proof}

\begin{proof}[Proof of Proposition \ref{glue forms given DH}]
Given $j \in \N$, 
define a sheaf   $\Otb^j$  on $\calT$
by
$$\Otb^j(U) = \Omega^j_{\operatorname{basic}}(\Phi\inv(U))
\quad \mbox{for all open }U \subset \calT.$$
Consider the double complex
$$ K^{i,j} = \check{C}^i(\fU,\Otb^j).$$
Let $d \colon  K^{i,j} \to K^{i,j+1}$ 
denote the de-Rham differential, 
and let $\delta \colon K^{i,j} \to K^{i+1,j}$
denote the \v{C}ech differential.

To prove the proposition, it will be enough to find 
$\beta \in C^1(\fU,\Otb^1)$ such that $\delta \beta = 0$
and $d \beta_{VW} = \omega_V - \omega_W$ for all $V$ and $W$ in $\fU$.
To see this, let $\{\lambda_U\}_{U \in \fU}$ 
be the pull back to $M$
of a smooth partition of unity on
$\calT$ subordinate to $\fU$.
Define
$$\omega'_V := \sum_{U \in \fU} \lambda_U\,  \omega_U + 
\sum_{U \in \fU} d \lambda_U \wedge \beta_{UV}  \  \in \Omega^2(\Phi\inv(V))
\quad \mbox{for all } V \in \fU.$$
Since $\delta \beta   = 0$ and $\sum_{U \in \fU} \lambda_U = 1$,
$$\omega_V' - \omega_W' 
= \sum_{U \in \fU} d \lambda_U \wedge (\beta_{UV} - \beta_{UW}) 
= d \Big( \sum_{U \in \fU} \lambda_U \Big) \wedge \beta_{WV} = 0.$$
Therefore, the $\omega_V'$ glue together to give a global form 
$\omega' \in \Omega^2(M)$.
Since each $\omega_U$ is an invariant symplectic form, and since
$d \lambda_U(\xi) = 0$ for all $U \in \fU$ and all $\xi \in \ker \  d \Phi$,
by repeated application of Lemma~\ref{nondegenerate}
$\omega'$ is non-degenerate and is compatible with the given orientation.
Moreover,
\begin{multline*}
\omega_V + \sum_{U \in \fU} d(\lambda_U \, \beta_{UV})   
 = \omega_V + \sum_{U \in \fU} \big( d \lambda_U \wedge \beta_{UV} +
 \lambda_U \, (\omega_U - \omega_V) \big) \\
=   \omega_V - \sum_{U \in \fU} \lambda_U \, \omega_V + \omega'_V 
 = \omega'_V.
\end{multline*}
Thus, each $\omega_V$ and $\omega'_V$ differ by the exterior derivative
of a basic one-form.
This implies that $\omega'$ is closed, and so it is a symplectic
form compatible with the given orientation.
It also implies that $\omega'$ is invariant, has the same moment map $\Phi$
as $\omega_V$, and has the same \DH function $\rho$ as $\omega_V$.

As a first step towards finding the required cochain,
we will show
that we may assume that
there exists $\beta \in C^1(\fU,\Otb^1)$ 
such that $d \beta_{VW} = \omega_V - \omega_W$ for all $V$ and $W$ in $\fU$.
After possibly passing to a refinement, we
may assume that every $U \in \fU$ is convex.
As we mentioned earlier,  Condition (3.2) of \cite{locun} reads
as follows:
\begin{equation}\labell{technical121B}\tag{*} 
\begin{array}{l}
\text{The restriction map }
        H^2(M/T;\Z) \to H^2(\Phi\inv(y)/T;\Z) \\
\text{is one-to-one for some regular value $y$ of $\Phi$.}
\end{array}
\end{equation}
Moreover, Lemma 3.5 of \cite{locun} reads
as follows:
\begin{quotation}
Let $(M,\omega,\Phi,U)$ and $(M',\omega',\Phi',U)$ be
complexity one spaces
that satisfy Condition \eqref{technical121B}
and have the same \DH measure.
Then for every $\PhiT$-diffeomorphism $g \colon M \to M'$
there exists  a basic one-form $\beta$ on $M$
such that $\d \beta = g^* \omega' - \omega$.
\end{quotation}
Hence, the claim follows immediately from Proposition \ref{prop121}.

Next, we will show that we may assume that there exists 
$\gamma \in \check{C}^2(\fU,\Otb^0)$ 
such that $\delta \gamma = 0$ and $\delta \beta = d \gamma$.
For all $j \in \N$, define a presheaf $\fH^j_\R$ 
on $\calT$  
by $\fH_\R^j(U) = \check{H}^j(\Phi\inv(U)/T;\R)$  
for all open $U \subset \calT$.
Recall that 
the \v{C}ech cohomology of $\Phi\inv(U)/T$
coincides with the cohomology of 
$\left( \Omega^*_{\operatorname{basic}}(\Phi\inv(U)), d \right)$.
Since $\delta^2 \beta = 0$ 
and $d \delta \beta = 0$,
the cochain $\delta \beta$
represents  a cohomology class in $\check{H}^2(\fU;\fH^1_\R)$.
By Proposition \ref{prop zero coh}, $\check{H}^2(\calT,\fH^1_\R) = 0$.
Hence,  after passing to a refinement,
there exists $\beta' \in \check{C}^1(\fU,\Otb^1)$
such that $d\beta'=0$ and such that $\delta \beta$ and $\delta \beta'$
agree as elements of  $\check{C}^2(\fU,\fH^1_\R)$, i.e., 
there exists $\gamma \in \check{C}^2(\fU,\Otb^0)$
such that $\delta \beta - \delta \beta' = d  \gamma$.
By replacing $\beta$ by $\beta - \beta'$, 
we may assume that $\delta \beta = d \gamma$, as required.
Since $\check{H}^3(\calT,\fH^0_\R) = 0$ by Proposition \ref{prop zero coh}, 
we may assume that $\delta \gamma = 0$ by a similar argument.

Finally, we will use the fact that $\Otb$ is a fine sheaf
to show that 
 we may assume that $\delta \beta = 0$, as required.
Define $\eta \in \check{C}^1(\fU,\Otb^0)$ by 
$$\eta_{VW} =  \sum_{U \in \fU} \lambda_U \, \gamma_{UVW} 
   \quad \mbox{for all} \quad V,W \in \fU.$$
Since $\delta \gamma = 0$, \ $\delta \eta = \gamma$, and so
$\delta d \eta = d \gamma = \delta \beta$.
Hence,  we may replace $\beta$ by $\beta - d \eta$.
\end{proof}

\section{Reconstruction}
\labell{sec:proof reconstruction}

By breaking a space into the moment map
preimages of small open subsets of $\ft^*$, 
and then gluing them back together, 
we obtain a special case of Theorem \ref{data from manifold}.
This theorem is easier to prove than our other existence theorems,
in that it does not require the ``local existence" results proved in 
Sections~\ref{sec:compatible skeleton}--\ref{sec:DH compatible},
i.e., it does not 
require us to determine which spaces can occur as preimages 
of small open subsets of~$\t^*$.

\begin{refTheorem} \labell{reconstruction}
Let $(M,\omega,\Phi,\calT)$ be a tall complexity one space
of genus~$g$.  Let $\Sigma$ be a closed oriented surface of genus $g$,
and let  $f \colon M_\exc \to \Sigma$ be any painting. 
Then there exists a tall complexity one space
$(M',\omega',\Phi',\calT)$ with the same moment image and  \DH  function as $M$
whose painting is equivalent to $f$.
\end{refTheorem}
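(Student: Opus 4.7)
The plan is to use the ``break and reglue'' strategy that underlies all of Part~I: cover $\calT$ by small convex open sets $\fU$, use the pieces $M_U := \Phi^{-1}(U)$ of $M$ themselves as the local models, and then assemble them via new gluing data chosen so as to realize $f$ as the painting. Since $\Sigma$ has the same genus as the reduced spaces of $M$, Proposition~\ref{trivialize M mod T} provides, after choosing an orientation-preserving homeomorphism between $\Sigma$ and the surface it produces, a global map $f_M \colon M/T \to \Sigma$ whose restriction to $M_\exc$ is the canonical painting of $M$. This restriction need not be equivalent to $f$, so the point of the construction is to modify how the local pieces fit together, not the pieces themselves.

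First I would choose $\fU$ fine enough that, on each $U \in \fU$, the crucial technical result \cite[Proposition~20.1]{globun} produces a continuous map $f_U \colon M_U/T \to \Sigma$ such that $(\Phibar, f_U) \colon M_U/T \to (U \cap \image \Phi) \times \Sigma$ is a homeomorphism and $f_U|_{M_\exc \cap M_U/T}$ agrees with (a representative of the equivalence class of) $f$. Geometrically, this step extends the prescribed painting, defined only on the skeleton, to a local trivialization of the whole quotient, and is the step that genuinely uses the hypothesis that $\Sigma$ has genus~$g$.

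Next, for each pair $U,V \in \fU$, set
\[
f_{UV} := (\Phibar, f_U)^{-1} \circ (\Phibar, f_V) \colon M_V|_{U\cap V}/T \to M_U|_{U\cap V}/T.
\]
Because $f_U$ and $f_V$ both agree with $f$ on the skeleton, the map $f_{UV}$ is the identity on the image of $M_\exc$ in $M|_{U\cap V}/T$, hence by the local normal form theorem lifts to a $\PhiT$-diffeomorphism near each exceptional orbit; so $f_{UV}$ is a $\Phi$-diffeomorphism in the sense of Definition~\ref{Phi diffeo}. The cocycle identity $f_{UV} \circ f_{VW} = f_{UW}$ on triple overlaps is immediate from the definition. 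Applying Proposition~\ref{glue lifts} and refining $\fU$ if necessary, I lift the $f_{UV}$ to $\PhiT$-diffeomorphisms $g_{UV} \colon M_V|_{U\cap V} \to M_U|_{U\cap V}$ still satisfying the cocycle condition, and glue the oriented Hamiltonian $T$-manifolds $M_U$ along these $g_{UV}$ to obtain an oriented $T$-manifold $M'$ with a proper invariant map $\Phi' \colon M' \to \calT$ whose image is $\Delta = \image \Phi$. Each piece carries the original symplectic form $\omega|_{M_U}$ with moment map $\Phi|_U$ and \DH function $\rho|_U$ (where $\rho$ is the \DH function of $M$), so Proposition~\ref{glue forms given DH} produces a single $T$-invariant symplectic form $\omega'$ on $M'$ with moment map $\Phi'$ and \DH function $\rho$. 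The local maps $f_U$ descend to a global map $M'/T \to \Sigma$ whose restriction to $M'_\exc$ is $f$ under the natural identification of $M_\exc$ with $M'_\exc$, establishing the equivalence of paintings.

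The main obstacle is the first step: extending $f$ from the skeleton to a local trivialization $f_U$ on each small open $U$ while preserving the homeomorphism property of $(\Phibar, f_U)$. Away from exceptional orbits this would follow directly from Proposition~\ref{trivialize M mod T}, but near exceptional orbits one must reconcile the rigid fibration structure given by the local normal form with an arbitrary prescribed painting. This compatibility is exactly what \cite[Proposition~20.1]{globun} supplies, and it is where the equality $\operatorname{genus}(\Sigma) = g$ is essential; without it, Step~1 would fail and one would be forced into the more elaborate Part~II arguments needed for the general existence theorem.
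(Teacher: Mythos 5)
Your overall strategy is the paper's strategy: cover $\calT$ by small convex open sets, use pieces of $M$ itself as local models, obtain a cocycle of $\Phi$-diffeomorphisms from \cite[Prop.~20.1]{globun}, lift it to a cocycle of $\PhiT$-diffeomorphisms via Proposition~\ref{glue lifts}, glue, and then fix the symplectic forms via Proposition~\ref{glue forms given DH}. In the paper this is packaged as: Theorem~\ref{reconstruction} is the special case of Proposition~\ref{combined} with $\fU = \{\calT\}$, and the internal refinements happen inside the proofs of the component propositions. So the decomposition and the key lemmas are the same.

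Where you diverge is in your description of what \cite[Prop.~20.1]{globun} supplies. As reformulated in the paper (Proposition~\ref{technical}), it does \emph{not} produce local trivializations $f_U \colon M_U/T \to \Sigma$; it directly hands you, after refining the cover, a cocycle of $\Phi$\emph{-diffeomorphisms} $h_{VU}$ together with the ``any space glued via compatible lifts of $h_{VU}$ has painting equivalent to $f$'' property. Your attempted reconstruction of that output -- defining $f_{UV} := (\Phibar, f_U)^{-1} \circ (\Phibar, f_V)$ from hypothetical extensions $f_U$ of $f$ -- only yields $\Phi$-preserving \emph{homeomorphisms}. Your argument that $f_{UV}$ is a $\Phi$-diffeomorphism because it restricts to the identity on the image of the skeleton does not close this gap: being the identity on a closed subset says nothing about smoothness of the map on the quotient differential structure, nor about the existence of a $\PhiT$-diffeomorphism lift in a \emph{neighbourhood} of each exceptional orbit, both of which are required by Definition~\ref{Phi diffeo}. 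If you simply invoke Proposition~20.1 as the paper does -- as a black box producing the $h_{VU}$ -- the remainder of your argument (cocycle lifting, gluing manifolds, gluing symplectic forms, verifying the painting) is correct and matches the paper's Proposition~\ref{combined} step by step. But as written, the paragraph that builds $f_{UV}$ out of $f_U$ asserts a smoothness/liftability statement without proof, and that is precisely the content one is supposed to be importing from \cite{globun} rather than re-deriving.
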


Since every tall complexity one space
$(M,\omega,\Phi,\calT)$ has a  convex moment image 
$\Delta = \Phi(M)$, 
a positive \DH function $\rho$,
and a skeleton $S = M_\exc$, 
Theorem~\ref{reconstruction} is simply the special
case of Proposition~\ref{combined} below with $\fU = \{ \calT \}$.
(Proposition~\ref{combined} is also a key ingredient in the
proofs of Theorems \ref{data from manifold}, \ref{thm:existence}, 
and \ref{existence}; see  \S\ref{sec:proof}.)

\begin{Proposition} \labell{combined}
Let $\calT$ be a convex open subset of $\ft^*$,
$\Delta \subset \calT$  a convex subset,
and $\rho \colon \Delta \to \R_{> 0}$  a  positive function.
Let $(S,\pi)$ be a skeleton over $\calT$, 
\ $\Sigma$ a closed oriented surface of genus $g$, 
and  $f \colon S \to \Sigma$  a painting.
Finally, let $\fU$ be a cover of $\calT$ by convex open sets.

Suppose that for each  $U \in \fU$
there exists 
a complexity one space $(M_U,\omega_U,\Phi_U)$ of genus $g$ over $U$ 
with moment image $\Delta \cap U$ and \DH function $\rho|_{\Delta \cap U}$ 
whose skeleton is isomorphic to $S \cap \pi\inv(U)$.
Then there exists a complexity one space $(M,\omega,\Phi,\calT)$
with moment image  $\Delta$ and \DH function $\rho$
whose painting is equivalent to $f$.
\end{Proposition}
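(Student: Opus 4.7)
The plan is to use the painting $f$ to prescribe compatible trivializations of the quotients $M_U/T$, to extract cocycle-compatible $\Phi$-diffeomorphisms on overlaps, and then to invoke the gluing machinery of Sections \ref{sec:lifting} and \ref{sec:gluing}. For each $U \in \fU$, apply Proposition \ref{trivialize M mod T} to $M_U$; since the genus of $M_U$ equals the genus $g$ of $\Sigma$, one may take the target surface to be $\Sigma$ itself, producing a homeomorphism
$$\phi_U \colon M_U/T \longrightarrow (\Delta \cap U) \times \Sigma$$
whose first component is $\Phibar_U$. The restriction of the $\Sigma$-component of $\phi_U$ to $(M_U)_\exc$ is the painting of $M_U$; via the given isomorphism of skeletons, this may be viewed as a painting on $S \cap \pi\inv(U)$, alongside the prescribed $f|_{S \cap \pi\inv(U)}$. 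After refining $\fU$ enough that each $S \cap \pi\inv(U)$ is simple in the sense dictated by Definition \ref{def:skeleton} and Lemma \ref{lemma:Y} — so that any two paintings on $S \cap \pi\inv(U)$ into $\Sigma$ are equivalent — compose $\phi_U$ with an appropriate fiber-preserving self-homeomorphism of $(\Delta \cap U) \times \Sigma$ to arrange that its skeleton restriction coincides with $f|_{S \cap \pi\inv(U)}$ pulled back along the skeleton isomorphism.

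On each overlap, set $h_{UV} = \phi_U\inv \circ \phi_V \colon M_V|_{U \cap V}/T \to M_U|_{U \cap V}/T$. By construction the $h_{UV}$ preserve moment maps, respect the labels on the skeleton, identify the paintings, and satisfy the cocycle identity $h_{UV} \circ h_{VW} = h_{UW}$ automatically. Invoking the crucial technical result \cite[Proposition~20.1]{globun} — which upgrades such a painting-compatible homeomorphism between the quotients of two complexity one spaces sharing moment image and \DH function over a convex base to a $\Phi$-diffeomorphism — one replaces the $h_{UV}$ by $\Phi$-diffeomorphisms $f_{UV}$, still satisfying the cocycle condition (after a further refinement of $\fU$ if needed). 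Proposition \ref{glue lifts} then lifts the $f_{UV}$ to $\PhiT$-diffeomorphisms $g_{UV} \colon M_V|_{U \cap V} \to M_U|_{U \cap V}$ respecting the cocycle. Gluing the $M_U$ via the $g_{UV}$ produces a smooth $T$-manifold $M$ with a proper invariant map $\Phi \colon M \to \calT$ of image $\Delta$.

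The local symplectic forms $\omega_U$ transfer to invariant symplectic forms on $\Phi\inv(U) \subset M$ with moment map $\Phi$ and \DH function $\rho|_U$; they need not agree on overlaps, but Proposition \ref{glue forms given DH} assembles them into a global invariant symplectic form $\omega$ on $M$ with moment map $\Phi$ and \DH function $\rho$. The resulting complexity one space has moment image $\Delta$, \DH function $\rho$, and skeleton isomorphic to $S$, and by the matching arrangement in the first paragraph its painting is equivalent to $f$. The main obstacle is the simultaneous requirement of refining $\fU$ and adjusting the $\phi_U$ so that local paintings match $f$, while also ensuring that the induced quotient transitions $h_{UV}$ are realizable by $\Phi$-diffeomorphisms satisfying the cocycle condition; this is exactly where \cite[Proposition~20.1]{globun} is indispensable and plays the role of the ``crucial technical result'' alluded to in the outline of Part I.
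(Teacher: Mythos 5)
Your overall architecture --- transition maps on the quotients, then Proposition~\ref{glue lifts} to lift to the manifold level, then Proposition~\ref{glue forms given DH} to glue the symplectic forms --- is the same as the paper's, but your use of \cite[Proposition~20.1]{globun} misrepresents what that result actually says, and the argument breaks at exactly the step you delegate to it. As reformulated in Proposition~\ref{technical}, the theorem does not ``upgrade a painting-compatible homeomorphism between quotients to a $\Phi$-diffeomorphism''. Rather, after refining the cover, it \emph{constructs from scratch} a system of $\Phi$-diffeomorphisms $h_{VU}$ that already satisfies the cocycle identity, \emph{and} it supplies the essential criterion: any tall complexity one space that localizes to the $M_U$'s with transition maps inducing these $h_{VU}$'s has painting equivalent to $f$. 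Your version --- build homeomorphisms $\phi_U$ by Proposition~\ref{trivialize M mod T}, adjust them to match $f$ on the skeleton, take $h_{UV}=\phi_U^{-1}\circ\phi_V$, then ``upgrade'' --- leaves two genuine holes. First, composing a $\phi_U$ with a fiber-preserving self-homeomorphism of $(\Delta\cap U)\times\Sigma$ produces only a homeomorphism, and there is no lemma in the paper (nor in \cite{globun}) that takes a cocycle of homeomorphisms and replaces it by a cocycle of $\Phi$-diffeomorphisms; a per-overlap replacement of $h_{UV}$ by some $\Phi$-diffeomorphism would not preserve the cocycle identity, and ``after a further refinement of $\fU$'' is not an argument for why it would. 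Second, you assert in one clause that the glued space's painting is equivalent to $f$ ``by the matching arrangement in the first paragraph'', but this is precisely the hard content --- paintings are defined up to homotopy \emph{through paintings} and the relation between local trivializations and the global painting is what the second half of Proposition~\ref{technical} controls. You are in effect trying to reprove Proposition~\ref{technical} by hand in the first two paragraphs, and that attempt is incomplete.

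The fix is simple: invoke Proposition~\ref{technical} (i.e.\ \cite[Proposition~20.1]{globun}) as stated. It hands you, after a refinement, the cocycle of $\Phi$-diffeomorphisms $h_{VU}$ \emph{together with} the guarantee that any tall complexity one space obtained by gluing with these transitions has painting equivalent to $f$. Then your remaining steps --- Proposition~\ref{glue lifts} to lift to $\PhiT$-diffeomorphisms satisfying the cocycle, gluing the $M_U$ into a manifold $M$ with proper invariant $\Phi$, transporting the $\omega_U$ and applying Proposition~\ref{glue forms given DH} --- are correct and match the paper's proof.
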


\begin{proof}
By Proposition 20.1 from the elephant \cite{globun} 
(see Proposition~\ref{technical} below),
after (possibly) passing to a refinement of $\fU$, there
exists $\Phi$-diffeomorphisms
$h_{V U} \colon M_U/T|_{U \cap V} \to  M_V/T|_{U \cap V}$
such that $h_{W V} \circ h_{V U} = h_{W U}$ on triple intersections
and the following property holds.

\begin{quotation}
If $(M,\omega,\Phi,\calT)$ is a complexity one space
such that for every $U \in \fU$ there
exists a $\PhiT$-diffeomorphism $\lambda_U \colon M|_U \to M_U$
so that $h_{V U}$ is the map induced by
the composition $\lambda_V \circ (\lambda_U)\inv$,
then the painting associated to $M$ is equivalent to $f$.
\end{quotation}

By Proposition \ref{glue lifts}, after passing
to a refinement of $\fU$, there exist $\PhiT$-diffeomorphisms
$g_{VU} \colon M_U|_{U \cap V} \to M_V|_{U \cap V}$ 
that lift $h_{VU}$ and such that
$g_{WV} \circ g_{VU} = g_{WU}$
on every triple intersection.

We use these $\PhiT$-diffeomorphisms to glue together the manifolds~$M_U$.
This gives an oriented $2n$-dimensional manifold $M$ with a $T$ action 
and a $T$-invariant proper map $\Phi \colon M \to \calT$. 
Moreover, there exists a  $\PhiT$-diffeomorphism
$\lambda_U \colon M|_U \to M_U$ for each $U \in \fU$
such that $g_{VU} = \lambda_V \circ (\lambda_U)^{-1}$
on each double intersection.

For each $U \in \fU$, the pullback $\omega_U' := \lambda_U^* \omega_U$  
is a $T$-invariant symplectic form on $\Phi\inv(U)$
with moment map $\Phi|_U$ and \DH function $\rho|_U$
and such that $\omega_U'$  
is compatible with the  given orientation.
Therefore, by Proposition \ref{glue forms given DH},
there exists a $T$-invariant symplectic form $\omega$ on $M$
with moment map $\Phi$ and \DH function $\rho$ 
and such that $\omega$ is compatible with the given orientation.
Finally, by the property above, the painting associated 
to $(M,\omega,\Phi,\calT)$ is equivalent to $f$.
\end{proof}

For the reader's convenience, we now reformulate
Proposition 20.1 from \cite{globun}:

\begin{Proposition} \labell{technical}
Let $\calT$ be an open subset of $\ft^*$
and $\Delta \subset \calT$ a convex closed subset.
Let $f \colon S \to \Sigma$ be a painting, where $\Sigma$
is a closed oriented surface of genus $g$
and $(S,\pi)$ is a skeleton over $\calT$.
Let $\fU$ be a cover of $\calT$ by convex open sets.
For each $U \in \fU$, let $(M_U,\omega_U,\Phi_U,U)$ be a 
tall complexity one space of genus $g$ over $U$,
so that $\image \Phi_U = U \cap \Delta$
and so that the set of exceptional orbits $(M_U)_\exc$
is isomorphic to the restriction $S|_U := S \cap \pi\inv(U)$.

Then, after possibly refining the open cover,
one can associate to each $U$ and $V$ in $\fU$ a 
$\Phi$-diffeomorphism
$h_{V U} \colon M_U/T|_{U \cap V} \to  M_V/T|_{U \cap V}$
such that $h_{W V} \circ h_{V U} = h_{W U}$,
and such that the following holds.

If $(M,\omega,\Phi,\calT)$ is a 
tall complexity one space
such that for every $U \in {\mathcal U}$ there
exists a $\PhiT$-diffeomorphism $\lambda_U \colon M|_U \to M_U$
so that $h_{V U}$ is the map induced by
the composition $\lambda_V \circ (\lambda_U)\inv$,
then the painting associated to $M$ is equivalent to $f$.
\end{Proposition}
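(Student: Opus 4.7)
The plan is to construct the $h_{VU}$'s as transition maps of trivializations $\phi_U \colon M_U/T \to (U \cap \Delta) \times \Sigma$ chosen, after refining $\fU$, so that each $\phi_U$ records $f$ over the relevant piece of the skeleton. The smooth analogue of Proposition~\ref{trivialize M mod T} (namely Lemma~18.4 of \cite{globun}, corrected as in Remark~\ref{rmk:errors}) provides for each $U$ such a $\Phi$-diffeomorphism $\phi_U$, whose restriction to each reduced space is orientation preserving. Composing with the projection to $\Sigma$ and the hypothesized skeleton isomorphism $(M_U)_\exc \cong S|_U$ yields a painting $f_U \colon S|_U \to \Sigma$.

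The crucial step is to arrange, after a refinement, that $f_U$ is homotopic through paintings to $f|_{S|_U}$ for each $U$. Refine $\fU$ so that every connected component of each $S|_U$ is contractible; this is achievable using Corollary~\ref{discrete}, the local-model clause in Definition~\ref{def:skeleton}, and Lemma~\ref{lemma:Y}, since for a sufficiently small convex $U$ around $\alpha$ the set $S|_U$ is a disjoint union of small neighbourhoods of the finitely many points of $\pi\inv(\alpha)$, each homeomorphic to a contractible piece of the skeleton of the corresponding local complexity one model. On a contractible component any two maps to $\Sigma$ are homotopic, and since the injectivity of $(\pi,f)$ is an open condition, a general position argument promotes this to a homotopy through paintings. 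Using the uniqueness clause in Proposition~\ref{trivialize M mod T}, compose each $\phi_U$ with $\id \times \xi_U$ for an appropriate orientation preserving self-homeomorphism $\xi_U$ of $\Sigma$ and an intermediate homotopy, so that the modified $f_U$ agrees with $f|_{S|_U}$ up to a homotopy through paintings.

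Now define $h_{VU} := \phi_V\inv \circ \phi_U$ on $M_U/T|_{U \cap V}$. Each $h_{VU}$ is a $\Phi$-diffeomorphism by construction, and the cocycle identity $h_{WV} \circ h_{VU} = h_{WU}$ is immediate. If $(M,\omega,\Phi,\calT)$ is a tall complexity one space whose restrictions $M|_U$ are related to $M_U$ by $\PhiT$-diffeomorphisms $\lambda_U$ inducing these $h_{VU}$ on the quotients, then the maps $\phi_U \circ \ol{\lambda_U}$ agree on overlaps and assemble into a global $\Phi$-diffeomorphism $\Psi \colon M/T \to \Delta \times \Sigma$. Projecting $\Psi$ to $\Sigma$ and restricting to $M_\exc$ produces a painting of $M$ that, under the identification $M_\exc \cong S$ pieced together from the skeleton isomorphisms, coincides locally with $f_U$ and is therefore globally equivalent to $f$.

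The main technical obstacle is to glue the local homotopies $f_U \simeq f|_{S|_U}$ into a single global equivalence of paintings: on a double overlap the two local homotopies a priori differ by a loop in the space of paintings. After further refining to a cover on which all relevant skeleton intersections are contractible, the obstruction to coherence lives in a \v{C}ech cohomology group that vanishes by Proposition~\ref{prop zero coh}, in the same spirit as the cohomological vanishing arguments carried out in Sections~\ref{sec:lifting} and~\ref{sec:gluing}. This is precisely where the flexibility to refine the open cover is used, and it is what allows the cocycle $\{h_{VU}\}$ to faithfully encode the painting $f$.
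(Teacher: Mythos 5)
Note first that this paper does not itself prove Proposition~\ref{technical}: it is a restatement of Proposition~20.1 of~\cite{globun}, which is cited for the argument. So there is no in-paper proof to compare your attempt against; what follows is an evaluation of the sketch on its own terms.

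Your outline --- take trivializations $\phi_U$ from the smooth analogue of Proposition~\ref{trivialize M mod T}, arrange that they record $f$ over the skeleton, and set $h_{VU} := \phi_V^{-1}\circ\phi_U$ --- is the natural approach, but two steps are glossed over in a way that leaves genuine gaps. First, to certify equivalence of the paintings you need a \emph{global} isomorphism $i\colon M_\exc\to S$, which you assemble from $\iota_U\circ\ol{\lambda_U}$, where $\iota_U\colon (M_U)_\exc\to S|_U$ is the hypothesized skeleton isomorphism. These local pieces glue only if $\iota_V\circ h_{VU}\circ\iota_U^{-1}=\id$ on $S|_{U\cap V}$, and there is no reason this should hold: the $\iota_U$ are merely assumed to exist, and $h_{VU}=\phi_V^{-1}\circ\phi_U$ was defined without reference to them. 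To repair this one would have to choose the $\phi_U$ so that their restrictions to $(M_U)_\exc$ agree with $(\pi,f)\circ\iota_U$ on the nose, i.e., prove a trivialization statement \emph{relative to the skeleton}. That is substantially more than Proposition~\ref{trivialize M mod T} or Lemma~18.4 of \cite{globun} provides, and is exactly where the hard content of \cite[Prop.~20.1]{globun} lives. Second, the appeal to Proposition~\ref{prop zero coh} to kill the coherence obstruction of the local homotopies does not apply: that proposition concerns the presheaves $\fH_A^j$ with coefficients in a fixed abelian group $A$, whereas loops in the space of paintings (an open subset of a mapping space) carry no such abelian-sheaf structure. The cohomological vanishing used in Sections~\ref{sec:lifting} and~\ref{sec:gluing} works because $T$-valued functions and basic forms are abelian; you have not produced a comparable structure here. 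Likewise, the ``general position'' claim that any two paintings on a contractible piece of the skeleton are homotopic \emph{through paintings} is a nontrivial connectedness statement about a configuration-like space of fiberwise-injective maps into $\Sigma$; it does not follow from contractibility of the source when $\pi\inv(\alpha)$ has more than one point.
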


\section*{Part II: Classification}

{

\section{Compatibility of skeleton}
\labell{sec:compatible skeleton}

Let $(M,\omega,\Phi,\calT)$ be a tall complexity one space.
The purpose of this section is to show that
the set $M_\exc$ of exceptional orbits is a tall skeleton over $\calT$,
the moment image $\Phi(M)$ is a convex Delzant subset of $\calT$,
and the set $\Phi(M)$ and skeleton $M_\exc$
are compatible.
See Definitions~\ref{def:skeleton}, \ref{def:delzant}, 
and~\ref{def:compatible}.

We begin with an important observation.

\begin{Lemma} \labell{nonexceptional Y}
Let $Y = T \times_H \C^{h+1} \times \h^0$ be a complexity one model in which 
$\{ [\lambda,0,0] \}$ is
a non-exceptional orbit.  Then, after possibly permuting the coordinates,
$Y = T \times_H \C^h \times \C \times \h^0$ 
and $H$ acts on $\C^h$ through an isomorphism with $(S^1)^h$.
Consequently, every orbit in $Y$ is non-exceptional,
and the image of $Y$ is a Delzant cone.
\end{Lemma}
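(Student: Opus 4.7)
The plan is to analyze what non-exceptionality of $\{[\lambda,0,0]\}$ forces on the weights of the $H$-action on $\C^{h+1}$. Decomposing $\C^{h+1} = \bigoplus_i \C_{a_i}$ into weight spaces of the embedding $H \hookrightarrow (S^1)^{h+1}$ with weights $a_i \in \h^*$, the quadratic moment map takes the form $\Phi_H(z) = \sum_i |z_i|^2 a_i$ (up to a positive constant). Under the identification $\t^* = \h^* \oplus \h^0$ coming from the inner product on $\t$, the subspaces $\h^*$ and $\h^0$ meet only at $0$, so any orbit $[t,z,\nu]$ in the moment fiber of $\{[\lambda,0,0]\}$ must satisfy $\Phi_H(z) = 0$ and $\nu = 0$, while its $T$-stabilizer equals $H$ precisely when $z_i = 0$ for every $i$ with $a_i \neq 0$.

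The key step is to show that \emph{exactly} one of the weights vanishes. Non-exceptionality yields nonzero $z$ arbitrarily close to $0$ with stabilizer $H$ in the moment fiber over $\alpha$, forcing at least one $a_i$ to be zero. Conversely, because $H$ acts as an honest subgroup of $(S^1)^{h+1}$, effectiveness of the action implies that the weights $a_1,\dots,a_{h+1}$ generate the character lattice of $H$, a sublattice of $\h^*$ of rank $h$; hence at most one weight can vanish. Thus exactly one does, and after permuting coordinates we may take $a_{h+1}=0$. The remaining $a_1,\dots,a_h$ are then $h$ generators of a rank-$h$ lattice, so they form a $\Z$-basis of the character lattice. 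This is exactly the statement that the projection of $H \hookrightarrow (S^1)^{h+1}$ onto the first $h$ factors is an isomorphism $H \xrightarrow{\sim} (S^1)^h$, under which $H$ acts standardly on $\C^h$ and trivially on the last $\C$ factor.

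For the two consequences: given any orbit $[t_0,z_0,\nu_0]$, write $z_0 = (w_1,\dots,w_h,w_{h+1})$; its stabilizer is $\prod_{i \leq h,\, w_i = 0} S^1 \subset (S^1)^h \cong H$. Perturbing only the $(h+1)$-st coordinate (on which $H$ acts trivially) preserves both the moment map value and the stabilizer, producing a continuous family of nearby orbits in the same moment fiber with the same stabilizer, so every orbit is non-exceptional. For the image, $\Phi_H(\C^{h+1}) = \R_+ a_1 + \cdots + \R_+ a_h$ since the last weight vanishes, hence $\Phi_Y(Y) = \alpha + \R_+ a_1 + \cdots + \R_+ a_h + \h^0$. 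Lifting each $a_i$ to $\tilde a_i \in \ell^*$ along the split surjection from $\ell^*$ onto the character lattice of $H$ and adjoining a $\Z$-basis of the complementary sublattice $\ell^* \cap \h^0$ yields a $\Z$-basis of $\ell^*$ whose first $h$ vectors generate the cone and whose remaining vectors span the linear part, exhibiting $\Phi_Y(Y)$ as a Delzant cone at $\alpha$.

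The main obstacle is the counting argument forcing exactly one weight to vanish; this is where the effectiveness of the embedding $H \hookrightarrow (S^1)^{h+1}$ enters crucially, yielding simultaneously the isomorphism $H \cong (S^1)^h$ and the integrality needed to verify the Delzant condition on the moment image.
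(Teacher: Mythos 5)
Your proof is correct and follows essentially the same approach as the paper's: non-exceptionality forces the $H$-fixed subspace of $\C^{h+1}$ to be nontrivial, and effectiveness of the $H$-action then forces it to be exactly one-dimensional via a dimension/lattice-rank count, after which the conclusions follow. The paper compresses the second half into the phrase ``a dimension count,'' whereas you spell out the weight-lattice argument and the explicit verification that the image is a Delzant cone, but the underlying argument is the same.
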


\begin{proof}
Inside the model, the set of points that have stabilizer $H$ 
and that lie
in the same moment fiber as $[\lambda,0,0]$ 
is $T \times_H (\C^{h+1})^H \times \{ 0 \}$,
where $(\C^{h+1})^H$ is the subspace fixed by $H$.
Since $\{ [\lambda,0,0] \}$ is not an exceptional orbit,
this subspace is not trivial.
The result then follows from a dimension count.
\end{proof}

We can now prove the main results of this section.

\begin{Lemma} \labell{exceptional is closed}
Let $(M,\omega,\Phi,\calT)$ be a tall complexity one space.
The set $M_\exc$ of exceptional orbits,
labelled by the isotropy representations in $M$
and equipped with the map $\Phibar \colon M_\exc \to \t^*$ 
induced by the moment map, is a tall skeleton over $\calT$.
\end{Lemma}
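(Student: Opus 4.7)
The plan is to verify the three conditions in Definition~\ref{def:skeleton}: first, that $M_\exc$ is closed in $M/T$ (so that it inherits a sensible topology and labelling); second, that the induced map $\Phibar \colon M_\exc \to \calT$ is proper; and third, that $(M_\exc,\Phibar)$ is locally modelled on the exceptional orbits of a tall complexity one Hamiltonian $T$-manifold. All three follow from the local normal form theorem, combined with Lemma~\ref{nonexceptional Y} and Corollary~\ref{tall models}.

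For closedness, I would take an orbit $x \in M$ that is not exceptional and use the local normal form theorem to produce a $T$-invariant neighbourhood $U$ of this orbit together with an equivariant symplectomorphism $U \to W$, where $W$ is an invariant open subset of the corresponding complexity one model $Y = T \times_H \C^{h+1} \times \h^0$ carrying the orbit of $x$ to $\{[t,0,0]\}$. Because this symplectomorphism preserves stabilizers and isotropy representations, $\{[t,0,0]\}$ is non-exceptional in $Y$, and by Lemma~\ref{nonexceptional Y} every orbit in $Y$ is then non-exceptional. Hence every orbit in $U$ is non-exceptional, and the image of $U$ in $M/T$ is an open neighbourhood of the orbit of $x$ disjoint from $M_\exc$. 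This shows that $M_\exc$ is closed in $M/T$.

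For properness, since $(M,\omega,\Phi,\calT)$ is a complexity one space, the moment map $\Phi \colon M \to \calT$ is proper, so its composition with the quotient map gives a proper map $\Phibar \colon M/T \to \calT$. For any compact $K \subset \calT$, the preimage $\Phibar\inv(K)$ in $M/T$ is compact, and its intersection with the closed subset $M_\exc$ is also compact, so $\Phibar|_{M_\exc} \colon M_\exc \to \calT$ is proper.

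For the local model condition, fix $x \in M_\exc$. By the local normal form theorem there is an equivariant symplectomorphism $\phi$ from a $T$-invariant neighbourhood of $x$ in $M$ onto an invariant open subset of the corresponding model $Y = T \times_H \C^{h+1} \times \h^0$, sending $x$ to $\{[t,0,0]\}$ and satisfying $\Phi_Y \circ \phi = \Phi$ (once $\alpha$ is chosen to be $\Phi(x)$). By Corollary~\ref{tall models} the model $Y$ is tall, so $(Y,\omega_Y,\Phi_Y)$ is a tall complexity one Hamiltonian $T$-manifold with exceptional set $Y_\exc \subset Y/T$. Passing to quotients, $\phi$ induces a homeomorphism $\Psi$ from a neighbourhood of $x$ in $M_\exc$ onto an open subset of $Y_\exc$; since equivariant symplectomorphisms carry stabilizers to stabilizers and preserve isotropy representations, $\Psi$ respects the labels, and the intertwining of moment maps gives $\ol{\Phi_Y} \circ \Psi = \Phibar$. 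This verifies the final condition, completing the proof. No step is a genuine obstacle; the content is entirely in organising the consequences of the local normal form together with the two lemmas already established.
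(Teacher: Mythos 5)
Your proof is correct and follows essentially the same route as the paper: closedness of $M_\exc$ via the local normal form theorem together with Lemma~\ref{nonexceptional Y}, and properness of $\Phibar|_{M_\exc}$ as a restriction of the proper map $\Phibar$ to a closed subset. One small simplification is available for the local-model condition: since $(M,\omega,\Phi)$ is itself a tall complexity one Hamiltonian $T$-manifold, the identity map on a neighbourhood of any $s\in M_\exc$ already realises Definition~\ref{def:skeleton} directly, so there is no need to pass to the local model $Y$ and invoke Corollary~\ref{tall models}; the paper disposes of this condition with the phrase ``by construction.''
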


\begin{proof}
By construction, $M_\exc$ satisfies the local requirement 
in the definition of a tall skeleton.
By the local normal form theorem and Lemma~\ref{nonexceptional Y},
the set of non-exceptional orbits is open, and hence $M_\exc$ is closed
in $M/T$.
So the restriction $\Phibar|_{M_\exc} \colon M_\exc \to \calT$ is proper.
\end{proof}

\begin{Lemma} \labell{properties of Y}
The moment image of a tall complexity one model is a Delzant cone.
\end{Lemma}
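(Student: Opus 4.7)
The plan is to compute the image of $\Phi_Y$ explicitly and verify the Delzant condition by a case split on whether the orbit $\{[\lambda,0,0]\}$ is exceptional. From the defining formula $\Phi_Y([t,z,\nu]) = \alpha + \Phi_H(z) + \nu$ one reads off that
\[
\Phi_Y(Y) \;=\; \alpha + C + \h^0,
\]
where $C := \R_+\eta_1 + \cdots + \R_+ \eta_{h+1} \subset \h^*$ is the convex cone generated by the weights $\eta_1,\dots,\eta_{h+1}$ of the $H$-action on $\C^{h+1}$, and $\t^* = \h^* \oplus \h^0$ via the chosen inner product on $\ft$.

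If the orbit $\{[\lambda,0,0]\}$ is non-exceptional in $Y$, then Lemma~\ref{nonexceptional Y} gives (after permuting coordinates) a splitting $Y = T \times_H \C^h \times \C \times \h^0$ with $H$ acting on $\C^h$ through an isomorphism $H \cong (S^1)^h$. The $h$ nonzero weights then form a $\Z$-basis of $\h^* \cap \ell^*$; combining them with any $\Z$-basis of $\h^0 \cap \ell^*$ produces a lattice isomorphism $A\colon \R^d \to \t^*$ with $\Phi_Y(Y) = \alpha + A(\R_+^h \times \R^{d-h})$, which is a Delzant cone.

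In the complementary case, where $\{[\lambda,0,0]\}$ is exceptional, I would extend the $T$-action on $Y$ to the action of $T' := T \times_H (S^1)^{h+1}$, with the new factor $(S^1)^{h+1}$ acting by coordinatewise scaling on $\C^{h+1}$. Since $\dim T' = d + 1 = \tfrac12 \dim Y$, this turns $Y$ into a (non-compact) symplectic toric $T'$-manifold, whose $T'$-moment image $P \subset (\t')^*$ is a Delzant cone by the standard (non-compact) Delzant construction. The $T$-moment image is then $\pi(P)$, where $\pi\colon (\t')^* \to \t^*$ is the lattice-preserving projection dual to $T \hookrightarrow T'$; its kernel $K := \ker\pi$ is one-dimensional.

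The main obstacle is to show that $\pi(P)$ is itself a Delzant cone, since projections of Delzant cones along one-dimensional kernels need not be Delzant. Here tallness of $Y$ is decisive: by the analysis in Lemma~\ref{lemma:Y}, the reduced space $\Phi_H^{-1}(0)/H$ at $\alpha$ is two-dimensional rather than a single point, which forces a nontrivial positive linear relation $\sum_j c_j \eta_j = 0$ with $c_j \geq 0$ and not all zero. This relation provides a lattice vector spanning $K$ and, through the toric construction, shows that $K$ lies inside the affine hull of a proper face of $P$ rather than in a pointed direction. Under this condition one can choose adapted $\Z$-bases of $\h^* \cap \ell^*$ and $\h^0 \cap \ell^*$ so that quotienting by $K$ writes $\pi(P) = \alpha + C + \h^0$ in the form $\alpha + A(\R_+^k \times \R^{d-k})$, with $A\colon \R^d \to \t^*$ a lattice isomorphism; the $\R_+^k$ factor is generated by the extreme rays of $C$, and the $\R^{d-k}$ factor absorbs $\h^0 \cap \ell^*$ together with the linearity space of $C$ in $\h^*$.
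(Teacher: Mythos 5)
Your argument takes a genuinely different route from the paper, and the crucial case is only sketched, with a lattice step that does not go through as stated.

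The paper's proof is much more direct: since $Y$ is tall, Lemma~\ref{lemma:Y} produces a \emph{non-exceptional} orbit $x$ in the fiber $\Phi_Y\inv(\alpha)$, where $\alpha=\Phi_Y([\lambda,0,0])$. The local model $Y_x$ at $x$ then has Delzant moment image by Lemma~\ref{nonexceptional Y}, and by the local normal form theorem and stability the images of $\Phi_Y$ and $\Phi_{Y_x}$ agree near $\alpha$; since both are invariant under dilations about $\alpha$, they coincide. This avoids the case split on whether the central orbit is exceptional altogether.

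Two concrete problems with your exceptional-case argument. First, the inner-product splitting $\t^* = \h^* \oplus \h^0$ does not restrict to a splitting of the weight lattice, so ``a $\Z$-basis of $\h^* \cap \ell^*$ together with a $\Z$-basis of $\h^0 \cap \ell^*$'' is not in general a $\Z$-basis of $\ell^*$. For instance, with $T=(S^1)^2$, $\ell^*=\Z^2$, and $H$ the diagonal circle, one has $\h^*=\R(1,1)$ and $\h^0=\R(1,-1)$, and $(\ell^*\cap\h^*)\oplus(\ell^*\cap\h^0)=\Z(1,1)\oplus\Z(1,-1)$ has index two in $\Z^2$. This undermines the construction of the lattice isomorphism $A$ in both of your cases, not just the exceptional one. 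Second, the claim that tallness forces $K=\R\xi$ to lie in the affine hull of a proper face of $P$ is not established and can fail: with $T=H=S^1$ acting on $\C^2$ with weights $\eta_0=1,\eta_1=-1$, one gets $\xi=(1,1)$, $P\cong\R_+^2$, and $K=\R(1,1)$ meets no face's affine hull (here the conclusion happens to be trivial because the moment image is all of $\t^*$, but your stated mechanism does not detect that). Beyond these points, the final assertion that one can ``choose adapted bases so that quotienting by $K$ writes $\pi(P)$ in Delzant form'' is exactly the content that needs proof and is not given. You would do better to follow the paper's strategy: use Lemma~\ref{lemma:Y} to move to a non-exceptional orbit in the same fiber, apply Lemma~\ref{nonexceptional Y} there, and transport the Delzant property back via local normal form, stability, and conicity.
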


\begin{proof}
Let $Y = T \times_H \C^{h+1} \times \h^0$
be a tall complexity one model with moment map $\Phi_Y$,
and let $\alpha = \Phi_Y ([\lambda,0,0])$.
By Lemma~\ref{lemma:Y}, there exists a non-exceptional orbit $x$
in $\Phi_Y\inv(\alpha)$.
Let $Y_x$ be the corresponding complexity one model.
By Lemma~\ref{nonexceptional Y},
$\image \Phi_{Y_x}$ is a Delzant cone at~$\alpha$.
By the local normal form theorem
and the stability of the moment map for $Y_x$ and for $Y$
(see Section~\ref{sec:background}),
there exists a neighbourhood $U$ of $\alpha$ in $\calT$
such that $U \cap \image \Phi_Y = U \cap \image \Phi_{Y_x}$.
Because $\image \Phi_Y$ and $\image \Phi_{Y_x}$ are invariant 
under dilations about $\alpha$, this implies that they are equal.
\end{proof}

\begin{Lemma} \labell{compatible}
Let $(M,\omega,\Phi,\calT)$ be a tall complexity one space.
Then the moment image $\Phi(M)$ is a convex Delzant subset of $\calT$
that is compatible with the tall skeleton $M_\exc$.
\end{Lemma}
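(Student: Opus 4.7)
\medskip

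\noindent\textbf{Proof proposal.} The statement has three parts: that $\Phi(M)$ is convex, that it is a Delzant subset of $\calT$, and that it is compatible with $(M_\exc,\Phibar)$. Convexity of $\Phi(M)$ is delivered directly by the convexity package of Section~\ref{sec:background}, so nothing new needs to be proved there. Closedness of $\Phi(M)$ in $\calT$ follows immediately from the properness of $\Phi \colon M \to \calT$ (a proper continuous map into a Hausdorff space has closed image).

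The heart of the argument is local: I will show that for every $\alpha \in \Phi(M)$ there is a neighbourhood $U$ of $\alpha$ in $\calT$ and a Delzant cone $C$ at $\alpha$ with $\Phi(M) \cap U = C \cap U$, and that when $\alpha = \pi(s)$ for some $s \in M_\exc$, one can take $C = C_s$. Fix $\alpha \in \Phi(M)$ and pick an orbit $x \in \Phi^{-1}(\alpha)$; when $\alpha \in \pi(M_\exc)$, choose $x$ to be (a representative of) the exceptional orbit $s$ with $\pi(s) = \alpha$. By the local normal form theorem, a $T$-invariant neighbourhood of $x$ in $M$ is equivariantly symplectomorphic, respecting moment maps, to an invariant open neighbourhood of $\{[t,0,0]\}$ in the corresponding complexity one model $Y_x$. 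By Corollary~\ref{tall models}, $Y_x$ is tall, and then by Lemma~\ref{properties of Y} its moment image $\image \Phi_{Y_x}$ is a Delzant cone at~$\alpha$. In the exceptional case, this cone is exactly $C_s$, by Definition~\ref{moment cone}.

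To conclude, I will invoke the stability property from the convexity package, applied both to $M$ and to the local model $Y_x$: stability says that the moment map is open onto its image, so the image of any invariant neighbourhood of $x$ (respectively $\{[t,0,0]\}$) contains an open neighbourhood of $\alpha$ in $\Phi(M)$ (respectively in $\image \Phi_{Y_x}$). Combined with the equivariant symplectomorphism from the local normal form, this yields a neighbourhood $U$ of $\alpha$ in $\calT$ on which $\Phi(M)$ and $\image \Phi_{Y_x}$ coincide. This simultaneously establishes that $\Phi(M)$ is a Delzant subset and that it is compatible with the skeleton at every $s \in M_\exc$.

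The main subtlety I expect to have to pin down is the passage from ``the images agree locally in $Y_x$'' to ``the images agree in a neighbourhood of $\alpha$ in $\calT$.'' The local normal form only gives agreement of a neighbourhood of the orbit $x$ in $M$ with a neighbourhood of $\{[t,0,0]\}$ in $Y_x$, not with all of $Y_x$; the identification of their moment images on a common neighbourhood of $\alpha$ therefore requires stability on both sides, and a shrinking argument to choose $U$ small enough that it lies inside the image of both chosen neighbourhoods. Once that is done, the rest is bookkeeping.
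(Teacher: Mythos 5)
Your proposal is correct and follows essentially the same route as the paper's proof: convexity and closedness from the convexity package and properness, then for each $\alpha$ the local normal form theorem together with stability of the moment map on both $M$ and the model $Y_x$ gives $\Phi(M)\cap U=\image\Phi_{Y_x}\cap U$, with $\image\Phi_{Y_x}$ a Delzant cone by Corollary~\ref{tall models} and Lemma~\ref{properties of Y}, and equal to $C_s$ when $x$ is exceptional. The subtlety you flag about needing stability on both sides and a shrinking of $U$ is exactly the point the paper's (terse) proof handles by citing stability for both $Y$ and $M$.
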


\begin{proof}
Because $\calT$ is convex, $M$ is connected, 
and $\Phi \colon M \to \calT$ is proper,
$\Phi(M)$ is a convex closed subset of $\calT$;
see Section~\ref{sec:background}.
Let $x$ be a T-orbit in $M$, let $Y$ be the corresponding model, 
and let $\alpha = \Phibar(x)$.
By the local normal form theorem, stability for the moment map on $Y$, 
and stability of the moment map on $M$, 
there exists a neighbourhood $U$ 
of $\alpha$ such that $\Phi(M) \cap U = \image \Phi_Y \cap U$.
The claim now follows from Corollary \ref{tall models} and
Lemma~\ref{properties of Y}. 
\end{proof} 

}

{
\section{\DH functions for tall complexity one models}
\labell{sec:DH for model}

{

The purpose of this section is to define the \DH functions
for truncations of a tall complexity one model
(Definition~\ref{DH truncation})
and to prove their basic properties
(Corollaries~\ref{exists function} and~\ref{difference is integral affine}).
These functions were used in Section~\ref{sec:intro}
to define \emph{compatibility} of a (Duistermaat-Heckman) function
and a tall skeleton; see Definition~\ref{compatible rho}.

\begin{Lemma} \labell{xi exist}
Let $Y = T \times_H \C^{h+1} \times \h^0$ be a tall complexity one model.
The torus $$ G := T \times_H (S^1)^{h+1}$$
acts faithfully on this model.
Let $i_T \colon T \to G$ denote the inclusion map.
Then there exists a unique $(h+1)$-tuple of non-negative integers
$(\xi_0,\ldots,\xi_h)$ such that the following sequence is 
well defined and exact:
\begin{gather} \labell{short exact G}
\begin{CD}
\{ 1 \} @>>> T @> i_T >> G @> P >> S^1 @>>> \{ 1 \} ,
\end{CD} \\
\mbox{where} \quad P([\lambda,a]) = a^\xi := \prod_{k=0}^h a_k^{\xi_k}. 
\nonumber
\end{gather}
The sequence
\begin{equation} \labell{short exact}
\begin{CD}
 \{ 1 \} @>>> H @> \chi >> (S^1)^{h+1} 
   @> a \mapsto \prod_{k=0}^h a_k^{\xi_k} >> S^1 @>>> \{ 1 \}
\end{CD}
\end{equation}
is also exact, where  
$\chi \colon H \to (S^1)^{h+1}$ is the embedding through which
$H$ acts on $\C^{h+1}$.
\end{Lemma}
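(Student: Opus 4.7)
The plan is to realize the short exact sequence \eqref{short exact G} as the defining sequence for the quotient torus $G/T \cong S^1$, determining $\xi$ up to sign from exactness, and then pin down the correct sign using the tallness hypothesis. I would begin by verifying the preliminary structural claims: $G = T \times_H (S^1)^{h+1}$ is a compact connected abelian Lie group of dimension $\dim T + 1$ because it is a quotient of the connected abelian group $T \times (S^1)^{h+1}$ by the diagonal image of $H$. The inclusion $i_T \colon T \to G$ is injective, since $[\lambda, 1] = [1,1]$ forces $\lambda \in H$ with $\chi(\lambda) = 1$, and $\chi$ is injective because the isotropy representation of a faithful $T$-action on the normal slice is necessarily faithful. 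A dimension count then gives $G/T \cong S^1$, with natural identification $G/T \cong (S^1)^{h+1}/\chi(H)$.

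Next, any character of $G$ that kills $T$ factors through $G/T \cong (S^1)^{h+1}/\chi(H)$, so on the $(S^1)^{h+1}$ factor it has the form $a \mapsto a^\xi$ for some $\xi \in \Z^{h+1}$ with $a^\xi|_{\chi(H)} \equiv 1$, i.e., $\xi \in \ker \chi^*$, where $\chi^* \colon \Z^{h+1} \to \hat H$ is the dual of $\chi$. Since $\chi$ is injective, $\chi^*$ is surjective, and since $\hat H$ has rank $h$, the kernel $\ker \chi^*$ has rank $1$. For the sequence \eqref{short exact G} to be exact at $G$, we need $\ker(a \mapsto a^\xi) = \chi(H)$, not merely $\chi(H_0)$; counting connected components shows that $\xi$ must equal $|\pi_0(H)| \cdot \eta$ for the primitive generator $\eta$ of the larger lattice $\ker \chi_\R^* \cap \Z^{h+1}$, so only the sign of $\eta$ remains undetermined, and surjectivity of the resulting map onto $S^1$ follows automatically once $\xi \neq 0$.

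The main obstacle is resolving this sign ambiguity using tallness, which is the only place that hypothesis enters. I would use the description of the level set $\Phi_Y^{-1}(\alpha)$ to identify the reduced space there with $\Phi_H^{-1}(0)/H \subset \C^{h+1}/H$; tallness, namely that this reduced space is a $2$-manifold (in particular nontrivial beyond the origin), gives a nonzero $z \in \C^{h+1}$ with $\sum_k |z_k|^2 \chi_k = 0 \in \h^*$. The vector of squared moduli provides a nonzero nonnegative real element of the one-dimensional space $\ker \chi_\R^*$, so every nonzero element of that line has coordinates of a common weak sign. Exactly one of the two signs of $\eta$ therefore yields $\eta_k \geq 0$ for all $k$, and with this sign $\xi = |\pi_0(H)| \cdot \eta$ is the unique nonnegative tuple making \eqref{short exact G} exact. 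Finally, the exactness of \eqref{short exact} restates that $\chi$ is injective and $\ker(a \mapsto a^\xi) = \chi(H)$, both of which are already established, together with the surjectivity of the character, which follows from $\xi \neq 0$.
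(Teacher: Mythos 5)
Your proof is correct and takes essentially the same route as the paper's: reduce to the quotient $(S^1)^{h+1}/\chi(H) \cong S^1$ to get $\xi$ unique up to sign, then use tallness to produce a nonzero nonnegative vector $(|z_k|^2)$ in the one-dimensional kernel of the weight map, which fixes the sign. The paper goes straight from ``$(S^1)^{h+1}/\chi(H)$ is a one-dimensional compact connected Lie group'' to existence and sign-uniqueness of $\xi$ without the $|\pi_0(H)| \cdot \eta$ factorization you spell out, and it states the one-dimensionality of $\ker \chi^*_{\R}$ over $\R$ rather than over $\Z$; also note a small notational slip on your part where you write $\chi_k$ for the weights $\eta_k$ while already using $\eta$ for the primitive lattice generator, but the argument is unaffected.
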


\begin{proof}
Lemma~\ref{xi exist} follows from Lemmas 5.2, 5.3, and 5.8 of~\cite{locun}.
For completeness, we give a direct argument.

For any integers $\xi_0,\ldots,\xi_h$, the sequence~\eqref{short exact G}
is well defined and exact if and only if the sequence~\eqref{short exact}
is exact.  

Because the quotient $(S^1)^{h+1}/\chi(H)$ is a one dimensional
compact connected Lie group, there exist integers 
$\xi_0 , \dots , \xi_h$ such that
\eqref{short exact} is exact;
these integers are unique up to 
replacing $(\xi_0,\ldots,\xi_h)$
by $(-\xi_0,\ldots,-\xi_h)$.

Let $\eta_0,\ldots,\eta_h$ be the weights for the $H$ action on $\C^{h+1}$.
Differentiating the relation $\chi(h)^\xi = 1$
gives $\sum_{k=0}^h \xi_k \eta_k = 0$.

The quadratic moment map for the $H$ action on $\C^{h+1}$
is given by $\Phi_H(z) = \sum_{k=0}^h \pi |z_k|^2 \eta_k$.
Because $Y$ is tall, the level set $\Phi_H\inv(0)$ contains 
more than one orbit.
Hence, there exist complex numbers $z_0,\ldots,z_h$, not all zero,
such that $\sum_{k=0}^h \pi |z_k|^2 \eta_k = 0$.

Because the action is effective,
the space of solutions $(x_0,\dots,x_h)$ 
of the equation $\sum x_k \eta_k = 0$
is one dimensional. Hence, the previous two paragraphs imply that the vectors 
$(\xi_0,\ldots,\xi_h)$ and $(|z_0|^2,\ldots,|z_h|^2)$ are proportional.
So, after possibly replacing $(\xi_0,\ldots,\xi_h)$ 
by $(-\xi_0,\ldots,-\xi_h)$,
the integers $\xi_0,\ldots,\xi_h$ are all non-negative.
\end{proof}

\begin{Definition} \labell{def:J}
We call the map $P \colon G \to S^1$ described above
the \textbf{defining monomial}; cf.~\cite[Definition~5.12]{locun}.
A \textbf{complementary circle} to $T$ in $G$
is a homomorphism $J \colon S^1 \to G$ such that $P \circ J = \id_{S^1}$.
\end{Definition}

Let $\g_\Z$ denote the integral lattice in $\g$
and $\g_\Z^*$ the weight lattice in $\g^*$.  Thus,
$$\g_\Z \cong \Hom(S^1,G)
\quad \mbox{and} \quad
\g^*_\Z \cong \Hom(G,S^1).$$

\begin{Remark}\labell{rmk:J}
Complementary circles always exist.  To see this, note that
the short exact sequence~\eqref{short exact G} gives rise to a short exact
sequence of lattices, $\{ 0 \} \to \ell \to \g_\Z \to \Z \to \{ 0 \}$.
Any splitting of this sequence determines a complementary circle
$J \colon S^1 \to G$ to $T$ in $G$.
\end{Remark}

Let $(M,\omega,\Phi)$ be a Hamiltonian $T$-manifold,
and let $A \subset M$ be a measurable subset.
The \textbf{\DH measure for the restriction of~$\mathbf{\Phi}$ to~$\mathbf{A}$}
is the push-forward by the moment map of the restriction to $A$
of the Liouville measure; a real valued function 
on $\Phi(A)$ is the \textbf{\DH function} 
for this restriction if its product with Lebesgue measure is the \DH measure.
As before, it is almost unique; see the discussion
on page~\pageref{page:unique}.

\begin{Definition} \labell{DH truncation} 
Let $Y=T \times_H \C^{h+1} \times \fh^0$ be a tall complexity one model
with moment map $\Phi_Y \colon Y \to \t^*$.
A real valued function $\rho$ on a subset of $\t^*$
is the \textbf{\DH function for a truncation of the model} if
there exist 
a complementary circle $J$ to $T$ in $G$ and a positive number $\kappa$
such that $\rho$ is the \DH function
for the restriction of $\Phi_Y$
to the subset 
\begin{equation} \labell{YJkappa}
 Y_{J,\kappa} := \varphi_J\inv((-\infty,\kappa])
\end{equation}
of $Y$, where $\varphi_J \colon Y \to \R$
is the moment map for the resulting circle action on $Y$,
normalized by $\varphi_J([\lambda,0,0]) = 0$.
\end{Definition}

Let $Y$ be a tall complexity one space, 
and let $\alpha = \Phi_Y([\lambda,0,0])$.
We will show in Corollaries~\ref{exists function} 
and~\ref{difference is integral affine}
that there exist \DH functions
for truncations of the model $Y$,
that they are well defined and continuous on a neighbourhood
of $\alpha$ in $\image \Phi_Y$, 
and that the difference of every two such functions 
is equal to an integral affine function
on some neighbourhood of $\alpha$ in $\image \Phi_Y$.

\begin{Lemma} \labell{lemma:iso}
Let $Y = T \times_H \C^{h+1} \times \h^0$ be  a tall
complexity one model,
and let $\tPhi_Y \colon Y \to \g^*$ be a moment map
for the action of $G := T \times_H (S^1)^{h+1}$.
There exists a linear isomorphism
\begin{equation} \labell{iso}
\begin{CD}
 \g^*  @>>> \h^0 \times \R^{h+1} 
\end{CD}
\end{equation}
with the following properties.
\begin{enumerate}
\item
The following diagram commutes:
$$ \xymatrix{
 \g^* \ar[d]_{i_T^*} \ar[rrr]^{\eqref{iso}} 
       &&& \h^0 \times \R^{h+1} 
         \ar[d]^{(\nu, s) \mapsto (\chi^*(s),\nu)} \\
 \t^* \ar[rrr]^{\cong} &&& \h^* \times \h^0,
} $$
where the bottom isomorphism is induced by the inner product on $\t$
that we have chosen,
$i_T \colon T \to G$ is the inclusion map,
and $\chi \colon H \to (S^1)^{h+1}$ is the embedding
through which $H$ acts on $\C^{h+1}$.
\item
The composition
$$ \begin{CD}
 Y @> \tPhi_Y >> \g^* @> \eqref{iso} >> \h^0 \times \R^{h+1}
\end{CD}$$
has the form
\begin{equation} \labell{intermediate}
 [\lambda,z,\nu] \ \mapsto \  
 \big( \nu \, , \, 
        \textstyle \pi |z_0|^2 \, , \, \ldots , 
\textstyle \pi |z_h|^2 \big) + \text{constant}.
\end{equation}
\item
Let $\xi$ be the element of $\g_\Z^*$
that corresponds to the defining monomial $P \in \Hom(G,S^1)$,
and let
$\xi_0,\ldots,\xi_h$ be the exponents of the defining monomial.
Then the isomorphism~\eqref{iso} carries $\xi$ 
to $(0,(\xi_0,\ldots,\xi_h))$.
\end{enumerate}
\end{Lemma}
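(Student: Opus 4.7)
The plan is to start from the short exact sequence of Lie groups implicit in the definition $G = T \times_H (S^1)^{h+1}$, namely $1 \to H \to T \times (S^1)^{h+1} \to G \to 1$, where $H$ is embedded via $h \mapsto (i_H(h), \chi(h)^{-1})$. Dualizing at the Lie algebra level gives an exact sequence
\[
0 \to \g^* \to \t^* \times \R^{h+1} \to \h^* \to 0,
\]
in which $\g^*$ is realized as the subspace of pairs $(\mu,s)$ with $i_H^*\mu = \chi^*s$, where $\chi^* \colon \R^{h+1} \to \h^*$ is the transpose of $d\chi$. I would then use the chosen inner product on $\t$ to split $\t^* \cong \h^* \oplus \h^0$; every pair in $\g^*$ then has the unique form $(\chi^*(s) + \nu,\, s)$ with $\nu \in \h^0$, and this is my isomorphism $\g^* \to \h^0 \times \R^{h+1}$, sending $(\chi^*(s)+\nu, s) \mapsto (\nu, s)$. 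Property (1) is then immediate: the dual $i_T^*$ of the inclusion $T \hookrightarrow G$ is the projection $(\mu,s) \mapsto \mu$, which in the chosen splitting reads $(\nu, s) \mapsto (\chi^*(s), \nu)$.

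For property (2), I would realize $Y$ as the symplectic reduction at $0 \in \h^*$ of $T^*T \times \C^{h+1} = T \times \h^* \times \C^{h+1}$ by the $H$-action $h \cdot (t,\nu,z) = (th^{-1}, \nu, \chi(h)z)$. This ambient space carries the obvious Hamiltonian $T \times (S^1)^{h+1}$-action with moment map $(t,\nu,z) \mapsto (\nu,\pi|z_0|^2,\ldots,\pi|z_h|^2)$ in $\t^* \times \R^{h+1}$. Since this map takes values in the subspace $\g^* \subset \t^* \times \R^{h+1}$ along the zero level of the $H$-moment map, it descends to $Y$ as a moment map for the residual $G$-action, which under the isomorphism of Property (1) has exactly the form stated in \eqref{intermediate}, up to the additive constant that inevitably arises because $\tPhi_Y$ is itself only determined up to a constant.

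For property (3), the defining monomial $P([\lambda,a]) = a^\xi$ has differential $(\lambda_0, a_0) \mapsto \sum_k \xi_k (a_0)_k$ on $\t \times \R^{h+1}$, and this descends to $\g$ precisely because $\sum_k \xi_k \eta_k = 0$ --- an identity proved in the course of Lemma~\ref{xi exist}, which is equivalent to $\chi^*(\xi_0,\ldots,\xi_h) = 0$. Thus the pair $(\nu,s) = (0,(\xi_0,\ldots,\xi_h))$ is a well-defined element of $\g^* \subset \t^* \times \R^{h+1}$, and evaluating it on the class of $(\lambda_0, a_0) \in \t \times \R^{h+1}$ returns $\sum_k \xi_k (a_0)_k$, matching $dP$ and identifying this point with the weight $\xi$.

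The main place to be careful is consistency of sign and normalization conventions: the choice of $H$-action used to build $Y$, the sign in the exact sequence defining $\g$, and the moment map convention \eqref{moment} all have to be lined up so that the formulas in (1) and (2) match without extra signs. These choices propagate into (3) in the form of whether $\xi$ or $-\xi$ appears, but the freedom in choosing the overall sign of $(\xi_0,\ldots,\xi_h)$ already noted in Lemma~\ref{xi exist} lets us fix everything consistently. Beyond these bookkeeping points, the argument is a direct application of the reduction construction of $Y$ combined with the chosen splitting $\t^* = \h^* \oplus \h^0$, and should not encounter any genuine obstruction.
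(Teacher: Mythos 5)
Your proof is essentially the same as the paper's. You use the identical identification $\g^* = \{(\mu,s) \in \t^* \times \R^{h+1} : i_H^*\mu = \chi^*s\}$, the identical splitting $\t^* \cong \h^* \oplus \h^0$ via the inner product, and the identical isomorphism $(\chi^*(s)+\nu,s) \mapsto (\nu,s)$; parts (1) and (3) are then proved by the same direct computations. For part (2), where the paper computes the $T$- and $(S^1)^{h+1}$-moment maps on $Y$ directly, you instead invoke the realization of $Y$ as the symplectic reduction of $T^*T \times \C^{h+1}$ by $H$; this is the same underlying observation packaged slightly differently. One slip to correct: you wrote $T^*T \times \C^{h+1} = T \times \h^* \times \C^{h+1}$, but $T^*T = T \times \t^*$, so the ambient space is $T \times \t^* \times \C^{h+1}$; the $\t^*$-variable on the zero level of the $H$-moment map is $\chi^*(\pi|z_0|^2,\ldots,\pi|z_h|^2) + \nu$ with $\nu \in \h^0$, which is what makes the projection to $\h^0 \times \R^{h+1}$ land on the stated formula \eqref{intermediate}.
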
 

\begin{proof}
Let $i_H \colon H \to T$ denote the inclusion map.
The torus $G$ is the quotient of $T \times (S^1)^{h+1}$
by the image of the $H$ under embedding $a \mapsto (i_H(a)\inv,\chi(a))$.
Hence,
\begin{equation} \labell{g star 1}
   \g^* = \big\{ (\gamma,s) \in \t^* \times \R^{h+1} \ \big| \ 
          i_H^* (\gamma) = \chi^*(s) \big\} .
\end{equation}
Under the identification of $\t^*$ with $\h^* \times \h^0$,
the space~$\g^*$ becomes further identified with 
\begin{equation} \labell{g star 2}
\big\{ (\beta,\nu,s) \in \h^* \times \h^0 \times \R^{h+1} \ \big| \ 
          \beta = \chi^*( s)  \big\} .
\end{equation}
Now consider the composition 
\begin{equation} \labell{composition}
\xymatrixcolsep{2.75pc}
\xymatrix{
  \g^*  \ar[r]^-{\text{ inclusion}} & \t^* \times \R^{h+1}
        \ar[r]^-{\cong} & \h^* \times \h^0 \times \R^{h+1}
        \ar[r]^-{\text{ projection }} & \h^0 \times \R^{h+1} . }
\end{equation}
The projection $(\beta,\nu,s) \mapsto (\nu,s)$ is a linear isomorphism
from the space~\eqref{g star 2} -- which is the image of $\g^*$
in $\h^* \times \h^0 \times \R^{h+1}$ --
to $\h^0 \times \R^{h+1}$.
This proves that the composition~\eqref{composition} is a linear isomorphism.

Moreover, if $(\beta,\nu,s)$ is in~\eqref{g star 2}
then $(\beta,\nu) = (\chi^*(s),\nu)$.  This gives~(1).

The moment map for the $T$ action on $Y$,
as a map to $\h^* \times \h^0$, has the form
$[\lambda,z,\nu] \mapsto (\Phi_H(z),\nu) + \constant$. 
The moment map for the $(S^1)^{h+1}$ action on $Y$
has the form 
$$[\lambda,z,\nu] \mapsto 
\left( \pi |z_0|^2, \ldots, \pi |z_h|^2 \right) + \constant.$$
Therefore, the moment map for the $G$ action on $Y$, as
a map to $\h^* \times \h^0 \times \R^{h+1}$, has the form
$$[\lambda,z,\nu] \mapsto 
\left( \Phi_H(z) , \nu , \pi |z_0|^2, \ldots, \pi |z_h|^2 \right) 
 + \constant.$$
This implies (2). 

Since the restriction of $P \in \Hom(G,S^1)$ to the subtorus $T$ of $G$ 
is trivial, and the restriction of $P$ to the subtorus $(S^1)^{h+1}$ of $G$ 
is the homomorphism $a \mapsto \prod_{k=0}^h a_k^{\xi_k}$,
the natural embedding of $\g^*$
into $\t^* \times \R^{h+1}$ carries $\xi$ to $(0,(\xi_0,\ldots,\xi_h))$.
Projecting to $\h^0 \times \R^{h+1}$, we get~(3).
\end{proof}

}

\begin{Lemma} \labell{sigma}
Let $Y = T \times_H \C^{h+1} \times \h^0$ be a tall complexity one model
with moment map $\Phi_Y \colon Y \to \t^*$.
Let $J \in \Hom(S^1,G)$ be a complementary circle to $T$
in $G := T \times_H (S^1)^{h+1}$,
and let $j$ be the corresponding element of $\g_\Z$. 
Let $\tPhi_Y \colon Y \to \g^*$ be the unique $G$ moment map
that satisfies $i_T^* \circ \tPhi_Y = \Phi_Y$
and 
$\protect{\langle \tPhi_Y ([\lambda,0,0]) , j \rangle = 0}$.
There exists a unique continuous map
$$ \sigma \colon \image \Phi_Y \to \g^* $$
with the following properties.
\begin{enumerate}
\item
$ i_T^*(\sigma(\beta) + t \xi ) = \beta 
\ \text{ for all } \ \beta \in \image \Phi_Y \ \text{and} \ t \in \R$.
\item
$ \image \tPhi_Y = \{ \sigma(\beta) + t \xi \ | \ 
   \beta \in \image \Phi_Y \text{ and } t \geq 0 \} . $
\item $\sigma\big(\Phi_Y([\lambda,0,0]) \big)
 = \tPhi_Y([\lambda,0,0]).$
\end{enumerate}
Here, $i_T \colon T \to G$ is the inclusion map, and 
$\xi \in \g^*_\Z$  corresponds to the defining monomial $P \in \Hom(G,S^1)$.
\end{Lemma}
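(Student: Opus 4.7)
The plan is to pass to the coordinates furnished by Lemma~\ref{lemma:iso}, in which $\g^* \cong \h^0 \times \R^{h+1}$, the projection $i_T^*$ becomes $(\nu,s) \mapsto (\chi^*(s),\nu)$ via the identification $\t^* \cong \h^* \times \h^0$, the element $\xi$ corresponds to $(0,(\xi_0,\ldots,\xi_h))$, and $\image \tPhi_Y$ equals $c + \h^0 \times \R_{\geq 0}^{h+1}$, where $c = \tPhi_Y([\lambda,0,0])$ is the constant in Lemma~\ref{lemma:iso}(2). Dualizing the short exact sequence~\eqref{short exact} of Lemma~\ref{xi exist} shows that $\ker \chi^* = \R \cdot (\xi_0,\ldots,\xi_h)$, so the one-dimensional fibers of $i_T^*$ are precisely translates of $\R\xi$.

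For each $\beta \in \image \Phi_Y$, I would write $\beta = (\chi^*(s_1),\nu_0) \in \h^* \times \h^0$ with $s_1 \in c_{\R^{h+1}} + \R^{h+1}_{\geq 0}$; then the fiber $(i_T^*)\inv(\beta)$ is $\{\nu_0\} \times (s_1 + \R \cdot (\xi_0,\ldots,\xi_h))$, and its intersection with $\image \tPhi_Y$ consists of those $t \in \R$ for which $s_{1,k} - c_k + t\xi_k \geq 0$ holds for every $k$. Because every $\xi_k$ is non-negative and at least one is strictly positive (since $P$ is surjective, by Lemma~\ref{xi exist}), this intersection is a half-line $[t_{\min},\infty)$ whose endpoint is a finite maximum of linear functionals in the coordinates of $\beta$. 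I would then \emph{define} $\sigma(\beta)$ to be the corresponding starting point of this ray in $\g^*$. Properties~(1) and~(2) hold by construction, uniqueness is forced since together they determine $\sigma(\beta)$ as the unique lower endpoint of the ray, and continuity follows from the formula for $t_{\min}$ combined with any continuous (e.g.\ linear) section of the surjection $\chi^*$ over $\chi^*(\R_{\geq 0}^{h+1})$.

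For~(3), substituting $s_1 = (c_0,\ldots,c_h)$ and $\nu_0 = c_{\h^0}$ into the fiber description gives $t_{\min} = 0$, so the starting point of the ray is $c = \tPhi_Y([\lambda,0,0])$ itself. The complementary circle $J$ enters only to fix the inherent one-dimensional ambiguity of $\tPhi_Y$, which lives in $\ker i_T^* = \R \xi$: since $\langle \xi, j \rangle = 1 \neq 0$, the normalization $\langle \tPhi_Y([\lambda,0,0]), j \rangle = 0$ pins down $c$ and thereby the normalization of $\sigma$.

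The main obstacle is the bookkeeping across the several lattice, duality, and inner-product identifications, and in particular verifying that the sign convention chosen for $\xi$ in Lemma~\ref{xi exist} makes the ray point in the correct direction (so that the ``starting point'' is really the lower endpoint rather than the upper one). Once the concrete coordinates of Lemma~\ref{lemma:iso} are in place, the geometry reduces to intersecting a line with a translated rational orthant, so the existence, uniqueness, and continuity of $\sigma$ follow essentially automatically.
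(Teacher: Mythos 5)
Your proposal is correct and is essentially the same argument as the paper's: in both cases $\sigma(\beta)$ is characterized as the lower endpoint of the ray obtained by intersecting the fiber $(i_T^*)^{-1}(\beta)$ with $\image\tPhi_Y \cong \h^0 \times \R_+^{h+1}$ via Lemma~\ref{lemma:iso}. The only cosmetic difference is that the paper packages the construction via an explicit homeomorphism from the boundary piece $\del\R_+^{h'+1} \times \R_+^{h''}$ onto $\chi^*(\R_+^{h+1})$ and takes its inverse $\sigma_H$, rather than writing $t_{\min}$ as a maximum of linear functionals composed with a chosen section of $\chi^*$; the two descriptions coincide and both give continuity in the same way.
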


\begin{proof}
Let $\chi \colon H \to (S^1)^{h+1}$ be the embedding
through which $H$ acts on $\C^{h+1}$.
Let $\xi_0,\dots,\xi_h$ be the (non-negative) exponents of the defining monomial.
By \eqref{short exact},
the level sets of the projection $\chi^* \colon \R^{h+1} \to \h^*$ 
are the lines $s + \R (\xi_0,\ldots,\xi_h)$.

After possibly reordering the coordinates,
we may assume that 
\begin{equation}\labell{xiprop}
\xi_k > 0 \mbox{ for }0 \leq k \leq h' \quad \mbox{and} \quad \xi_k = 0\mbox{ for }h' < k \leq h;
\end{equation}
  let $h'' = h-h'$.
Consider the subset $\del \R_+^{h'+1} \times \R_+^{h''}$
of $\R_+^{h+1}$, consisting of $(h+1)$-tuples of non-negative numbers
in which at least one of the first $h'+1$ entries is equal to zero.
Consider the map
\begin{equation} \labell{homeo'}
\del \R_+^{h'+1} \times \R_+^{h''}
\to  \chi^*\big(\R_+^{h+1}\big)  , \qquad s \mapsto \chi^*(s).
\end{equation}
This map is a bijection because,
by \eqref{xiprop},
the line $s + \R (\xi_0,\ldots,\xi_h)$
meets $\del \R_+^{h'+1} \times \R_+^{h''}$ exactly once
for each $s \in \R_+^{h+1}$.
Restricted to each closed facet, this map coincides with a linear isomorphism,
and hence is open as a map to its image.
It follows that the map \eqref{homeo'} is open.

Since the map \eqref{homeo'} is a homeomorphism, it has a continuous inverse
$$\sigma_H \colon \chi^*\big(\R_+^{h+1}\big) \to \R_+^{h+1}.$$
We claim that $\sigma_H$ has the following properties.
\begin{enumerate}
\item[($1'$)]
$\chi^*(\sigma_H(\beta) + t(\xi_0,\dots,\xi_h) ) = \beta$ for all $\beta \in 
\chi^*\big(\R_+^{h+1}\big)$ and $t \in \R$.
\item[($2'$)]
$\R_+^{h+1} = \{ \sigma_H(\beta) + t(\xi_0,\ldots,\xi_h)
 \ | \ \beta \in \chi^*\big(\R_+^{h+1}\big) \text{ and } t \geq 0 \}.$
\item[($3'$)]
$\sigma_H(0) = 0$.
\end{enumerate}
Properties ($1'$) and ($3'$) follow immediately from the definition of
$\sigma_H$.  To prove ($2'$),
consider $\beta \in \chi^*(\R_+^{h+1})$ and $t \in \R$.
By \eqref{xiprop}, the fact that $\sigma_h(\beta) \in \del \R_+^{h'+1} \times \R_+^{h''}$
implies that $\sigma_H(\beta) + t (\xi_0,\dots,\xi_h)$ lies in $\R_+^{h+1}$ exactly if $t \geq 0$.

We may assume without loss of generality that $\Phi_Y([\lambda,0,0]) = 0$.
Then the identification of $\ft^*$ with $\fh^* \times \fh^0$ carries $\image \Phi_Y$
 onto $\chi^*(\R_+^{h+1}) \times \fh^0$.  Define 
$\sigma \colon \image \Phi_Y \to \g^*$ so that the following diagram commutes:
$$ \xymatrix{
 \image \Phi_Y \ar[d]_{\sigma} \ar[rrr]^{\cong} 
       &&& \chi^*(\R^{h+1}_+) \times \h^0  
         \ar[d]^{(\beta,\nu) \mapsto (\nu,\sigma_H(\beta))} \\
 \g^* \ar[rrr]^{\eqref{iso}} &&& \h^0 \times \R^{h+1}\ ,
} $$
where \eqref{iso} is the isomorphism defined in Lemma~\ref{lemma:iso}.
By part (2) of that lemma, \eqref{iso} carries $\image \tPhi_Y$ 
onto $\fh^0 \times \R_+^{h+1}$.  
Claims (1), (2), and (3) then follow from ($1'$), ($2'$), and ($3'$), respectively,
by Parts (1) and (3) of Lemma~\ref{lemma:iso}.
\end{proof}

A lattice element is \emph{primitive} if it is not a multiple
of another lattice element by an integer that is greater than one. 
The \emph{rational length} of an interval $[x,y]$ with rational slope in $\g^*$ 
is equal to the positive number $k$ such that $y-x = k\xi$
where $\xi$ is a primitive lattice element.

\begin{Lemma} \labell{lemma:preDH}
Let $Y = T \times_H \C^{h+1} \times \h^0$
be a tall complexity one model with moment map 
$\Phi_Y \colon Y \to \t^*$.
Let $J \in \Hom(S^1,G)$ be 
a complementary circle to $T$ in $G:= T \times_H (S^1)^{h+1}$;
let $j$ be the corresponding element of $\g_\Z$; 
and let  $\varphi \colon Y \to \R$ be the moment map
for the resulting circle action, normalized by $\varphi([\lambda,0,0])=0$.
Then the $T \times S^1$ moment
map $(\Phi_Y,\varphi) \colon \Phi_Y \to \t^* \times \R$ is proper,
and each fiber contains at most one $T \times S^1$ orbit.
Moreover, if $\sigma \colon \image \Phi_Y \to \g^*$
is the map given in Lemma~\ref{sigma},
then
\begin{multline}\labell{splitimage}
\image \,  ( \Phi_Y, \varphi ) = \\
  \left\{ (\beta, s) \in \t^* \times \R   \mid 
   \beta \in \image \Phi_Y \text{ and }
   s \geq \left< \sigma(\beta) , j \right> 
   \right\}.
\end{multline}
\end{Lemma}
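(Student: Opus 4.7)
The plan is to identify $(\Phi_Y, \varphi)$ with the $G$-moment map $\tPhi_Y$ via the splitting induced by the complementary circle $J$, and then to read off all three claims from Lemmas~\ref{lemma:iso} and~\ref{sigma}. Since $J$ splits the short exact sequence in Lemma~\ref{xi exist}, the assignment $(\lambda, a) \mapsto i_T(\lambda)\, J(a)$ is an isomorphism $T \times S^1 \to G$; dually, $\g^* \cong \t^* \oplus \R$ via $\alpha \mapsto (i_T^*\alpha, \langle \alpha, j\rangle)$. Under this identification the $T \times S^1$-moment map on $Y$ is $\tPhi_Y$, and the normalization $\varphi([\lambda,0,0]) = 0$ matches the normalization $\langle \tPhi_Y([\lambda,0,0]), j\rangle = 0$ chosen in Lemma~\ref{sigma}. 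Hence $(\Phi_Y,\varphi)(y) = \bigl(i_T^*\tPhi_Y(y),\ \langle \tPhi_Y(y), j\rangle\bigr)$, and all three claims about $(\Phi_Y,\varphi)$ become claims about $\tPhi_Y$.

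For properness and single-orbit fibers I would use the coordinate description from Lemma~\ref{lemma:iso}(2), under which $\tPhi_Y \colon Y \to \h^0 \times \R^{h+1}$ takes the form $[\lambda, z, \nu] \mapsto (\nu,\, \pi|z_0|^2, \ldots, \pi|z_h|^2) + \text{constant}$. The preimage of a compact set then bounds $\nu$ and each $|z_k|$, so it is the image under the quotient $T \times \C^{h+1} \times \h^0 \to Y$ of a compact set, and hence is compact because $T$ is compact. Two points $[\lambda, z, \nu]$ and $[\lambda', z', \nu']$ with the same $\tPhi_Y$-value must satisfy $\nu = \nu'$ and $|z_k| = |z_k'|$ for every $k$; choosing $a \in (S^1)^{h+1}$ with $a z' = z$ componentwise and then adjusting the $T$-factor shows that they lie in a common $G$-orbit.

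For the image formula I would invoke Lemma~\ref{sigma}(2), which gives $\image \tPhi_Y = \{\sigma(\beta) + t\xi : \beta \in \image \Phi_Y,\ t \geq 0\}$, and compute the two components of $\sigma(\beta) + t\xi$ under $\g^* \cong \t^* \oplus \R$. The $\t^*$-component equals $i_T^*\sigma(\beta) + t\, i_T^*\xi = \beta$, using Lemma~\ref{sigma}(1) together with $i_T^*\xi = 0$ (the defining monomial $P$ is trivial on $T$ by exactness of~\eqref{short exact G}). The $\R$-component equals $\langle \sigma(\beta), j\rangle + t\, \langle \xi, j\rangle = \langle \sigma(\beta), j\rangle + t$, since $P \circ J = \id_{S^1}$ forces $\langle \xi, j\rangle = 1$. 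Letting $t$ range over $[0,\infty)$ produces exactly the right-hand side of~\eqref{splitimage}.

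The substantive work is therefore all packaged in Lemmas~\ref{lemma:iso} and~\ref{sigma}; the main obstacle in writing the proof is purely notational bookkeeping — keeping straight the identification $T \times S^1 \cong G$, the two normalizations of moment maps, and the pairings $\langle \xi, j\rangle = 1$ and $i_T^*\xi = 0$ coming from the defining monomial and its complementary splitting.
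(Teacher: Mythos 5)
Your proof is correct and follows essentially the same approach as the paper: identify $(\Phi_Y,\varphi)$ with $\tPhi_Y$ via the isomorphism $(i_T^*,j)\colon\g^*\to\t^*\times\R$ induced by the splitting $J$, deduce properness and single-orbit fibers from the coordinate form in Lemma~\ref{lemma:iso}(2), and read off the image formula from Lemma~\ref{sigma}(1)--(2) using $\langle\xi,j\rangle=1$. The only difference is that you spell out the routine details (bounding $\nu$ and $|z_k|$ for properness, the component computation for the image) that the paper leaves implicit.
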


\begin{proof}
Let $i_T \colon T \hookrightarrow G$
be the inclusion map.
Let $\tPhi_Y \colon Y \to \g^*$ be a $G$ moment map, 
normalized so that $(i_T^*,j) \circ \tPhi_Y = (\Phi_Y,\varphi)$.
Note that, in particular, $\langle \tPhi_Y([\lambda,0,0],j \rangle =
\varphi([\lambda,0,0]) = 0$; cf.~Lemma~\ref{sigma}. 
By Lemma~\ref{lemma:iso} (2), $\tPhi_Y$ is proper and each fiber 
contains at most one $G$~orbit.

Let $\xi \in \g_\Z^*$ correspond to the defining monomial $P \in \Hom(G,S^1)$.
Because $P \circ J$ is the identity map, $\left< \xi , j \right> = 1$.  
Moreover, since the homomorphism $J$ splits the short exact sequence 
\eqref{short exact G}, the map
$(i_T ,J) \colon T \times S^1 \to G$ is an isomorphism of groups.
Hence, the induced map $(i_T^*,  j) \colon \g^* \to \t^* \times \R$ 
is a linear isomorphism.
Therefore, by the first paragraph, $(\Phi_Y,\varphi)$ is proper
and each fiber contains at most one $T \times S^1$ orbit.

Finally, \eqref{splitimage} follows easily from properties (1) and (2) 
of Lemma~\ref{sigma}.
\end{proof}

\begin{Lemma} \labell{lemma:DHfunction}
Let $Y = T \times_H \C^{h+1} \times \h^0$
be a tall complexity one model with moment map 
$\Phi_Y \colon Y \to \t^*$.
Let $J \in \Hom(S^1,G)$ be a complementary circle 
to $T$ in $G := T \times_H (S^1)^{h+1}$;
let $j$ be the corresponding element of $\g_\Z$; and 
let $\varphi \colon Y \to \R$ be the moment map
for the resulting circle action, normalized by $\varphi([\lambda,0,0])=0$.
Given $\kappa \in \R$,  define
$$Y_{J,\kappa} = \varphi\inv\big( (-\infty,\kappa] \big) 
                \subset Y .$$

\begin{enumerate}

\item
If $\sigma \colon \image \Phi_Y \to \g^*$
is the map given in Lemma~\ref{sigma},
then the function 
\begin{equation} \labell{formula}
\beta \mapsto 
   \kappa - \left< \sigma(\beta) , j \right> 
\end{equation}
from $\Phi_Y(Y_{J,\kappa})$ to $\R$
is a \DH function for the restriction of $\Phi_Y$ to~$Y_{J,\kappa}$.

\item
If $\kappa > 0$, then $\Phi_Y(Y_{J,\kappa})$ contains a neighbourhood
of  $\alpha = \Phi_Y([\lambda,0,0])$
in $\Phi_Y(Y)$.

\item
The restriction of $\Phi_Y$ to $Y_{J,\kappa}$ is proper.
\end{enumerate}
\end{Lemma}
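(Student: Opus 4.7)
The plan is to handle the three parts in the order (3), (2), (1), since properness and continuity feed into the later computations.

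For (3), I would use that $(\Phi_Y, \varphi)$ is proper by Lemma \ref{lemma:preDH}. Given a compact $K \subset \image \Phi_Y$, continuity of $\sigma$ (from Lemma \ref{sigma}) ensures that $c := \min_{\beta \in K} \langle \sigma(\beta), j \rangle$ is finite. By the image description \eqref{splitimage}, every point of $\Phi_Y^{-1}(K) \cap Y_{J,\kappa}$ is carried under $(\Phi_Y,\varphi)$ into $K \times [c, \kappa]$, whose preimage is compact. For (2), property (3) of Lemma \ref{sigma} gives $\sigma(\alpha) = \tPhi_Y([\lambda,0,0])$, and the normalization $\varphi([\lambda,0,0]) = 0$ forces $\langle \sigma(\alpha), j \rangle = 0 < \kappa$. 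Continuity of $\sigma$ yields a neighbourhood $V$ of $\alpha$ in $\image \Phi_Y$ on which $\langle \sigma(\beta), j \rangle < \kappa$; for each such $\beta$, \eqref{splitimage} produces a point $y \in Y$ with $\Phi_Y(y) = \beta$ and $\varphi(y) = \langle \sigma(\beta), j \rangle < \kappa$, so $V \subset \Phi_Y(Y_{J,\kappa})$.

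For (1), the main step is to observe that the $T \times S^1$ action on $Y$, obtained through the group isomorphism $(i_T, J) \colon T \times S^1 \xrightarrow{\cong} G$ that splits \eqref{short exact G}, is an effective complexity-zero action with proper moment map $(\Phi_Y, \varphi)$ whose fibers contain at most one orbit (Lemma \ref{lemma:preDH}). Standard results for such toric actions --- Duistermaat-Heckman, or a direct local normal form computation --- then give that the push-forward of the Liouville measure $\omega^n/n!$ under $(\Phi_Y, \varphi)$ is Lebesgue measure on $\image(\Phi_Y, \varphi)$, normalized so that $\g^*/\g^*_\Z$ has unit volume. This coincides with the product of our chosen Lebesgue measures on $\t^*$ and $\R$ because $(i_T, J)$ carries $\ell \oplus \Z$ isomorphically onto $\g_\Z$. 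Combining this with Fubini and \eqref{splitimage}, for every measurable $A \subset \t^*$,
\[
 \int_{\Phi_Y^{-1}(A) \cap Y_{J,\kappa}} \frac{\omega^n}{n!} \; = \; \int_{A \cap \Phi_Y(Y_{J,\kappa})} \bigl(\kappa - \langle \sigma(\beta), j\rangle\bigr) \, d\beta ,
\]
which is exactly the desired identification of the \DH function on $\Phi_Y(Y_{J,\kappa})$, where the integrand is non-negative by \eqref{splitimage}.

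The main obstacle is articulating part (1) carefully: one must isolate the toric structure of the enlarged $T \times S^1$-action and invoke the \DH density equals one result for toric actions in the possibly non-compact, proper setting, along with the lattice normalization. Both reduce to standard facts once the toric structure from Lemma \ref{lemma:preDH} is in hand.
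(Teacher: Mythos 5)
Your proposal is correct and follows essentially the same route as the paper: derive the description of $(\Phi_Y,\varphi)(Y_{J,\kappa})$ from \eqref{splitimage}, use that the toric push-forward is Lebesgue measure to get the \DH function by Fubini, and use continuity of $\sigma$ for (2) and (3). Your treatment of (1) is slightly more explicit about the lattice normalization ($(i_T,J)$ carrying $\ell \oplus \Z$ onto $\g_\Z$), which the paper leaves implicit, but the substance is the same.
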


\begin{proof}
Since by Lemma~\ref{lemma:preDH} each level set of $(\Phi_Y, \varphi)$ contains at most
a  single $T \times S^1$ orbit,
the \DH measure for  
the restriction of $(\Phi_Y,\varphi)$ to 
$Y_{J,\kappa}$ is Lebesgue measure
on the set $(\Phi_Y,\varphi)(Y_{J,\kappa})$.
The \DH measure 
for the restriction of $\Phi_Y$ to $Y_{J,\kappa}$ is the
push-forward of this measure under the projection map
from $\t^* \times  \R$  to $\t^*$.
Finally, by  \eqref{splitimage}, 
\begin{multline} \labell{displayed equation}
( \Phi_Y, \varphi )(Y_{J,\kappa}) = \\
  \left\{ (\beta, s) \in \t^* \times \R   \mid 
   \beta \in \image \Phi_Y \text{ and }
   \kappa \geq s \geq \left< \sigma(\beta) , j \right> 
   \right\}.
\end{multline}
Claim~(1) follows immediately.

By Lemma~\ref{sigma}, the function $\sigma$ is continuous and
$\langle \sigma(\alpha), j \rangle 
 = \langle \tPhi_Y([\lambda,0,0], j \rangle = 
 \varphi([\lambda,0,0]) = 0$.
Therefore, if $\kappa > 0$,    then 
the equation~\eqref{displayed equation} implies that 
there is a neighbourhood $U$ of $\alpha$ in $\t^*$ 
such that $\Phi_Y(Y) \cap U = \Phi_Y(Y_{J,\kappa}) \cap U$. 
This gives Claim~(2).

Since $\sigma$ is continuous, 
the equation~\eqref{displayed equation} implies that
the intersection $(K \times \R) \cap (\Phi_Y,\varphi)(Y_{J,\kappa})$
is compact for any compact set $K$.
Moreover, by Lemma~\ref{lemma:preDH}, $(\Phi_Y,\varphi)$ is proper.
Claim~(3) follows immediately.
\end{proof}

\begin{Corollary} \labell{exists function}
For every tall complexity one model $Y$, there exist \DH functions
for truncations of the model.
Each such function is
well defined and continuous on a neighbourhood of $\alpha$ in $\image \Phi_Y$.
\end{Corollary}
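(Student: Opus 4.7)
My plan is to assemble the corollary directly from the preceding lemmas with no substantive new content; the hard work has already been done.

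For existence, given a tall complexity one model $Y = T \times_H \C^{h+1} \times \h^0$, I would first invoke Remark~\ref{rmk:J} to produce a complementary circle $J \colon S^1 \to G$ to $T$ in $G = T \times_H (S^1)^{h+1}$ (such a $J$ exists because the short exact sequence of lattices induced by \eqref{short exact G} splits). I would then pick any $\kappa > 0$. These two choices are exactly the data required by Definition~\ref{DH truncation}, and part~(1) of Lemma~\ref{lemma:DHfunction} provides an explicit \DH function for the restriction of $\Phi_Y$ to $Y_{J,\kappa}$, namely
\[
\rho(\beta) \;=\; \kappa - \langle \sigma(\beta), j\rangle,
\]
where $\sigma \colon \image \Phi_Y \to \g^*$ is the continuous map built in Lemma~\ref{sigma} and $j \in \g_\Z$ corresponds to $J$. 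This establishes that \DH functions for truncations of the model exist.

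For the continuity and neighbourhood assertion, I would argue as follows. By Lemma~\ref{lemma:DHfunction}(2), because $\kappa > 0$, the set $\Phi_Y(Y_{J,\kappa})$ contains a neighbourhood of $\alpha = \Phi_Y([\lambda,0,0])$ in $\image \Phi_Y$; hence the formula~\eqref{formula} is defined on such a neighbourhood. Continuity is immediate from the formula together with continuity of $\sigma$, which is part of the conclusion of Lemma~\ref{sigma}. Since \DH functions are only almost unique (see page~\pageref{page:unique}), the phrase ``well defined'' in the statement should be read as asserting the existence of a canonical continuous representative on a neighbourhood of $\alpha$, and \eqref{formula} provides exactly such a representative.

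There is no genuine obstacle: all the geometric content---identifying the defining monomial, constructing the continuous section $\sigma$ of $i_T^*$ along $\image \Phi_Y$, describing the image of $(\Phi_Y,\varphi_J)$, and verifying properness of the truncated moment map---has already been carried out in Lemmas~\ref{xi exist}, \ref{lemma:iso}, \ref{sigma}, \ref{lemma:preDH}, and \ref{lemma:DHfunction}. The only remaining point, which is the subject of the companion Corollary~\ref{difference is integral affine} rather than of this one, is to show that the choice of $J$ and $\kappa$ affects $\rho$ only by an integral affine function near $\alpha$.
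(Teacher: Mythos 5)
Your proposal is correct and follows the paper's own argument essentially verbatim: the paper's proof likewise invokes Remark~\ref{rmk:J} for the existence of a complementary circle and then applies Lemma~\ref{lemma:DHfunction} with any $\kappa > 0$. Your added remarks on the formula~\eqref{formula} and the continuity of $\sigma$ simply make explicit what the paper leaves implicit in the citation of Lemma~\ref{lemma:DHfunction}.
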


\begin{proof}
By Remark~\ref{rmk:J}, there exist complementary circles to $T$ in $G$.
The result then follows 
from Lemma~\ref{lemma:DHfunction} with any $\kappa > 0$. 
\end{proof}

\begin{Corollary} \labell{difference is integral affine}
Let $Y = T \times_H \C^{h+1} \times \h^0$ be a tall complexity one
model.  Let $\rho$ and $\rho'$ be \DH functions for truncations of $Y$.  
Then the difference $\rho - \rho'$ is equal to an integral affine function
on some neighbourhood of $\Phi_Y([\lambda,0,0])$ in $\image \Phi_Y$.
\end{Corollary}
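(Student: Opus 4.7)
The plan is to exploit the explicit formula from Lemma~\ref{lemma:DHfunction}(1) and reduce the difference to the pairing of $\beta$ with an element of the integral lattice $\ell \subset \t$. Concretely, suppose $\rho$ and $\rho'$ arise from truncation data $(J,\kappa)$ and $(J',\kappa')$, and let $j, j' \in \g_\Z$ be the elements corresponding to the complementary circles $J, J'$. By Lemma~\ref{lemma:DHfunction}(2), both $\Phi_Y(Y_{J,\kappa})$ and $\Phi_Y(Y_{J',\kappa'})$ contain a common neighbourhood $V$ of $\alpha = \Phi_Y([\lambda,0,0])$ in $\image \Phi_Y$ (by shrinking $\kappa,\kappa'$ to be positive if necessary, although the given ones are positive by definition of a truncation). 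On $V$, Lemma~\ref{lemma:DHfunction}(1) yields
\[
\rho(\beta) - \rho'(\beta) \;=\; (\kappa - \kappa') \;-\; \bigl\langle \sigma(\beta),\, j - j' \bigr\rangle,
\]
where $\sigma$ is the common map from Lemma~\ref{sigma} (which depends only on $Y$, not on the choice of complementary circle, once we normalize the $G$ moment map as in Lemma~\ref{sigma}).

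The key step is to show that $j - j'$ lies in $i_{T,*}(\ell) \subset \g_\Z$. Since $J$ and $J'$ are complementary circles, $P \circ J = P \circ J' = \id_{S^1}$, so $\langle \xi, j\rangle = \langle \xi, j'\rangle = 1$, hence $\langle \xi, j - j'\rangle = 0$. The short exact sequence \eqref{short exact G} of tori dualizes to a short exact sequence of integral lattices $\{0\} \to \ell \to \g_\Z \to \Z \to \{0\}$ with last map $\langle \xi, \cdot \rangle$, so $j - j' = i_{T,*}(v)$ for a unique $v \in \ell$.

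Now I use property (1) of Lemma~\ref{sigma} with $t = 0$ together with the fact that $i_T^*\xi = 0$ (since $P \circ i_T$ is trivial): taking the $T$-component of $\sigma(\beta)$ just recovers $\beta$. Therefore
\[
\bigl\langle \sigma(\beta), j - j'\bigr\rangle
 \;=\; \bigl\langle \sigma(\beta), i_{T,*}(v)\bigr\rangle
 \;=\; \bigl\langle i_T^*\sigma(\beta), v\bigr\rangle
 \;=\; \langle \beta, v\rangle,
\]
and thus
\[
\rho(\beta) - \rho'(\beta) \;=\; (\kappa - \kappa') \;-\; \langle \beta, v\rangle
\]
on $V$, which is integral affine in the sense of the paper since $v \in \ell$.

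The only mildly delicate point is the normalization of the auxiliary $G$-moment maps used implicitly in Lemma~\ref{sigma}: both truncations must be computed with the same $\sigma$, so one should first observe that the formula \eqref{formula} is invariant under shifting $\kappa$ together with the normalization of $\varphi_J$, so it is legitimate to use the single $\sigma$ of Lemma~\ref{sigma} (normalized by $\langle \tPhi_Y([\lambda,0,0]), j\rangle = 0$) for both $(J,\kappa)$ and $(J',\kappa')$ after absorbing the normalization into the additive constant. Once this bookkeeping is in place, the calculation above is the whole argument; I do not expect a real obstacle beyond it.
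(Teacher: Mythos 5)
Your proof is correct and takes essentially the same route as the paper's: both reduce, via Lemma~\ref{lemma:DHfunction}(1), to showing that $j-j'$ lies in the image of $i_T$ and then apply $i_T^*\sigma(\beta)=\beta$ from Lemma~\ref{sigma}. The only difference is bookkeeping --- the paper tracks both $\sigma$ and $\sigma'$ and the constant $c$ with $\sigma-\sigma'=c\xi$, while you absorb that constant into $\kappa'$ so as to work with a single $\sigma$; note that your parenthetical claim that $\sigma$ is `common' to both complementary circles is not literally correct (the Lemma~\ref{sigma} normalization $\langle\tPhi_Y([\lambda,0,0]),j\rangle=0$ depends on $j$), but the renormalization argument in your final paragraph is precisely the honest fix, so the proof as a whole is sound.
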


\begin{proof}
Let $J$ and $J'$ be complementary circles to $T$ 
in $G:= T \times_H (S^1)^{h+1}$,
and let $j$ and $j'$ be the corresponding elements of $\g_\Z$.
Let $\varphi$ and $\varphi'$ be the associated moment maps,
normalized by $\varphi([\lambda,0,0]) = \varphi'([\lambda,0,0]) = 0$.
Let $P \colon G \to S^1$ be the defining monomial.
Because $P \circ J = P \circ J' = \text{Id}_{S^1}$, 
there exists $\Delta j \in \t_\Z$
such that 
\begin{equation*} 
j - j' = i_T(\Delta j).
\end{equation*}

Let $\sigma$ and $\sigma'$ be the maps from $\image \Phi_Y$ to $\g^*$
that are associated to $J$ and $J'$, respectively,
in Lemma~\ref{sigma}.
Let $\tPhi_Y$ and $\tPhi_Y'$ be the $G$ moment maps, normalized
as in Lemma~\ref{sigma}.  Because $i_T^* \circ \tPhi_Y = i_T^* \circ \tPhi_Y'$,
there exists a real number $c$ such that
$\tPhi_Y - \tPhi_Y' = c \xi$,
where $\xi \in \g_\Z^*$ corresponds to $P \in \Hom(P,S^1)$.

Therefore,  parts (1) and (2) of Lemma~\ref{sigma} 
imply that
$$\sigma - \sigma' = c \xi.$$

Let $\kappa$ and $\kappa'$ be positive numbers.
Let $\rho_{J,\kappa}$ and $\rho_{J',\kappa'}$ 
be the \DH functions for the restrictions of $\Phi_Y$
to $\varphi\inv((-\infty,\kappa])$ 
and ${\varphi'}\inv((-\infty,\kappa'])$, respectively.
By Lemma~\ref{lemma:DHfunction}, there exists 
neighbourhood of $\Phi_Y([\lambda,0,0])$ in $\image \Phi_Y$
where
\begin{multline*}
 \rho_{J,\kappa}(\beta) - \rho_{J',\kappa'}(\beta) \\
  = \kappa 
    -  \left< \sigma(\beta) , j \right>
          -\kappa'    + \left< \sigma'(\beta) , j' \right>  \\
  = \kappa 
    -  \left< \sigma(\beta) , j \right>
          -\kappa' +  \left< \sigma(\beta) - c \xi , j - i_T(\Delta j) \right>  \\
  = \kappa - \kappa' - c
    - \left< \iota_T^*(\sigma(\beta)) , \Delta j \right>  \\
  = \kappa - \kappa' - c
     - \left< \beta , \Delta j \right>. \\
\end{multline*}
Here, the penultimate equality uses the fact that $\left< \xi , j \right> = 1$,
and the last equality follows from Lemma~\ref{sigma}.
\end{proof}

{
\section{Compatibility of the Duistermaat-Heckman function, and local existence}
\labell{sec:DH compatible}

This section achieves two goals.  
In Proposition~\ref{DH is compatible} we prove
that a \DH function of a complexity one space
is compatible with its skeleton. 
In Proposition~\ref{local existence}
we prove ``local existence": 
for any compatible values of our invariants,
over sufficiently small open subsets of $\t^*$ there exists a 
tall complexity one space whose invariants take these values.
The proofs of both propositions
rely on a surgery that removes or adds exceptional orbits. 
In order to perform this surgery 
we identify a punctured neighbourhood of an exceptional orbit
with a punctured neighbourhood of a non-exceptional orbit.
This identification is done in Lemma~\ref{identify}.

We begin by setting up the relevant notation.
Let $C$ be a Delzant cone in $\t^*$, and let $(M_C,\omega_C,\Phi_C)$ 
be a symplectic toric manifold whose moment image is $C$.
We recall how to obtain such a manifold.
By Definition~\ref{def:delzant} 
there exist an integer $0 \leq k \leq n$
and a linear isomorphism $A \colon \R^n \to \ft^*$
that sends $\Z^n$ onto the weight lattice $\ell^*$ such that
$C = \alpha + A(\R_+^k \times \R^{n-k})$.
We may take $M_C$ to be the manifold
$$ M_C := \C^k \times (T^*S^1)^{n-k},$$
with the standard symplectic structure;
with the $T$-action given by the isomorphism
$T \to (S^1)^k \times (S^1)^{n-k}$ induced by
$A^* \colon \t \to \R^n$; 
and with the moment map
$\Phi_C (z,a,\eta) = \alpha +
 A \left( \pi |z_1|^2, \ldots, \pi |z_k|^2, \eta_1,\ldots,\eta_{n-k} 
   \right)$,
where $z = (z_1,\ldots,z_k) \in \C^k$
and $(a,\eta) \in (S^1)^{n-k} \times \R^{n-k} \cong (T^*S^1)^{n-k}$.

We also consider the manifold $M_C \times \C$
with the product symplectic structure.
This manifold admits a 
$T$ action  on the first factor with moment
map $(m,z) \mapsto \Phi_C(m)$ for all $m \in M_C$ and $z \in \C$.
It also admits a toric action of $T \times S^1$
with moment map $(m,z) \mapsto (\Phi_C(m) , \kappa+ \pi |z|^2)$,
for any $\kappa \in \R$. 

Given $\epsilon > 0$, let $D_\epsilon$ be the disk 
$$ D_\epsilon = \big\{ z \in \C \ \big| \ \pi |z|^2 <  \epsilon \big\}.$$}

\begin{Lemma} \labell{identify}
Let $Y = T \times_H \C^{h+1} \times \h^0$
be a tall complexity one model
with moment map $\Phi_Y \colon Y \to \t^*$.
Let $J$ be a complementary circle to $T$ 
in $G := T \times_H (S^1)^{h+1}$;
and let $\varphi \colon Y \to \R$ be the moment map 
for the resulting circle action, normalized by $\varphi([\lambda,0,0])=0$.
Let $\alpha = \Phi_Y([\lambda,0,0])$.
\begin{enumerate}
\item
Let $V$ be a neighbourhood of the orbit $\{[\lambda,0,0]\}$ in $Y$.
Then there exists a neighbourhood $U$ of $\alpha$ in $\t^*$
and a positive number $\kappa'$ such that the preimage
$\left( \Phi_Y , \varphi \right)\inv  
 \big( U \times (-\infty,\kappa') \big)$
is contained in $V$.

\item
Let $(M_C,\omega_C,\Phi_C)$ be a symplectic toric manifold
whose moment image is\footnote
   {The moment cone is a Delzant cone by Lemma~\ref{properties of Y}.} 
$C := \image \Phi_Y$.
For every positive number $\kappa$,
there exist $\eps > 0$,
a neighbourhood $U$ of $\alpha \in \t^*$,
and a $T$ equivariant symplectomorphism between
\begin{gather*}
\Phi_Y^{-1}(U) \cap \varphi^{-1}((\kappa,\kappa+\eps)) \subset Y 
\quad \mbox{and} \\
\Phi_C\inv(U) \times \left( D_\epsilon \smallsetminus \{0\} \right) \subset
M_C \times  \C
\end{gather*}
that intertwines  $\varphi \colon Y \to \R$ and the map
$(m,z) \mapsto \kappa + \pi |z|^2$.
Here, $T$ acts only on the first factor of  $M_C \times \C$.
\end{enumerate}
\end{Lemma}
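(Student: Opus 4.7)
The plan is to combine the properness of $(\Phi_Y,\varphi)$ from Lemma~\ref{lemma:preDH} with the image description~\eqref{splitimage}. Since $(\Phi_Y,\varphi) \colon Y \to \t^* \times \R$ is proper and the target is locally compact Hausdorff, this map is closed. Each fibre contains at most one $T \times S^1$-orbit by Lemma~\ref{lemma:preDH}, so the fibre over $(\alpha,0)$ consists of the single orbit $\{[\lambda,0,0]\}$, which lies in $V$. Hence the closed set $(\Phi_Y,\varphi)(Y \ssminus V)$ omits $(\alpha,0)$, and one can choose a product neighbourhood $U_0 \times (-\delta,\delta)$ of $(\alpha,0)$ disjoint from it. Using~\eqref{splitimage} together with the continuity of $\sigma$ and the fact that $\langle \sigma(\alpha), j \rangle = 0$, shrink $U \subseteq U_0$ so that $\langle \sigma(\beta), j \rangle > -\delta$ for every $\beta \in U$; then for any $\beta \in U$ and $s \leq -\delta$ the point $(\beta,s)$ lies outside $\image(\Phi_Y,\varphi)$, so its preimage is empty. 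Setting $\kappa' := \delta$ then gives the claim.

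\textbf{Part (2).} The plan is to view both spaces as open symplectic toric manifolds for the larger torus $G = T \times_H (S^1)^{h+1}$ and to invoke uniqueness of non-compact symplectic toric manifolds with a given moment image. Since $\dim G = n = \half \dim Y$ and $G$ acts effectively on $Y$, the space $Y$ is a symplectic toric $G$-manifold; by Lemma~\ref{lemma:iso} its $G$-moment image, viewed in $\g^* \cong \h^0 \times \R^{h+1}$, is a translated copy of the Delzant cone $\h^0 \times \R_+^{h+1}$. Under the splitting $(i_T,J) \colon T \times S^1 \to G$ from Remark~\ref{rmk:J}, the $G$-moment map corresponds to $(\Phi_Y,\varphi)$, whose image is described by~\eqref{splitimage}. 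Since $\langle \sigma(\alpha),j\rangle = 0$, continuity of $\sigma$ implies that for $\kappa > 0$ and sufficiently small $\eps > 0$ and $U \ni \alpha$ the inequality $s \geq \langle \sigma(\beta),j\rangle$ is automatic on $(U \cap C) \times (\kappa,\kappa+\eps)$; hence the open $G$-submanifold $W_Y := \Phi_Y\inv(U) \cap \varphi\inv\bigl((\kappa,\kappa+\eps)\bigr)$ is a symplectic toric $G$-manifold with $G$-moment image exactly $(U \cap C) \times (\kappa,\kappa+\eps)$.

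On the other hand, $M_C \times \C$ carries the $G$-action given by $T$ on the first factor and the standard rotation on the second, and is a symplectic toric $G$-manifold with $G$-moment map $(m,z) \mapsto (\Phi_C(m),\kappa+\pi|z|^2)$; the open submanifold $W_C := \Phi_C\inv(U) \times (D_\eps \ssminus \{0\})$ has $G$-moment image $(U \cap C) \times (\kappa,\kappa+\eps)$, matching that of $W_Y$. Moreover, at every orbit in these regions the local Delzant cone structure in $\g^*$ agrees on the two sides, as one verifies from Lemma~\ref{lemma:iso} and the explicit description of $M_C$: both $W_Y$ and $W_C$ are locally modeled on open subsets of the common Delzant cone $C \times \R_+$ in $\g^* \cong \t^* \times \R$, with identical lattices. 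By the uniqueness of non-compact symplectic toric manifolds with a prescribed Delzant moment image~\cite{KL}, there is a $G$-equivariant symplectomorphism $W_Y \to W_C$ intertwining the two $G$-moment maps. Under the splitting $G \cong T \times S^1$, this is a $T$-equivariant symplectomorphism intertwining $\varphi$ with the map $(m,z) \mapsto \kappa + \pi|z|^2$, as required.

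\textbf{Main obstacle.} The principal difficulty in Part~(2) is verifying that the local Delzant cone structures on $W_Y$ and $W_C$ genuinely coincide so that~\cite{KL} applies; this reduces to a direct comparison of the weights of the $G$-action at each stabiliser using Lemma~\ref{lemma:iso}. An alternative, more hands-on route that avoids invoking~\cite{KL} is first to perform the symplectic $S^1$-reduction at level $\kappa$: for $\kappa > 0$ the $S^1$-action is free on $\varphi\inv(\kappa)$, and the reduction $\varphi\inv(\kappa)/S^1 \cap \Phi_Y\inv(U)$ is a $T$-toric manifold with moment image $U \cap C$, hence is $T$-equivariantly symplectomorphic to $\Phi_C\inv(U)$ by Delzant; then on the free locus $\varphi > \kappa > 0$ one trivializes the resulting principal $S^1$-bundle via action-angle coordinates to obtain the product structure $\Phi_C\inv(U) \times (D_\eps \ssminus \{0\})$.
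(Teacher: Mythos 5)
Your Part (1) is essentially the paper's argument: both use properness of $(\Phi_Y,\varphi)$ to find a product neighbourhood of $(\alpha,0)$ whose preimage sits in $V$, and then use the image description~\eqref{splitimage} together with continuity of $\sigma$ and $\langle\sigma(\alpha),j\rangle=0$ to see that the half-line $s<-\kappa'$ has empty preimage.

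Your primary route for Part (2) is genuinely different from the paper's. The paper considers $Y$ and $M_C\times S^1\times\R$ as toric $T\times S^1$-manifolds, notes that their moment images coincide with $C\times\R$ near $(\alpha,\kappa)$, and then applies the \emph{local} normal form for toric actions near the single orbit over $(\alpha,\kappa)$, using properness to saturate; finally it composes with the action-angle map $re^{i\theta}\mapsto(e^{i\theta},\kappa+\pi r^2)$ to replace the $S^1\times\R$ factor by $D_\eps\ssminus\{0\}$. You instead appeal to the \emph{global} uniqueness of non-compact symplectic toric manifolds from~\cite{KL}. This works because the two open toric $G$-manifolds $W_Y$ and $W_C$ have literally the same moment image $(U\cap C)\times(\kappa,\kappa+\eps)$ (a subset of $\t^*\times\R$), the restricted moment maps remain proper with one orbit per fibre, and that image is contractible so the degree-two cohomology invariant of~\cite{KL} vanishes. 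The paper's route is more self-contained and purely local; your route trades a local argument for a heavier classification theorem, which shortens the write-up. Your ``alternative, more hands-on'' route via reduction at level $\kappa$ plus action-angle coordinates is essentially the paper's argument repackaged. Two small slips: the common Delzant cone you name should be $C\times\R$ rather than $C\times\R_+$ (near $\kappa>0$ the interval $(\kappa,\kappa+\eps)$ does not meet the boundary, so $\R_+$ plays no role); and in the reduction route the reduced space $\Phi_Y\inv(U)\cap\varphi\inv(\kappa)/S^1$ need not be compact, so ``by Delzant'' should be replaced by the same local-normal-form or~\cite{KL} argument. Neither slip affects the substance.
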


\begin{proof}
Let $j$ be the element of $\Z_G \subset \g$ that corresponds
to $J \in \Hom(S^1,G)$.
Let $\sigma \colon \image \Phi_Y \to \g^*$ be given as in Lemma~\ref{sigma}.
Then by Lemma~\ref{sigma} and Lemma~\ref{lemma:preDH}:
\begin{itemize}
\item [(i)] the map $\sigma$ is continuous  and  
$\langle \sigma(\alpha), j \rangle = 0$;
\item [(ii)] $(\Phi_Y,\varphi) \colon Y \to \ft^* \times \R$ is proper and
each fiber contains at most one orbit; and
\item[(iii)]
$\image \,  ( \Phi_Y, \varphi ) =
  \left\{ (\beta, s) \in \t^* \times \R   \mid 
   \beta \in \Phi_Y(Y) \ \& \ 
   s \geq \left< \sigma(\beta) , j \right> 
   \right\}.$
\end{itemize}

Let $V$ be a neighbourhood of the orbit $\{[\lambda,0,0]\}$ in $Y$.  
By (ii) above, there exist a neighbourhood $U$ of $\alpha$ in $\t^*$ and 
a positive number $\kappa'$
such that the pre-image $(\Phi_Y,\varphi)\inv (U \times (-\kappa',\kappa') )$
is contained in $V$.
Therefore,  by  (i) and (iii) above,
after possibly shrinking $U$, 
the pre-image $(\Phi_Y,\varphi)\inv(U \times (-\infty,-\kappa'])$ is empty;
this proves Claim (1).

Moreover, if $\kappa$ is a positive number, then by  (i) and (iii)
above, there exists 
a neighbourhood $W$ of $(\alpha,\kappa) \in \t^* \times \R$
such that $\image (\Phi_Y,\varphi) \cap W = ( C \times \R)  \cap W$.
Consider $Y$ and $M_C \times S^1 \times \R$ as symplectic manifolds
with toric $T \times S^1$ actions and with moment maps
$(\Phi_Y,\varphi)$ and $(\Phi_C,\kappa+\proj_\R)$, respectively.
By the argument above, their moment images coincide with $C \times \R$,
hence with each other, on a neighbourhood of $(\alpha,\kappa)$.
Because (ii) holds,
the local normal form for toric actions (see~\cite{delzant}) implies that,
after possibly shrinking this neighbourhood,
its preimages in $Y$ and in $M_C \times S^1 \times \R$ are isomorphic.
Thus, after possibly shrinking $U$, for sufficiently small $\eps$
the subset $\Phi_Y\inv(U) \cap \varphi\inv((\kappa-\eps,\kappa+\eps))$
of $Y$ is isomorphic to the subset 
$\Phi_C\inv(U) \times S^1 \times (\kappa-\eps,\kappa+\eps)$
of $M_C \times S^1 \times \R$.
Restricting to the preimage of $U \times (\kappa,\kappa+\eps)$,
and composing further with the map
$re^{i\theta} \mapsto (e^{i\theta} , \kappa + \pi r^2)$,
which is an $S^1$-equivariant symplectomorphism 
from $D_\eps \ssminus \{ 0 \}$ onto $S^1 \times (\kappa,\kappa+\eps)$
that carries the map $z \mapsto \kappa + \pi |z|^2$
to the map $(\lambda,s) \mapsto s$, we get Claim~(2).
\end{proof}

We are now ready to prove the first main result of this section.

\begin{Proposition} \labell{DH is compatible}
Let $(M,\omega,\Phi,\calT)$ be a tall complexity one space.
The skeleton and \DH function of $M$ are compatible.
\end{Proposition}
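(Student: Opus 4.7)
The plan is to verify compatibility at each point $\alpha \in \Delta$ by performing, for every exceptional orbit over $\alpha$, a symplectic surgery that replaces the exceptional orbit by a non-exceptional piece modelled on a symplectic toric manifold. By Corollary~\ref{discrete}, $\pi\inv(\alpha) = \{x_1,\dots,x_k\}$ is finite. For each $i$, let $Y_i = T \times_{H_i} \C^{h_i+1} \times \h_i^0$ be the corresponding tall complexity one model (Corollary~\ref{tall models}), and apply the local normal form theorem to get an equivariant symplectomorphism $\Psi_i$ from an invariant neighbourhood of $x_i$ in $M$ onto an invariant open subset of $Y_i$ containing $\{[\lambda,0,0]\}$. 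Choose a complementary circle $J_i$ to $T$ in $G_i$ (Remark~\ref{rmk:J}) with normalized moment map $\varphi_i$, a positive number $\kappa_i$, and a symplectic toric manifold $M_{C_i}$ with moment image $C_i := \image \Phi_{Y_i}$ (a Delzant cone by Lemma~\ref{properties of Y}). Set $\rho_i$ to be the \DH function for the restriction of $\Phi_{Y_i}$ to $(Y_i)_{J_i,\kappa_i}$; by Definition~\ref{DH truncation} this is a \DH function for a truncation of $Y_i$.

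Next, using Lemma~\ref{identify}(1), I can shrink a neighbourhood $U$ of $\alpha$ in $\calT$ so that $V_i^{\text{out}} := \Psi_i\inv(\Phi_{Y_i}\inv(U) \cap \varphi_i\inv((-\infty,\kappa_i]))$ lies in the domain of $\Psi_i$; and Lemma~\ref{identify}(2) provides $\eps_i > 0$ and a $T$-equivariant symplectomorphism $\Lambda_i$ between the annulus $V_i^{\text{ann}} := \Psi_i\inv(\Phi_{Y_i}\inv(U) \cap \varphi_i\inv((\kappa_i,\kappa_i+\eps_i)))$ and $\Phi_{C_i}\inv(U) \times (D_{\eps_i} \ssminus \{0\})$. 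After further shrinking $U$, I may arrange using Lemma~\ref{compatible} and the properness of $\pi$ that $\Delta \cap U = C_i \cap U$ for each $i$, that the $V_i^{\text{out}}$'s are pairwise disjoint, and that every exceptional orbit of $M$ over $U$ lies in some $V_i^{\text{out}}$. I then form
$$ M' \;=\; \Bigl(\Phi\inv(U) \ssminus \bigcup_{i=1}^k V_i^{\text{out}}\Bigr) \,\cup\, \bigcup_{i=1}^k \Phi_{C_i}\inv(U) \times D_{\eps_i}, $$
gluing each annulus $V_i^{\text{ann}}$ to $\Phi_{C_i}\inv(U) \times (D_{\eps_i} \ssminus \{0\})$ via $\Lambda_i$. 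Since $\Lambda_i$ is $T$-equivariantly symplectic and intertwines moment maps, $M'$ is a Hamiltonian $T$-manifold with moment map $\Phi' \colon M' \to U$. A fibre of $\Phi'$ over $\beta$ differs from $\Phi\inv(\beta)$ only in replacing the compact piece $V_i^{\text{out}} \cap \Phi\inv(\beta)$ by the single compact $T$-orbit $\Phi_{C_i}\inv(\beta) \times \{0\}$, so $\Phi'$ is proper and each reduced space of $M'$ remains a two-dimensional manifold; hence $M'$ is a tall complexity one space over $U$. By construction $M'$ has no exceptional orbits over $U$, so its \DH function $\rho_{M'}$ is integral affine there by the \DH theorem noted in the introduction.

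To conclude I compare Liouville measures using inclusion--exclusion. The overlap between the glued pieces of $M'$ is precisely $V_i^{\text{ann}} \cong \Phi_{C_i}\inv(U) \times (D_{\eps_i} \ssminus \{0\})$, on which Liouville measures agree (since $\Lambda_i$ is a symplectomorphism), while the only new points in $M'$ are those in $\Phi_{C_i}\inv(U) \times \{0\}$, a measure-zero slice. Therefore
$$ \mu^{DH}_{M'}(A) \;=\; \mu^{DH}_M(A) \,-\, \sum_{i=1}^k \mu^{DH}_{V_i^{\text{out}}}(A) $$
for every measurable $A \subset U$, which says $\rho_M - \sum_{i=1}^k \rho_i = \rho_{M'}$ on $U \cap \Delta$. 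Since $\rho_{M'}$ is integral affine near $\alpha$, so is $\rho_M - \sum_i \rho_i$, giving compatibility of $\rho_M$ with $(M_\exc,\Phibar)$ at $\alpha$; as $\alpha$ was arbitrary, this proves the proposition. The main obstacle is the careful verification that the glued $M'$ is a genuine tall complexity one space (Hausdorffness, connectedness, properness of $\Phi'$, and two-dimensionality of every reduced space), but each of these reduces to the local structure supplied by the local normal form theorem and Lemma~\ref{identify}.
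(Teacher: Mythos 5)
Your proposal is correct and follows essentially the same strategy as the paper: excise the finitely many exceptional orbits over $\alpha$ via Lemma~\ref{identify}, fill the holes with toric pieces $\Phi_{C_i}\inv(U)\times D_{\eps_i}$, observe that the resulting space has no exceptional orbits so its \DH function is integral affine, and then read off $\rho_M - \sum_i \rho_i = \rho_{M'}$ from the fact that $M'$ differs from $\Phi\inv(U)\ssminus\bigcup_i V_i^{\text{out}}$ only by a measure-zero set. The only small difference in bookkeeping is that you shrink $U$ \emph{before} the surgery so that every exceptional orbit over $U$ already lies in some $V_i^{\text{out}}$; the paper instead does the surgery first and then shrinks $U$ afterwards, using closedness of $M'_\exc$ and properness of $\Phibar'|_{M'_\exc}$ to push any stray exceptional orbits out of the picture. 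Both work; the paper's order is slightly cleaner because it avoids the compactness argument needed to justify the a priori containment of the exceptional orbits in fixed neighbourhoods of the $x_i$. You also leave the Hausdorffness of $M'$ to a remark; the paper verifies it directly by exhibiting $M'$ as a union of two closed Hausdorff pieces, and it would be worth spelling that out.
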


\begin{proof}
Fix a point $\alpha$ in the moment image $\Delta = \Phi(M)$.
By Lemma~\ref{exceptional is closed} and Corollary~\ref{discrete},
there are only finitely many exceptional orbits $x$ in $\Phi\inv(\alpha)$.
For each such $x$, 
let $Y_x = T \times_{H_x} \times \C^{h_x + 1} \times \fh_x^0$ be
the tall complexity one model with moment map 
$\Phi_x \colon Y_x \to \ft^*$ corresponding to $x$.

By applying surgery to neighbourhoods of the exceptional orbits, 
we will construct a complexity one space $(M',\omega',\Phi',U)$  with
moment image $\Delta \cap U$ that has no exceptional orbits,
where $U \subset \calT$ is a convex open neighbourhood of $\alpha$.
Additionally, the \DH function $\rho' \colon \Delta \cap U \to \R_{>0}$ of
$M'$ will satisfy
\begin{equation}\labell{DHcompeq}
\rho' = \rho - \sum_x \rho_x,
\end{equation}
where 
$\rho_x$ is the \DH function for a truncation of the model  $Y_x$  
for each  exceptional orbit $x$ in $\Phi\inv(\alpha)$, 
and the sum is over all such orbits.
Because $M'$ is a complexity one space with no exceptional orbits,
$\rho'$ is integral affine on $\Delta \cap U$. 
The proposition will follow immediately.

By Lemma~\ref{compatible} 
there exists a Delzant cone $C$ at $\alpha$
and  a convex open neighbourhood $U$ of $\alpha$ in $\t^*$
such that $\Delta \cap U = C \cap U$.
Let $(M_{C},\omega_C,\Phi_{C})$ be a toric manifold with moment image $C$.

By the local normal form theorem, for each exceptional orbit $x$
in $\Phi\inv(\alpha)$ there exists an isomorphism
$\Psi_x$ from an invariant neighbourhood $V_x$ 
of the orbit $\{[\lambda,0,0]\}$  in $Y_x$
to an invariant open subset of $M$ 
that carries  $\{[\lambda,0,0]\}$ to $x$. 
Moreover, we may assume that
the closures in $M$ of the open subsets $\Psi_x(V_x)$ are disjoint.

Given an exceptional orbit $x$ in $\Phi\inv(\alpha)$,
let $J_x$  be a complementary circle to $T$ in $T \times_{H_x} (S^1)^{h_x+1}$;
see Remark~\ref{rmk:J}.
Let $\varphi_x$ be a moment map for the resulting circle action,
normalized by $\varphi_x([\lambda,0,0])=0$.
By  Lemma~\ref{compatible}, $\image \Phi_x = C$.
By Lemma~\ref{identify}, for sufficiently small $\epsilon > 0$,
after possibly shrinking $U$,  there exists $\kappa_x > 0$
such that $\Phi_{x}\inv(U) \cap \varphi_x\inv((-\infty,\kappa_x+\eps))$
is contained in $V_x$;
moreover,
there exists an isomorphism 
between
\begin{gather}
\labell{iso1}
\Phi\inv(U) \cap \Psi_x\big(\varphi_x\inv((\kappa_x,\kappa_x+\epsilon))\big)
\subset M \quad \mbox{and} \\
\labell{iso2}
  \Phi_{C}\inv(U) \times \big( D_\epsilon \smallsetminus \{0\} \big)
 \subset M_C \times \C
\end{gather}
that intertwines
$\varphi_x \circ \Psi_x\inv \colon \Psi_x(V_x) \to \R$ and the map
$(m,z) \mapsto \kappa_x + \pi |z|^2.$

We construct $M'$ by gluing together the spaces
\begin{gather}\labell{space1}
\Phi\inv(U)  \smallsetminus 
\bigsqcup_x  \Psi_x\big(\varphi_x\inv(-\infty, \kappa_x]\big)\  \subset M \quad
\mbox{and} \\ \labell{space2}
\bigsqcup_{x} \Phi_{C}\inv(U) \times D_\eps \ \subset M_C \times \C
\end{gather}  by identifying
their isomorphic open subsets \eqref{iso1} and \eqref{iso2} for every exceptional orbit $x$ in $\Phi\inv(\alpha)$.

The space $M'$ is
the union two {\em closed} subspaces: the images in $M'$ of
\begin{gather}\labell{closed1}
\Phi\inv(U)  \smallsetminus 
\bigsqcup_x \Psi_x\big(\varphi_x\inv(-\infty, \kappa_x + \tfrac{1}{2} \epsilon) \big)
\ \subset M \quad \mbox{and} \\
\labell{closed2}
\bigsqcup_x \Phi_{C}\inv(U) \times \ol{D}_{\frac{1}{2}\eps}\  \subset M_C \times \C,
\end{gather}
where $\ol{D}_{\frac12\eps} \subset \C$ is the closed disk.
Because each of these is Hausdorff,
$M'$ is Hausdorff.

By construction, $M'$ is a manifold with a $T$ action, 
a symplectic form $\omega'$,
and a moment map $\Phi'$.
Moreover,  as maps to $U$,
the restriction of $\Phi$ to \eqref{closed1}
and the restriction of $\Phi_C$ to \eqref{closed2}
are both proper, and so $\Phi'$ is proper as well.

This yields a complexity one space $(M',\omega',\Phi',U)$ with moment image $\Delta \cap U$ that
has no exceptional orbits in ${\Phi'}\inv(\alpha)$. 
By Lemma~\ref{exceptional is closed}, the restriction of $\Phi'$ to the set of exceptional orbits is proper.
Therefore, after possibly shrinking $U$
further, we may assume that $M'$ has no exceptional orbits.

By Lemma~\ref{lemma:DHfunction}, after possibly shrinking $U$,
there is a well-defined \DH function 
$\rho_x \colon \Delta \cap U \to \R_{>0}$
for the restriction of $\Phi_x$ to $\varphi_x^{-1}((-\infty,\kappa_x])$,
and thus for the restriction of $\Phi$ 
to $\Psi_x(\varphi_x^{-1}((-\infty,\kappa_x])$.
Since $M'$ and \eqref{space1} differ by a set of measure zero,
and since $\rho_x$ is a \DH function for a truncation of the model $Y_x$,
the \DH function $\rho'$ for $(M',\omega',\Phi')$ satisfies \eqref{DHcompeq},
as required.

\end{proof}

We proceed to the second main result of this section:

\begin{Proposition}[Local existence] \labell{local existence}
Let $(S,\pi)$ be a tall skeleton over an open subset
$\calT \subset \t^*$,
let $\Delta \subset \calT$ be a convex Delzant subset that is compatible 
with $(S,\pi)$,
and let $g$ be a non-negative integer.
\begin{enumerate}
\item
For any $\alpha \in \calT$
there exists a convex open neighbourhood $U \subset \calT$ of $\alpha$
and a tall complexity one space of genus $g$ over $U$
with moment image $\Delta \cap U$  whose skeleton
is isomorphic to $S|_U$.
\item
Let $\rho \colon \Delta \to \R_{>0}$ be a function that is
compatible with $(S,\pi)$.
Then for any $\alpha \in \calT$
there exists a convex open neighbourhood $U \subset \calT$ of $\alpha$
and a tall complexity one space of genus $g$ over $U$
with moment image  $\Delta \cap U$,
whose skeleton is isomorphic to $S|_U$
and whose \DH function is $\rho|_{\Delta \cap U}$.
\end{enumerate}
\end{Proposition}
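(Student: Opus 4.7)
The strategy is to run the surgery of Proposition~\ref{DH is compatible} in reverse: assemble $M$ by starting from a ``base'' tall complexity one space $M_0$ over a neighborhood $U$ of $\alpha$ with no exceptional orbits, then use the isomorphism of Lemma~\ref{identify}(2) to excise small central balls from $M_0$ and glue in truncations of the complexity one models $Y_i$ associated to the $s_i \in \pi^{-1}(\alpha)$. Part (2) is then a parameter-matching argument.

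\emph{Setup.} By Corollary~\ref{discrete}, $\pi^{-1}(\alpha)=\{s_1,\ldots,s_k\}$ is finite; let $Y_i = T \times_{H_i} \C^{h_i+1} \times \h_i^0$ be the corresponding complexity one model (tall by Corollary~\ref{tall models}). By compatibility of $\Delta$ with $(S,\pi)$ and Lemma~\ref{properties of Y}, there exist a Delzant cone $C$ at $\alpha$ and a convex open neighborhood $U \subset \calT$ of $\alpha$ with $\Delta \cap U = C \cap U = \image\Phi_{Y_i} \cap U$ for every $i$. After shrinking $U$ and invoking the local-model clause of Definition~\ref{def:skeleton}, I may also write $S|_U = \bigsqcup_{i=1}^k S_i$ with each $S_i$ isomorphic to a neighborhood of $[\lambda,0,0]$ in $(Y_i)_\exc$. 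Fix a symplectic toric manifold $(M_C,\omega_C,\Phi_C)$ with moment image $C$ and a principal $T$-bundle $P \to \Sigma_g$ over a closed oriented surface of genus $g$. Following Example~\ref{associated}, set $M_0 := P \times_T \Phi_C^{-1}(U)$; this is a tall complexity one space over $U$ of genus $g$, with moment image $\Delta \cap U$ and no exceptional orbits. Pick distinct $p_1,\ldots,p_k \in \Sigma_g$; local triviality of $P$ identifies a neighborhood of the fiber over $p_i$ in $M_0$ with $D_\eps \times \Phi_C^{-1}(U)$, with $T$ acting on the second factor.

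\emph{Surgery.} For each $i$, pick a complementary circle $J_i$ to $T$ in $T \times_{H_i}(S^1)^{h_i+1}$ (Remark~\ref{rmk:J}) with moment map $\varphi_i$ normalized by $\varphi_i([\lambda,0,0])=0$, and fix positive numbers $\kappa_i$ and $0 < r_0 < r_1$. By Lemma~\ref{identify}(2), after shrinking $U$, there is a $T$-equivariant symplectomorphism between the collar $\Phi_{Y_i}^{-1}(U) \cap \varphi_i^{-1}((\kappa_i+r_0, \kappa_i+r_1))$ in $Y_i$ and the annular region $\Phi_C^{-1}(U) \times \{r_0 < \pi|z|^2 < r_1\}$ in the trivialization at $p_i$. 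By Lemma~\ref{identify}(1), after further shrinking, the truncation
\[
 T_i := \Phi_{Y_i}^{-1}(U) \cap \varphi_i^{-1}\bigl((-\infty,\, \kappa_i + r_1]\bigr)
\]
sits inside a neighborhood of $[\lambda,0,0] \in Y_i$ whose exceptional orbit set is exactly $S_i$. Build $M$ by removing the closed ball $\{\pi|z|^2 \leq r_0\} \times \Phi_C^{-1}(U)$ from each trivialization in $M_0$ and attaching $T_i$ along the identified collar. The ``two closed sets'' argument from the proof of Proposition~\ref{DH is compatible} shows $M$ is Hausdorff; the $T$-action, symplectic form, and moment map descend from the pieces (which agree on the overlap by construction); and $\Phi$ is proper by Lemma~\ref{lemma:DHfunction}(3) together with properness of $\Phi_0$ on the complement. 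Hence $M$ is a tall complexity one space over $U$ with moment image $\Delta \cap U$, genus $g$, and skeleton $S|_U$, establishing (1).

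\emph{Matching $\rho$ in (2), and main obstacle.} The \DH function of $M$ satisfies
\[
 \rho_M = \rho_{M_0} - k r_0 + \sum_{i=1}^k \rho_i,
\]
where $\rho_i$ is the \DH function of $T_i$ (a \DH function for a truncation of $Y_i$ in the sense of Definition~\ref{DH truncation}) and $k r_0$ is the (constant) contribution of the $k$ removed balls. By compatibility (Definition~\ref{compatible rho} and Remark~\ref{rk on compatible rho}), $\rho - \sum_i \rho_i$ is integral affine near $\alpha$; hence so is $\rho_{M_0}^{\mathrm{target}} := \rho - \sum_i \rho_i + k r_0$. By Example~\ref{associated}, any prescribed positive integral-affine function on $\Delta \cap U$ can be realized as $\rho_{M_0}$ by choosing $c_1(P) \in H^2(\Sigma_g;\Z^n)$ to match its integer slope and rescaling the symplectic form on $\Sigma_g$ to match its constant term. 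By Lemma~\ref{lemma:DHfunction}(1) we have $\rho_i(\alpha) = \kappa_i + r_1$, so choosing the $\kappa_i + r_1$ small makes $\rho_{M_0}^{\mathrm{target}}(\alpha) > 0$, and shrinking $U$ once more ensures positivity throughout. The main technical obstacle is the skeleton bookkeeping --- verifying $M_\exc = S|_U$ exactly. No exceptional orbits are missed because each $T_i$ contains a full neighborhood of $[\lambda,0,0]$ in $(Y_i)_\exc$, and no spurious ones appear because the gluing collars of Lemma~\ref{identify}(2) lie in the non-exceptional stratum of $Y_i$, while $M_0$ has none by construction. The remaining technicalities (Hausdorffness, compatibility of symplectic forms on overlaps, properness of $\Phi$) transfer directly from the proof of Proposition~\ref{DH is compatible}.
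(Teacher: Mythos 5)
Your construction of part~(1) is, in outline, the same as the paper's: excise small discs from a tall base space with no exceptional orbits and glue in truncations $\varphi_i\inv((-\infty,\kappa_i+r_1])$ of the models $Y_i$ along the collar of Lemma~\ref{identify}(2). The paper's base is simply $\Phi_C\inv(U) \times \Sigma$ with a closed symplectic surface $\Sigma$ of genus $g$ (a trivial bundle); you instead propose $P\times_T \Phi_C\inv(U)$ for a possibly nontrivial principal $T$-bundle $P\to\Sigma_g$. For part~(1) this extra generality is unnecessary (take $P$ trivial), and if you do keep it you have to worry about minimal coupling on a noncompact fiber; see below.

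For part~(2), you and the paper diverge genuinely, and your route has a gap. The paper observes that $M_C\times\C$ is itself a tall complexity one model (with non-exceptional central orbit), and that the \DH function of a truncation $\varphi\inv((-\infty,\kappa])$ associated to the complementary circle $J(\lambda)=(\lambda^\zeta,\lambda)$ is exactly $\kappa-\langle\beta-\alpha,\zeta\rangle$. So after part~(1) they simply run the \emph{same} surgery of Lemma~\ref{identify}(2) once more at a non-exceptional point of $M'$, gluing in a truncation of $M_C\times\C$; choosing $\zeta$ and $\kappa$ to match $\rho-\rho'$ (which is integral affine and positive near $\alpha$ by Corollary~\ref{difference is integral affine} and the fact that $\rho'(\alpha)$ can be made arbitrarily small in part~(1)) finishes the proof. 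Your route instead tries to dial in the integral-affine correction by tuning $c_1(P)$ and the area of $\Sigma_g$. The gap is that Example~\ref{associated} constructs the associated-bundle symplectic form for a \emph{compact} toric fiber $N$; you apply it to the noncompact fiber $\Phi_C\inv(U)$ without justification. (This can be patched — e.g.\ embed $C\cap U$ in a compact Delzant polytope agreeing with $C$ near $\alpha$, form the compact associated bundle, and restrict — but you do not say this, and it is precisely what the paper's approach avoids having to say.) There is also a bookkeeping error in your \DH formula $\rho_M=\rho_{M_0}-kr_0+\sum_i\rho_i$: the collar $\{r_0<\pi|z|^2<r_1\}$ is contained both in $M_0\setminus\bigsqcup D_{r_0}$ and in $T_i$, so you double-count it; if $\rho_i$ is the \DH function of the truncation at level $\kappa_i+r_1$, the removed constant should be $kr_1$, not $kr_0$. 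The error is a constant, so it does not affect the integral-affine conclusion, but it does change the value at $\alpha$ you use later.

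One smaller omission: having built $M'$, you still need to shrink $U$ to guarantee $M'_\exc$ is exactly $S|_U$ rather than merely a neighbourhood of $\pi\inv(\alpha)$; the paper does this explicitly using properness of $\pi$ and of $\Phibar|_{M'_\exc}$ (Lemma~\ref{exceptional is closed}), and your ``bookkeeping'' paragraph gestures at it without the properness argument.
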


\begin{proof}[Proof of part (1)]
Let $\alpha$ be a point in $\calT$
and fix an arbitrary positive number $b$.
By Corollary~\ref{discrete},
the level set $\pi\inv(\alpha)$ in $S$ is finite.
We will choose a convex open neighbourhood $U \subset \calT$ of $\alpha$
and construct a complexity one space $(M',\omega',\Phi',U)$
of genus $g$ 
with moment image $\Delta \cap U$
whose skeleton is isomorphic to $S|_U$.
Additionally, the  \DH function $\rho'$ of $M'$ will be smaller
than $b$ near $\alpha$, and will satisfy
\begin{equation} \labell{DH of Mprime}
\rho' = c + \sum_{s \in \pi\inv(\alpha)} \rho_s,
\end{equation}
where $c$ is a positive real number and where
each $\rho_s$ is the \DH function for a truncation of the model $Y_s$ 
associated to $s \in S$.

Choose a closed symplectic 2-manifold $(\Sigma, \eta)$ of genus $g$
and a positive number $\kappa_s$
for each $s \in \pi\inv(\alpha)$ such that
$$\int_\Sigma \eta + \sum_{s \in \pi\inv(\alpha)} \kappa_s < b.$$

Since $\Delta$ is a Delzant subset, there exists
a convex open neighbourhood $U$ of $\alpha$ and a Delzant cone $C$ at $\alpha$
such that $\Delta \cap U = C \cap U$.
Let $(M_C,\omega_C, \Phi_C)$ be a toric manifold
with moment image $C$.

By the Darboux theorem, if $\eps > 0$ is sufficiently small,
for each $x \in \pi\inv(\alpha)$ we can choose an open set $W_{s}$ in $\Sigma$
and a symplectomorphism $\Psi_s \colon D_\eps \to W_{s}$;
let $x_s = \Psi_s(0)$.
Furthermore, we may assume  that the closures 
of the sets $W_{s}$ are disjoint. 
Clearly, for each $s \in \pi\inv(\alpha)$,
there is an isomorphism
between
\begin{gather}
 \Phi_C\inv(U) \times \left( D_\epsilon \smallsetminus \{0\} \right) 
 \ \subset \ M_C \times  \C
\quad \mbox{and}
\\
\labell{open subset}
 \Phi_C \inv(U) \times \left( W_{s} \ssminus \{x_s \} \right)
\ \subset \ M_C \times \Sigma,
\end{gather}
given by $(m,z) \mapsto (m,\Psi_s(z))$.

Let $Y_s = T \times_{H_s} \C^{h_s+1} \times \h_s^0$ be the
tall complexity one model with moment map
$\Phi_s \colon Y_s \to \ft^*$
corresponding to $s \in \pi\inv(\alpha)$.
Let $J_s$ be a complementary circle to $T$ in $T \times_{H_s} (S^1)^{h_s+1}$;
see Remark~\ref{rmk:J}.
Let $\varphi_s$ be a moment map for the resulting circle action, 
normalized by $\varphi_s([\lambda,0,0])=0$.
Since $(S,\pi)$ and $\Delta$ are compatible, $\image \Phi_{Y_s} = C$.
By part (2) of Lemma~\ref{identify}, after possibly shrinking 
$U$ and $\epsilon$,
there exists an isomorphism between 
\begin{gather}
\labell{open subset2}
\Phi_{s} \inv(U) \cap \varphi_s\inv\left((\kappa_s,\kappa_s+\eps)\right)
\ \subset \ Y_s  \quad  \mbox{and} \\
\Phi_C\inv(U) \times  \left( D_\epsilon \smallsetminus \{0\} \right)
\ \subset \ M_C \times \C
\end{gather}
that intertwines $\varphi_s \colon Y_s \to \R$ and the map 
$(m,z) \mapsto \kappa_s + \pi \left|z\right|^2$.

Because the sets~\eqref{open subset} and~\eqref{open subset2}
are both isomorphic to the same set,
there exists an isomorphism between them
that intertwines the map $(m,\Psi_s(z)) \mapsto \kappa_s+ \pi |z|^2$
and the map $\varphi_s$.
We construct $M'$ by gluing together the spaces
\begin{gather} \labell{glue1}
 \Phi_C\inv(U) \times
   \left(\Sigma \ssminus \{ x_s \}_{s \in \pi\inv(\alpha)} \right)
    \ \subset \ M_C \times \Sigma \quad \ \ \mbox{and} \\
\bigsqcup_{s \in \pi\inv(\alpha)}
\Phi_{s}\inv(U) \cap \varphi_s\inv((-\infty,\kappa_s+\epsilon))
\ \subset \ \bigsqcup_{s \in \pi\inv(\alpha)} Y_s 
\end{gather}
by identifying their isomorphic open subsets~\eqref{open subset}
and~\eqref{open subset2} for every $s \in \pi\inv(\alpha)$.

The space $M'$ is the union of two {\em closed} subspaces:
the images in $M'$ of
\begin{gather} \labell{thing1}
  \Phi_C\inv(U) \times \Big( \Sigma \ssminus 
   \bigsqcup_{s \in \pi\inv(\alpha)} \Psi_s(D_{\eps/2}) \Big) 
   \ \subset \ M_C \times \Sigma
\quad \ \ \mbox{and} \\ \labell{thing2}
   \bigsqcup_{s \in \pi\inv(\alpha)} \Phi_{s}\inv(U) \cap 
\varphi_s\inv \left( (-\infty , \kappa_s + \eps/2 ] \right)
   \ \subset \ \bigsqcup_{s \in \pi\inv(\alpha)} Y_s. 
\end{gather}
Because each of these is Hausdorff, $M'$ is Hausdorff.

By construction, $M'$ is a manifold with a $T$ action,
a symplectic form $\omega'$, and a moment map $\Phi'$.
Moreover, as a map to $U$,
the restriction of $\Phi_C$ to \eqref{thing1} is proper,
and 
the restriction of each $\Phi_s$ to \eqref{thing2}
is proper by part (3) of Lemma~\ref{lemma:DHfunction}.
Therefore,  $\Phi'$ is proper as a map to~$U$.

This yields a complexity one space $(M',\omega',\Phi',U)$
with moment image $\Delta \cap U$,
and an isomorphism from $M'_\exc$
onto a neighbourhood of $\pi\inv(\alpha)$ in $S$.
Because $\pi$ is proper, after possibly shrinking $U$ further, 
we may assume that the image of $M'_\exc$ in $S$ is $S|_U$.
Since it is straightforward to check that $M'$ has genus $g$,
it remains to show that the \DH function $\rho'$ of $M'$
is smaller than $b$ near $\alpha$.

$M'$ can be written as the disjoint union of 
\eqref{glue1} and
$$
\bigsqcup_{s \in \pi\inv(\alpha)}
\Phi_{s}\inv(U) \cap \varphi_s\inv((-\infty,\kappa_s])
\ \subset \ \bigsqcup_{s \in \pi\inv(\alpha)} Y_s. $$
Clearly, the function that takes the constant value $c = \int_\Sigma \eta$  
on $C \cap U$ is a \DH function for the set \eqref{glue1}.
Moreover, by Lemma~\ref{lemma:DHfunction}, after possibly shrinking $U$,
for each $s \in \pi\inv(\alpha)$,
the function $\rho_s \colon \Delta \cap U \to \R$
given by
$\rho_s(\alpha) = \kappa_s - \langle \sigma(\alpha),j \rangle$
is a \DH function 
for the restriction of $\Phi_s$ to 
$\Phi_s^{-1}(U) \cap \varphi_s^{-1}((-\infty,\kappa_s]).$
In particular, it is a \DH function for a truncation of the model $Y_s$.
Thus, the \DH function $\rho'$ for $M'$ satisfies \eqref{DH of Mprime}.

Since $\rho_s(\alpha) = \kappa_s - \langle \sigma(\alpha),j \rangle = 
\kappa_s$, we
have chosen the positive numbers $\eta$ and $\kappa_s$ 
so that $\rho'$ is smaller than $b$ near $\alpha$.
\end{proof}

\begin{proof}[Proof of part (2)]
By part (1) there exists a complexity one space
$(M',\omega',\Phi',U)$ of genus $g$ with moment image $\Delta \cap U$
whose skeleton is isomorphic to $S|_U$ such that the \DH function $\rho'$ 
of $M'$ satisfies $\rho'(\alpha) < \rho(\alpha)$ and \eqref{DH of Mprime}.
Thus, since  $\rho$ is compatible with  $(S,\pi)$, 
Corollary~\ref{difference is integral affine}
implies that
$\rho - \rho'$ is integral affine on a neighbourhood of $\alpha$ in $\Delta$.
(Alternatively, this follows from Proposition~\ref{DH is compatible}.)
Thus, 
since $\rho' < \rho$ near $\alpha$,
there exists
 $\kappa > 0$ and $\zeta \in \ell \subset \t$ such that 
$\rho(\beta) - \rho'(\beta) = \kappa - \left< \beta-\alpha , \zeta \right>$.

The tall complexity one Hamiltonian $T$-manifold $Y = M_C \times \C$
with moment map $\Phi_Y(m,z) = \Phi_C(m)$ is isomorphic
to the tall complexity one model
$T \times_H (\C^k \times \C) \times \h^0$,
where $T \cong (S^1)^n$, where $H \cong \{ 1 \}^{n-k} \times (S^1)^k$,
and where $\h^0 \cong \R^{n-k}$.
So we can apply Lemma~\ref{lemma:DHfunction} to it.
We identify the corresponding torus $G = T \times_H (S^1)^{k+1}$ 
with $T \times S^1$,  and we identify
$[\lambda,0,0]$ with the point $(m_0,0)$ such that $\Phi_C(m_0)=\alpha$.

Define $J \colon S^1 \to T \times S^1$ 
by $J(\lambda) = (\lambda^\zeta , \lambda)$.
Let $\varphi \colon Y \to \R$ be the moment map for the resulting  circle
action, 
normalized by $\varphi(m_0,0)=0$.
The $T \times S^1$ moment map $\tPhi_Y \colon Y \to \ft^* \times \R$
given by 
$\tPhi_Y(m,z) = (\Phi_C(m), \pi |z|^2 - \left< \alpha , \zeta \right> )$
is normalized as in Lemma \ref{sigma}.
Its moment image is 
$\tPhi_Y(Y) = C \times \left[-\left< \alpha , \zeta \right> , \infty\right) $.
Hence, the map $\sigma \colon C \to \t^* \times \R$ 
described in Lemma~\ref{sigma}
is $\sigma(\beta) = (\beta, - \left< \alpha,\zeta \right> )$.
Therefore, by Lemma~\ref{lemma:DHfunction}, the restriction of $\Phi_Y$ 
to $\varphi\inv((-\infty,\kappa])$
is proper and the \DH function for this restriction is
$\rho_{J,\kappa}(\beta) = \kappa - \left<  \beta - \alpha, \zeta \right>$.

By part (2) of Lemma~\ref{identify}, there exists a $T$ equivariant
symplectomorphism from
$\Phi_Y\inv(U) \cap \varphi\inv((\kappa,\kappa+\eps))$
to $\Phi_C\inv(U) \times \left( D_\eps \ssminus \{ 0 \} \right)$
that carries the map $\varphi$ to the map
$(m,z) \mapsto \kappa + \pi |z|^2$.
If we glue this local model into $M'$ following the
same procedure as explained in the above proof of part (1), 
we get a new complexity one space, which satisfies all our requirements.
\end{proof}

\section{Proof of the existence theorems}
\labell{sec:proof}

We are now ready to prove the existence theorems
that we stated in Section~\ref{sec:intro}.

\begin{proof}[Proof of Theorem \ref{existence}]
By part (2) of Proposition \ref{local existence} for 
each point in
$\calT$ there exists a convex open subset $U \subset \calT$
containing the point and there exists
a complexity one space 
of genus $g:=\text{genus}(\Sigma)$ over $U$
whose moment image is $\Delta \cap U$,
whose skeleton is isomorphic to $S|_U$,
and whose \DH function is $\rho|_U$.
The result now follows from Proposition~\ref{combined}.
\end{proof}

\begin{proof}[Proof of Theorem \ref{data from manifold}]
This theorem is an immediate consequence of
Lemma~\ref{compatible},
Proposition \ref{DH is compatible}, and Theorem~\ref{existence}.
\end{proof}

\begin{proof}[Proof of Theorem \ref{thm:existence}]
This theorem is an immediate consequence of 
Lemma~\ref{DH existence} 
below
and Theorem~\ref{existence}.
\end{proof}

\begin{Lemma} \labell{DH existence}
Let $\calT \subset \t^*$ be an open subset.
Let $\Delta \subset \calT$ be a convex Delzant subset.
Let $(S,\pi)$ be a skeleton over $\calT$. 
If $\Delta$ and $(S,\pi)$ are compatible,
then there exists a function $\rho \colon \Delta \to \R_{>0}$
that is compatible with $(S,\pi)$.
\end{Lemma}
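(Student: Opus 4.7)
The plan is to construct $\rho$ by assembling local Duistermaat-Heckman functions via a \v{C}ech cohomology argument and then adjusting the result for positivity. By Proposition~\ref{local existence}(1), for every $\alpha \in \calT$ there is a convex open neighbourhood $U_\alpha \subset \calT$ carrying a tall complexity one space with moment image $\Delta \cap U_\alpha$ and skeleton isomorphic to $S|_{U_\alpha}$; by Proposition~\ref{DH is compatible} its \DH function $\rho_\alpha \colon \Delta \cap U_\alpha \to \R_{>0}$ is compatible with $(S,\pi)$. I would fix a countable locally finite cover $\{U_i\}$ of $\calT$ by such convex open sets, arranged so that every pairwise intersection $U_i \cap U_j$ is again convex, and write $\rho_i$ for the corresponding local positive compatible \DH functions.

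On every non-empty $U_i \cap U_j \cap \Delta$, Remark~\ref{rk on compatible rho} implies that the difference $c_{ij} := \rho_i - \rho_j$ is locally integral affine; since the overlap is convex (hence connected) and full-dimensional, $c_{ij}$ is the restriction of a uniquely determined integral affine function on $\t^*$. The collection $\{c_{ij}\}$ is thus a \v{C}ech $1$-cocycle valued in the sheaf $\cA$ of (restrictions to $\Delta$ of) integral affine functions on $\t^*$. This sheaf is locally constant with stalk $\ell \oplus \R$, and since $\Delta$ is simply connected it is a constant sheaf; contractibility of $\Delta$ then gives $\check{H}^1(\Delta, \cA) = 0$. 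Hence $c_{ij} = b_i - b_j$ for integral affine functions $b_i$ on $U_i \cap \Delta$, and the formula $\rho := \rho_i - b_i$ on $U_i \cap \Delta$ glues to a well-defined continuous function on $\Delta$. It is compatible with $(S,\pi)$ at every point: near any $\alpha \in U_i$, it differs from $\rho_i$ by the integral affine $b_i$, and $\rho_i$ is compatible there, so Remark~\ref{rk on compatible rho} applies.

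The principal obstacle is positivity. The ambiguity in $\{b_i\}$ is exactly a global integral affine function, so the resulting $\rho$ is unique up to adding such a function (in particular, up to a constant). For compact $\Delta$ the constructed $\rho$ is bounded below by continuity, so a large constant correction suffices. For the general (possibly non-compact) case, I would instead execute the construction inductively along an exhaustion $V_1 \Subset V_2 \Subset \cdots$ of $\calT$ by relatively compact convex open sets: assuming inductively that $\rho^{(n)} > 0$ on $\Delta \cap V_n$, extend the integral affine cochain $\{b_i\}$ from a sub-cover of $V_n$ to a sub-cover of $V_{n+1}$ using the fact that integral affine functions on an open set of non-empty interior extend uniquely to $\t^*$, and exploit the freedom in Proposition~\ref{local existence}(1) to choose the new local DH functions $\rho_i$ over $V_{n+1} \smallsetminus V_n$ with arbitrarily large truncation parameters $c$ and $\kappa_s$, ensuring that $\rho^{(n+1)} = \rho_i - \tilde b_i$ remains strictly positive on the newly covered region while agreeing with $\rho^{(n)}$ on $V_n$. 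This inductive positivity step is the hard part of the argument; the \v{C}ech vanishing and compatibility are formal consequences of the local existence and uniqueness results already established.
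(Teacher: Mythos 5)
Your main construction is the same as the paper's: use Proposition~\ref{local existence}(1) and Proposition~\ref{DH is compatible} to produce local positive compatible \DH functions $\rho_U$ on a convex cover, observe via Corollary~\ref{difference is integral affine} that $\rho_U - \rho_V$ is a $\check{}$\v{C}ech $1$-cocycle valued in the locally constant sheaf $\cA$ of integral affine functions (stalk $\ell \oplus \R$), use $\check{H}^1(\Delta,\cA)=0$ since $\Delta$ is convex to split the cocycle as $h_U - h_V$, and set $\rho|_U = \rho_U - h_U$. That part matches the paper's proof exactly.

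Where you diverge is in worrying explicitly about positivity, which the paper's proof does not address (it only produces $\rho\colon \Delta \to \R$, not $\R_{>0}$); you are right that this is a genuine point that deserves comment. However, your inductive fix for the non-compact case does not work as written. The key observation, which you yourself make implicitly, is that any two compatible functions on a connected convex $\Delta$ differ by a \emph{global} integral affine function. Since $\Delta$ is full-dimensional (Definition~\ref{def:delzant} forces the Delzant cones to be $n$-dimensional), an integral affine function vanishing on a nonempty open set is identically zero; hence once $\rho^{(n)}$ is fixed on $\Delta \cap V_n$, any compatible extension to $\Delta \cap V_{n+1}$ is \emph{uniquely determined} -- there is no remaining freedom. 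In particular ``choosing larger truncation parameters'' on the new charts has no effect: if you shift $\rho_i \mapsto \rho_i + c_i$, the resolving cochain shifts by $h_i \mapsto h_i + c_i$ as forced by $h_i - h_j = \rho_i - \rho_j$, and $\rho_i - h_i$ is unchanged. So your exhaustion step cannot repair positivity; the only degree of freedom in the entire construction is the single initial choice of $h_{U}$ on one chart, equivalently adding one global integral affine function to $\rho$. For compact $\Delta$ a constant shift suffices and your argument (and the paper's, implicitly) is fine; for non-compact $\Delta$ the positivity question is whether $\rho$ is bounded below after adding some $\langle \cdot, A\rangle + B$ with $A\in\ell$, and that requires an argument you have not given (and the paper also does not give).

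Separately, a small technical point: you should not assume that the locally finite convex cover can be refined so that \emph{all} pairwise intersections are again covered by single charts or are ``full-dimensional''; the paper avoids this by working with arbitrary convex overlaps, where ``locally integral affine'' plus connectedness of each component of $U\cap V$ is enough to get a well-defined $h_{UV}$ as a locally constant $\ell\oplus\R$-valued function. Your statement that $c_{ij}$ is ``the restriction of a uniquely determined integral affine function on $\t^*$'' is correct only component-by-component; the paper's phrasing via locally constant functions is the cleaner bookkeeping.
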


\begin{proof}
By part 1 of Proposition \ref{local existence}, there exists a cover
$\fU$ of $\calT$ by convex sets and, for each $U \in \fU$, a complexity
one space $(M_U,\omega_U,\Phi_U,U)$ whose moment image
is $U \cap \Delta$ and whose skeleton is $S|_U$.
Let $$\rho_U \colon U \cap \Delta  \to \R$$
be its \DH function.
By Proposition \ref{DH is compatible}, $\rho_U$ is 
compatible with the moment image $\Delta \cap U$ and the skeleton
$(S|_U,\pi_U)$.
Hence, by Definition \ref{compatible rho}, 
and Corollary~\ref{difference is integral affine},
on every intersection
the difference
$$ \rho_U|_{U \cap V} - \rho_V|_{U \cap V} $$
is locally an integral affine function.
Hence this difference is given by a locally constant function
$$ h_{U V} \colon U \cap V \to \ell \oplus \R .$$
Let $\cA$ denote the sheaf of locally constant functions
to $\ell \oplus \R$.
Because $\Delta$ is convex, the \v Cech cohomology
$H^1(\Delta,\cA)$ is trivial.  Hence, after possibly
passing to a refinement of the cover, there exist locally constant
functions 
\begin{equation} \labell{hU}
h_U \colon U \to \ell \oplus \R
\end{equation}
such that $h_U - h_V = h_{UV}$ on $U \cap V$.
Therefore, we can define
$\rho \colon \Delta \to \R$
by $\rho|_U = \rho_U - h_U$ for all $U \in \fU$,
where $h_U$ also denotes
the integral affine function given by \eqref{hU}.
\end{proof}


\end{document}